\newtheorem{lemma}{Lemma}
\newtheorem{proposition}[lemma]{Proposition}
\newtheorem{theorem}[lemma]{Theorem}
\numberwithin{lemma}{section}
\newtheorem{corollary}[lemma]{Corollary}
\theoremstyle{definition}
\newtheorem{definition}[lemma]{Definition}
\theoremstyle{definition}
\newtheorem{remark}[lemma]{Remark}
\theoremstyle{definition}
\theoremstyle{definition}
\newtheorem{example}[lemma]{Example}
\theoremstyle{theorem}
\theoremstyle{theorem}
\theoremstyle{theorem}
\author{Alexander Bastounis}
\address{University of Leicester,
		UK}
\email{ajb177@leicester.ac.uk}
\author{Felipe Cucker}
\address{City University of Hong Kong, 
		HONG KONG}
\email{macucker@gmail.com}
\author{Anders C. Hansen}
\address{University of Cambridge, UK}
\email{ach70@cam.ac.uk}
\numberwithin{equation}{section}
\newcolumntype{Y}{>{\centering\arraybackslash}X}
\newcommand{\triple}[1]{[\![(#1)]\!]}
\newcommand{\trip}[1]{[\![#1]\!]}
\theoremstyle{definition}
\newcommand*\colvec[1]{
	\global\colveccount#1
	\begin{pmatrix}
		\colvecnext
	}
	\def\colvecnext#1{
		#1
		\global\advance\colveccount-1
		\ifnum\colveccount>0
		\\
		\expandafter\colvecnext
		\else
	\end{pmatrix}
	\fi
}
\newcounter{framedeqn}
\let\c@equation\c@framedeqn}
\newcommand{\real}{\mathbb{R}}
\newcommand{\nat}{\mathbb{N}}
\newcommand{\indic}{\mathbbm{1}}
\newcommand{\sg}{\mathsf{sgn}}
\newcommand{\argmin}{\mathop\mathrm{argmin}}
\newcommand{\supp}{\mathsf{supp}}
\newcommand{\cond}[1]{\mathrm{cond}(#1)}
\newcommand{\rational}{\mathbb{Q}}
\newcommand{\xul}{\xi_{\mathrm{UL}}}
\newcommand{\condfsul}{\mathscr{C}_{\mathrm{UL}}}
\newcommand{\Sigmul}{\mathsf{\Sigma}_{\mathrm{UL}}}
\newcommand{\stabsupp}{\mathsf{stsp}}
\newcommand{\Sol}{\mathsf{Sol}}
\newcommand{\mUL}{\mathsf{Sol}^{\mathsf{UL}}}
\newcommand{\rmd}{\,\mathrm{d}}
\newcommand{\bfc}{\mathbf{c}}
\renewcommand{\bar}{\overline}
\newcommand{\transp}{^{\mathrm{T}}}
\newcommand{\Id}{\/\mathrm{I}}
\newcommand{\eproof}{\hfill\qed}
\newcommand{\probab}{\mathbb{P}}
\def\Oh{\mathcal O}
\def\mcN{\mathcal N}
\newcommand{\nMatSing}[2]{\|#1\|_{#2}}
\newcommand{\nmax}[1]{\|{#1}\|_{\max}}
\newcommand{\nTwoTwo}[1]{\nMatSing{#1}{2}}
\newcommand{\nTrMax}[1]{\triple{#1}_{\max}}
\newcommand{\nTrTwo}[1]{\triple{#1}_2}
\newcommand{\nTrOneStar}[1]{\triple{#1}_{\mathrm{S}}}
\def\Oh{\mathcal O}
\def\mcN{\mathcal N}
\def\mcU{\mathcal U}
\def\sg{\mathop{\mathsf{sgn}}}
\renewcommand{\xul}{\stabsupp}
\def\bfC{\mathbf{C}}
\def\bfS{\mathbf{S}}
\title[On the effects of random data on condition in statistics and optimisation]{When can you trust feature selection? -- II: On the effects of random data on condition in statistics and optimisation}
\newcommand{\mULMS}{\Xi^{\text{ms}}}
\newcommand{\rme}{\mathrm{e}}
\DeclareMathOperator{\sech}{sech}
\DeclareMathOperator{\erf}{erf}
\begin{document}
	\maketitle		

\begin{abstract}
In Part I, we defined a LASSO condition number and developed an algorithm -- for computing support sets (feature selection) of the LASSO minimisation problem -- that runs in polynomial time in the number of variables and the logarithm of the condition number.  The algorithm is trustworthy in the sense that if the condition number is infinite, the algorithm will run forever and never produce an incorrect output. In this Part II article, we demonstrate how finite precision algorithms (for example algorithms running floating point arithmetic) will fail on open sets when the condition number is large -- but still finite. This augments Part I's result: If an algorithm takes inputs from an open set that includes at least one point with an infinite condition number, it fails to compute the correct support set for all inputs within that set. Hence, for any finite precision algorithm working on open sets for the LASSO problem with random inputs, our LASSO condition number -- as a random variable -- will estimate the probability of success/failure of the algorithm. We show that a finite precision version of our algorithm works on traditional Gaussian data for LASSO with high probability. The algorithm is trustworthy, specifically, in the random cases where the algorithm fails, it will not produce an output. Finally, we demonstrate classical random ensembles for which the condition number will be large with high probability, and hence where any finite precision algorithm on open sets will fail.  We show numerically how commercial software fails on these cases.  
\end{abstract}

\section{Introduction}\label{sec:intro}

This article is the continuation of~\cite{BCH1}, which in the sequel we 
will refer to as Part~I. Both here and in Part~I, the \emph{unconstrained LASSO feature selection problem}
\cite{LassoStart, SLSBook} is the main focus. Specifically, we are interested in computing, for fixed $\lambda\in \rational$, $\lambda > 0$, an element in 
\begin{equation}\label{eq:LassoComp}
   \Xi(y,A) = \lbrace \supp(x) \, \vert \,
   x \in \argmin_{\hat{x} \in \real^{N}}\|A\hat{x}-y\|^2_2 
   + \lambda \|\hat{x}\|_1\rbrace,
\end{equation}
where $(y,A) \in\real^m\times \real^{m\times N}$. The rationale is as follows: given the many AI-based
algorithms
in the computational sciences, with the potential for hallucinations and
non-robustness \cite{antun2020instabilities, jin17,
mccann2017convolutional, Choi,DezFaFr-17, hammernik2018learning, heaven2019deep, Anders2, Choi, Choi2}, the question of
trustworthiness of algorithms is now becoming a crucial topic. 
For example, the European
Commission~\cite{EU_Commission_2020} has been particularly vocal about
its demand for trust in algorithms.
However, with this new focus on
trust in algorithms comes an important question: Which of the
classical (non-AI-based) approaches are trustworthy, such as LASSO feature selection?  

Part I first defines a LASSO condition number $\condfsul(b,U)$ (see Definition \ref{def:cond}) for any pair $(b,U)\in\real^m\times \real^{m\times N}$, and then provides the following Theorem (Theorem~1.2
there\footnote{In all what follows, for simplicity, we will use a prefix
	`I.' in the references to objects in Part~I not contained here. Thus, for instance,
	Theorem~1.2 or equation~(7.19) there become Theorem~I.1.2 and
	equation~(I.7.19) here.}) below. The model of computation is that any algorithm reads variable-precision 
approximations of the input $(b,U)\in\real^m\times \real^{m\times N}$. Most importantly, the set of variable precision algorithms contains the set of finite precision algorithms, which is a typical way of modelling algorithms using floating point arithmetic.

\begin{remark}[Model of computation -- Inexact input]\label{rem:1}
In practice, when trying to compute an element of $\Xi(y,A)$
in~\eqref{eq:LassoComp}, we must assume that the $A$ and $y$ are given
inexactly. This is because either we have: (1) an irrational input; or
(2) the input is rational (for example $1/3$), but our computer
expresses numbers in a certain base (typically base-2); (3) the
computer uses floating-point arithmetic for which -- in many cases --
the common backward-error analysis (popularized by
Wilkinson~\cite{Wilkinson63}) translates the accumulation of round-off
in a computation into a single-perturbation of the input data. Hence, we assume that algorithms access the input to
whatever finite precision desired and that all computational operations
are done exactly.
\end{remark}

\begin{theorem}[Main theorem of Part I]\label{thm:fsulcomp}
	Consider the condition number $\condfsul(b,U)$ defined
	in~\eqref{eq:CondDefinition}.
	\begin{itemize}
		\item[(1)]  We exhibit an algorithm $\Gamma$ which, for any input pair 
		$(b,U)\in\real^m\times\real^{m\times N}$, reads variable-precision 
		approximations of $(b,U)$. If $\condfsul(b,U) < \infty$
		then the algorithm halts and returns a correct value in $\Xi(b,U)$.
		The cost of
		this computation is
		\begin{equation*}
			\Oh\left\{N^{3}\left[\log_2\left(N^2 \triple{b,U}_{\max}^2
			\condfsul(b,U)\right)\right]^2\right\}.
		\end{equation*}
		\label{item:fsulcomppos}
		If, instead, $\condfsul(b,U)=\infty$ then the algorithm runs forever.
		
		\item[(2)]
		The condition number $\condfsul(b,U)$ can be estimated in the
		following sense: There exists an algorithm that provides an upper bound
		on $\condfsul(b,U)$, when it is finite, and runs forever when
		$\condfsul(b,U) = \infty$.
		
		\item[(3)]
		If $\Omega\subseteq \real^m\times\real^{m\times N}$ is an open set and
		there is a $(b,U)\in\Omega$ with $\condfsul(b,U) = \infty$ then there
		is no algorithm that, for all input $(y,A)\in\Omega$, computes an element
		of $\Xi(y,A)$ given approximations to $(y,A)\in \Omega$.  Moreover, for any
		randomised algorithm $\Gamma^{\mathrm{ran}}$ that always halts and any $p>1/2$, there exists
		$(y,A)\in\Omega$ and an approximate representation $(\tilde{y},\tilde{A})$ (for a precise statement see \S 9 in Part I \cite{BCH1})
		of $(y,A)$ so that $\Gamma^{\mathrm{ran}}(\tilde{y},\tilde{A}) \notin \Xi(y,A)$ with probability
		at least $p$.
		
		 If $(b,U)\in\Omega$ is computable, then the failure point $(y,A)\in\Omega$ above can be made computable.  
	\end{itemize}
	\label{item:fsulcompneg}
\end{theorem}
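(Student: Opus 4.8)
The plan is to regard $\condfsul(b,U)$ as a normalised reciprocal of the distance from $(b,U)$ to the set $\Sigmul$ of \emph{ill-posed} inputs — those at which the support map $(y,A)\mapsto\Xi(y,A)$ is not locally constant — and to prove the three parts almost independently. Parts (1) and (2) are algorithmic and rest on a single quantitative stability estimate linking the condition number to the precision at which the support set becomes certifiable; part (3) is the now-standard impossibility argument that near a point of infinite condition there sit genuinely different admissible outputs that no finite-precision reader can separate.

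For part (1), I would first fix a characterisation of the LASSO support set through the subdifferential (KKT) optimality conditions: a set $S\subseteq\{1,\dots,N\}$ is the support of a minimiser exactly when the reduced normal equations on $S$ admit a solution with the prescribed strict sign pattern while the dual-feasibility inequalities on $S^{c}$ hold. This yields a procedure which, at working precision $2^{-k}$, solves $\Oh(1)$ reduced least-squares systems — each $\Oh(N^{3})$ arithmetic operations — and checks the resulting inequalities; enumeration of all $S$ is avoided by growing and shrinking the candidate support in the manner of homotopy methods. The crucial quantitative input is that, when $\condfsul(b,U)<\infty$, the optimal $S$ is unique and each defining inequality holds with a margin bounded below by a quantity of order $1/(N^{2}\,\triple{b,U}_{\max}^{2}\,\condfsul(b,U))$, together with a Lipschitz bound for the reduced solutions under input perturbation. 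Combining the two, at precision $k=\Oh(\log_{2}(N^{2}\,\triple{b,U}_{\max}^{2}\,\condfsul(b,U)))$ the algorithm both certifies and outputs the correct $S$; since exact rational arithmetic on $\Oh(k)$-bit numbers costs $\Oh(k^{2})$ per operation, the total cost is $\Oh(N^{3}k^{2})$, matching the statement. When $\condfsul(b,U)=\infty$ no margin ever materialises, certification never succeeds, and the algorithm loops — which is precisely the trustworthiness property. For part (2) I would reuse this machinery: one searches over radii $\delta=2^{-k}$ for a ball $B((b,U),\delta)$ on which the support map is provably constant (using the stability estimate just described); any such $\delta$ is a certified lower bound on $\mathrm{dist}((b,U),\Sigmul)=1/\condfsul(b,U)$, hence $1/\delta$ is a certified upper bound on $\condfsul(b,U)$, and one keeps refining; if $\condfsul(b,U)=\infty$ no such ball exists and the procedure runs forever, as required.

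For part (3), let $\Omega$ be open with some $(b,U)\in\Omega$ satisfying $\condfsul(b,U)=\infty$. The first step is to exhibit, arbitrarily close to $(b,U)$ and hence — by openness — inside $\Omega$, two inputs $(y,A)$ and $(y',A')$ with $\Xi(y,A)\cap\Xi(y',A')=\emptyset$: an infinite-condition point is (a limit of points) at which two distinct support sets are simultaneously optimal with opposite perturbation sensitivities, and a small explicit perturbation resolves the tie in the two possible ways while staying in $\Omega$. Then any deterministic algorithm reading only approximations to its input can be fed a single approximation $(\tilde y,\tilde A)$ consistent with both inputs at every precision it might query — this is a diagonal/compactness step over the finitely many digits the algorithm actually reads — so, the two admissible answers being disjoint, it returns a wrong value on one of them. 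For a randomised, always-halting $\gprob$ and any $p>1/2$: since $\gprob$ halts almost surely it reads finitely many digits a.s., so by a stopping-time/tightness argument there is, for each of the two branches, a common finite approximation hit with probability arbitrarily close to $1$; conditioning on it freezes the output law, which can meet at most one of the two disjoint admissible sets with probability $\ge 1/2$, and by choosing the branch and shrinking the approximation the failure probability is pushed to any prescribed $p>1/2$. Finally, if $(b,U)$ is computable, the nearby witnesses $(y,A)$ are solutions of the semialgebraic system cutting out $\Sigmul$ near a computable point and can thus be produced by a computable search, so the failure point is computable.

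I expect the main obstacle to be the stability estimate underlying part (1), with the \emph{precise} polynomial factors $N^{2}\,\triple{b,U}_{\max}^{2}$: one must control simultaneously how small the optimality margins of the true support can be, how badly conditioned the reduced normal equations can become, and how both degrade under input perturbation — all uniformly and all expressed through the single scalar $\condfsul(b,U)$, which in effect forces the definition of the condition number to be reverse-engineered so that this bookkeeping closes. A secondary delicate point is obtaining \emph{every} $p>1/2$ in part (3) rather than merely $p=1/2$, which is why the stopping-time/tightness step is needed: one cannot assume a uniform bound on the algorithm's reading depth.
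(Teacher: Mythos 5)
First, a structural point: this theorem is quoted verbatim from Part~I (\cite{BCH1}) and the present paper deliberately contains no proof of it --- Section~5 states that results imported from Part~I ``are provided without proofs''. So there is no in-paper proof to compare against; I can only judge your sketch against the machinery the paper does expose (the margins $\sigma_1,\sigma_2,\sigma_3$ of Definition~\ref{def:sigma1sigma2sigma3}, the polynomial $q$ and Proposition~\ref{proposition:rhofssigmalb}, and the adversarial construction in Lemma~\ref{lem:existsOpenFail}). Your overall architecture is consistent with that machinery: the KKT-margin characterisation in your part~(1) is exactly what $\sigma_1,\sigma_2,\sigma_3$ encode, and your part~(3) is the standard ``two inputs indistinguishable at every queried precision'' argument, whose key geometric ingredient (two nearby inputs in $\Omega$ with disjoint, locally stable supports) is what Lemma~\ref{lem:existsOpenFail} actually delivers via Lemmas~I.8.2 and~I.8.8.

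There are, however, two concrete gaps. (a) In part~(1) the entire quantitative content --- that every KKT margin of the true support is bounded below by a quantity of order $(N^{2}\triple{b,U}_{\max}^{2}\condfsul(b,U))^{-1}$, and conversely --- is asserted rather than proved; you concede it must be ``reverse-engineered''. But $\condfsul$ is defined as the reciprocal of the \emph{distance to support change}, not as a margin, and passing between the two is precisely the hard two-sided estimate (Proposition~\ref{proposition:rhofssigmalb} is one direction and already requires the nontrivial polynomial $q$). Likewise, your claim that $\Oh(1)$ reduced least-squares solves suffice ``by homotopy'' is unsupported: LASSO homotopy paths are not $\Oh(1)$ (or even polynomially) long in the worst case, so the $\Oh(N^{3}k^{2})$ cost does not follow from your description. (b) In part~(3), your randomised argument does not reach the stated conclusion. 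Conditioning on the event $E$ that the algorithm reads only the common prefix (with $\probab(E)\ge 1-\varepsilon$ by your tightness step), the frozen output law assigns mass at most $1/2$ to one of the two disjoint correct answers, which yields failure probability at least $\tfrac12-\varepsilon$ --- strictly \emph{below} $1/2$, whereas the theorem demands failure probability at least $p$ for \emph{every} $p>1/2$. With only two indistinguishable inputs this barrier is intrinsic; the actual proof must exploit the precise notion of ``approximate representation'' from \S 9 of Part~I (which the theorem explicitly defers to), and your sketch does not supply the missing mechanism.
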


\begin{remark}[Trustworthiness of algorithms -- No wrong outputs]\label{rem:trust}
By `trustworthy algorithm' for a computational problem, we mean the following. If the computational problem takes only discrete values (as is the case when computing support sets of minimisers of optimisation problems), a trustworthy algorithm will always produce a correct answer -- if it halts. 
\end{remark}

Note that (1) implies -- in view of the question on trustworthiness -- that our algorithm will never output a wrong solution, and thus if it halts, the output is always trustworthy. However, there are inputs for which it will not produce an answer. In view of (3) this is optimal in terms of existence of algorithms on open sets: Every algorithm will fail on some inputs, although not necessarily on inputs $(b,U)$ where $\condfsul(b,U) = \infty$, which is where our algorithm fails. 

\subsection{The LASSO problem with random inputs}

To motivate our main results in this paper -- Part II -- we begin by asking the basic question:
\vspace{1mm}
\begin{displayquote}
\normalsize
{\it Can commercial software for the LASSO problem be trusted on random data? If not, why? Is there a link to our LASSO condition number? And how can a lack of trust be mitigated?}
\end{displayquote}
\vspace{1mm}
To illustrate the motivation behind this question we consider the following example, using a variety of different probability distributions on the input data. 

\begin{figure}
	\begin{center}
	\begin{tabular}{c c}
		\includegraphics[scale = 0.8]{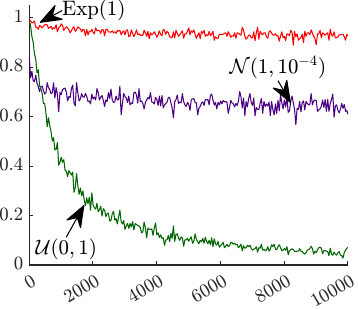} & \includegraphics[scale = 0.8]{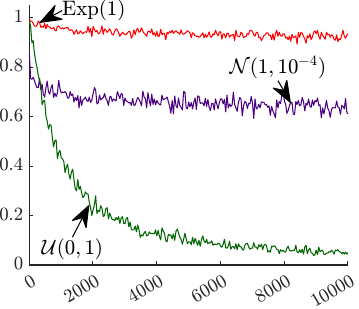} \\
		\!\!\includegraphics[scale =0.8]{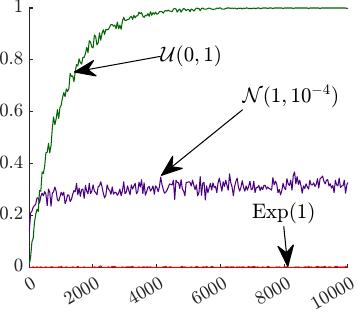} &\includegraphics[scale = 0.8]{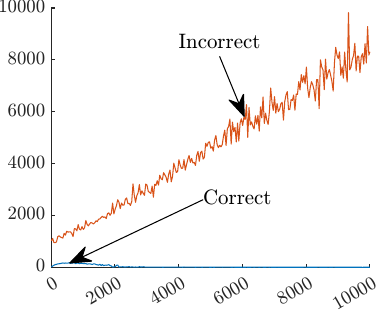}
	\end{tabular} 
	\end{center}
\caption{
\emph{({\bf $\mathbb{P}(\text{LASSO has a unique minimiser}) = 1$ and $\mathbb{P}(\condfsul(b,U) < \infty) = 1$, yet standard algorithms fail to compute the support set})}. Testing MATLAB's {\tt lasso} on random iid inputs $(b,U)\in\real^1\times \real^{1\times N}$ -- according to the distributions $\mathcal{U}(a,b)$ (uniform), $\mathrm{Exp}(\nu)$ (exponential) and $\mathcal{N}(\mu, \sigma^2)$ (normal). The task is to compute the support set of a LASSO minimiser i.e. an element in $\Xi(b,U)$ from \eqref{eq:LassoComp} with $\lambda = 10^{-2}$. All figures: the horizontal axis represents the dimension $N$. Top figures: the vertical axis represents the success rate $\frac{\# \text{ of successes }}{\# \text{ of trials }}$ with threshold value -- see the text accompanying \eqref{eq:TheSolSet} -- set to $10^{-3}$ and $10^{-12}$ for the left and right figures respectively. Bottom left figure: the proportion of trials (threshold = $10^{-12}$) for each dimension which had a condition $\condfsul(b,U)$ above $1,000$, for each distribution. Bottom right figure: the vertical axis is the median condition $\condfsul(b,U)$ (across all distributions and trials) for that dimension for the cases where MATLAB was correct and MATLAB was incorrect (threshold = $10^{-12}$).
Note that for all the distributions considered $\mathbb{P}(\text{LASSO has a unique minimiser}) = 1$ and $\mathbb{P}(\condfsul(b,U) < \infty) = 1$ for all $N$.}
\label{fig:ExampleNDRandomResult}
\end{figure}

\begin{example}[{\bf Testing algorithms for LASSO with random inputs}]\label{Example:NDRandom}
We set $m=1$, $b=1$, and $\lambda = 10^{-2}$ and generate each entry of a matrix $U \in \real^{1 \times N}$ iid from three distributions: the exponential distribution with parameter $1$, the normal distribution with mean $1$ variance $10^{-4}$, and the uniform distribution on $(0,1)$.  This purposefully simplified situation is considered because it is easy to use Lemma~\ref{lemma:1DZeroSupportRequirement}
to compute $\Xi(b,U)$ as defined in \eqref{eq:LassoComp} (this is a singleton with probability~1). We compare this answer (the ground truth) with the following procedure: we use Matlab's lasso routine to attempt to compute an element $x$ in
\begin{equation}\label{eq:TheSolSet}
\mUL(b,U) := \argmin\limits_{\hat{x}\in \real^{N}}
\|U\hat{x}-b\|^2_2 + \lambda \|\hat{x}\|_1.
\end{equation}
and then set any values of $x$ larger in absolute value than a parameter `threshold' to 0 and consider the resulting vector's support.
We do this $500$ times for each choice of $N \in \{10,20,\dotsc,10010\}$
inclusive. In addition, we also compute the condition number $\condfsul(b,U)$ and present the proportion of experiments for each dimension and distribution which had condition above $1,000$ as well as the median condition across all distributions of cases where Matlab fails to compute the correct support set and where Matlab computes the correct support set. The results are presented in Figure~1. 

 We see that
for large $N$, MATLAB is more accurate when data are drawn from an
exponential distribution instead of a normal distribution and
similarly the algorithm is more accurate when data are drawn from a normal
distribution instead of a uniform one. This also correlates with the size of the condition number: large median condition number correlates with low success rate.
\end{example}

\section{Main results} Our main results can be described with three main theorems: Theorem \ref{thm:failure_bounds} demonstrating  a relation between failure of finite precision algorithms and our condition number $\condfsul(b,U)$; Theorem \ref{thm:Prob1DCase}, which shows asymptotic estimates of $\condfsul(b,U)$, and thus helps explain Example \ref{Example:NDRandom}; and Theorem \ref{cor:simple} presenting conditions in classical statistics that allow us to obtain 'good/small' condition numbers $\condfsul(b,U)$ -- and thus effective and trustworthy algorithms for these inputs (we have a stronger, yet more involved version presented as Theorem \ref{thm:NormalCondWainwright} in \S \ref{section:SetupForNormDist}).

\subsection{Condition $\condfsul(b,U)$ and failure of algorithms on random inputs} The failure of MATLAB's {\tt lasso} in 
Example \ref{Example:NDRandom} and Figure \ref{fig:ExampleNDRandomResult} yields the question: Why does the algorithm fail on these basic random LASSO problems? The key issue is that $\condfsul(b,U)$ characterises failures of finite precision algorithms.

\begin{remark}[Finite precision algorithms]\label{rem:finite}
For a precise definition of a finite precision algorithm, see Definition \ref{definition:FPAlgorithm}. However, the concept can be explained simply: A finite precision algorithm with precision $2^{-k}$ (with $k \in \mathbb{N}$) can only read a dyadic approximation of the correct input with error bound $2^{-k}$.\end{remark}

\begin{remark}[Inputs that are represented exactly]\label{rem:Correct}
Given Remark \ref{rem:1}, there are certain dyadic inputs for which a finite precision algorithm will have an exact representation. Hence, we can consider algorithms that are \emph{correct on all inputs that can be represented exactly}. This concept is formally defined in Definition \ref{def;correct}.
\end{remark}

\begin{theorem}\label{thm:failure_bounds}
	Let $\Gamma_1$ and $\Gamma_2$ be finite precision algorithms with precision $2^{-k}$ with an open domain $\Omega \subset \real^m\times\real^{m\times N}$ for the LASSO function $\Xi$ defined in \eqref{eq:LassoComp}. Suppose that $\Gamma_1$ is correct on all inputs that can be represented exactly in $\Omega$ (see Remarks \ref{rem:finite} \& \ref{rem:Correct}) .  Suppose that $\condfsul(b,U) = \alpha$ with $0 < \frac{1}{\alpha} < 2^{-k-1}$ and that for some $r \in (\alpha^{-1},2^{-k-1})$ we have $\mathcal{B}_{\infty}(b,U,r) \subset \Omega$. Then, $\Gamma_1$ fails on a set $F$ such that \begin{itemize}
		\item $\mathcal{B}_{\infty}(b,U,r) \setminus F$ has Lebesgue measure 0,
		\item $\mathcal{B}_{\infty}(b,U,\alpha^{-1}) \in F$.
	\end{itemize}
	Moreover, $\Gamma_2$ either fails on the whole of  $\mathcal{B}_{\infty}(b,U,\alpha^{-1})$ or on another open set $\theta \subset \mathcal{B}_{\infty}(b,U,r) $. 
\end{theorem}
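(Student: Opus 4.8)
The plan is to exploit that, because $1/\alpha<2^{-k-1}$ and $r<2^{-k-1}$, the reading precision $2^{-k}$ is too coarse to resolve the distance $1/\alpha$ from $(b,U)$ to the ill‑conditioned set $\Sigmul$: every $\ell^\infty$‑ball of radius $2^{-k}$ whose centre lies in $\mathcal{B}_{\infty}(b,U,r)$ simultaneously reaches the region around $(b,U)$ where $\Xi\equiv\{S_0\}$ and a region, near an ill‑conditioned point $p^{\ast}$, where $\Xi$ takes a different value. Since dyadic vectors of arbitrary finite bit‑length are dense, each such ball contains an exactly representable input whose true support value is disjoint from that at the centre, and this is what forces the algorithms to be wrong there.

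\textbf{Step 1 (unpacking $\condfsul$).} First I would extract from the definition of the condition number in Part~I that $\condfsul(b,U)=\alpha$ yields: $\mathrm{dist}_{\infty}\big((b,U),\Sigmul\big)=1/\alpha$; $\Sigmul$ is closed and $\Sigmul\cap\mathcal{B}_{\infty}(b,U,r)$ is Lebesgue‑null; and $\Xi$ is locally constant and single‑valued on $\mathcal{B}_{\infty}(b,U,r)\setminus\Sigmul$, with constant value $\{S_0\}$ on $\mathcal{B}_{\infty}(b,U,\alpha^{-1})$ (so that the latter ball contains no ill‑conditioned point). The crucial additional fact I need is that, for points $p^{\ast}\in\Sigmul$ with $\|p^{\ast}-(b,U)\|_{\infty}$ arbitrarily close to $1/\alpha$, there is a nonempty open set $W$ — as close to $p^{\ast}$ as desired and contained in $\mathcal{B}_{\infty}(b,U,r)\setminus\Sigmul$ — on which $\Xi\equiv\{S^{\ast}\}$ for some $S^{\ast}\neq S_0$. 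This last point, that across $\Sigmul$ near $p^{\ast}$ the stable support genuinely differs from $S_0$, is the only place where the fine geometry of $\Sigmul$ from Part~I is really used, and it is the main obstacle; everything afterwards is triangle‑inequality bookkeeping controlled by $1/\alpha<2^{-k-1}$ and $r<2^{-k-1}$.

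\textbf{Step 2 ($\Gamma_1$).} Fix $\jmath\in\mathcal{B}_{\infty}(b,U,r)\setminus\Sigmul$ and write $\Xi(\jmath)=\{S\}$. If $S\neq S_0$ I would pick a dyadic $v$ very near $(b,U)$ with $\Xi(v)=\{S_0\}$; if $S=S_0$ I would pick a dyadic $v\in W$. In either case the bounds $1/\alpha,r<2^{-k-1}$ keep all the relevant $\ell^\infty$‑distances strictly below $2^{-k}$, so $v$ is a legitimate $2^{-k}$‑reading of $\jmath$, it lies in $\mathcal{B}_{\infty}(b,U,r)\subset\Omega$, and — being exactly representable, where $\Gamma_1$ is correct — $\Gamma_1(v)\in\Xi(v)$. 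Since $\Xi(v)\cap\Xi(\jmath)=\emptyset$ by construction, $\Gamma_1(v)\notin\Xi(\jmath)$, so $\jmath\in F$. Hence $F\supseteq\mathcal{B}_{\infty}(b,U,r)\setminus\Sigmul$, which gives both claimed properties at once: $\mathcal{B}_{\infty}(b,U,r)\setminus F\subseteq\Sigmul$ is null, and $\mathcal{B}_{\infty}(b,U,\alpha^{-1})\subseteq\mathcal{B}_{\infty}(b,U,r)\setminus\Sigmul\subseteq F$.

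\textbf{Step 3 ($\Gamma_2$).} Now suppose $\Gamma_2$ does not fail on all of $\mathcal{B}_{\infty}(b,U,\alpha^{-1})$, and pick $\jmath_0$ there on which it is correct. Since $\Xi(\jmath_0)=\{S_0\}$, this says $\Gamma_2(w)=S_0$ for \emph{every} dyadic $w$ with $\|w-\jmath_0\|_{\infty}\le 2^{-k}$. Choosing a dyadic $v^{\ast}$ close enough to a suitable $p^{\ast}$ — here $2/\alpha<2^{-k}$ makes $\|v^{\ast}-\jmath_0\|_{\infty}<2^{-k}$ — I get $\Gamma_2(v^{\ast})=S_0$. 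Setting $\theta:=\{\jmath'\in W:\|\jmath'-v^{\ast}\|_{\infty}<2^{-k}\}$, after shrinking $W$ and $\|v^{\ast}-p^{\ast}\|_{\infty}$ this is a nonempty open subset of $\mathcal{B}_{\infty}(b,U,r)$ on which $\Xi\equiv\{S^{\ast}\}$; for each $\jmath'\in\theta$ the dyadic $v^{\ast}$ is a valid reading with $\Gamma_2(v^{\ast})=S_0\notin\{S^{\ast}\}=\Xi(\jmath')$, so $\Gamma_2$ fails throughout $\theta$. This is the claimed dichotomy — and note that the hypothesis ``correct on exactly representable inputs'' is used only for $\Gamma_1$, which is precisely why for $\Gamma_2$ one side of the dichotomy cannot be excluded.
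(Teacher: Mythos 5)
Your overall architecture matches the paper's: since $\alpha^{-1},r<2^{-k-1}$, a single dyadic point anywhere in $\mathcal{B}_{\infty}(b,U,r)$ is a valid $2^{-k}$-reading of \emph{every} point of that ball; planting such a point as the first $k$ readings, combined with correctness on exactly representable inputs and the fact that a precision-$2^{-k}$ algorithm cannot distinguish inputs whose first $k$ readings agree (the paper's Lemma \ref{lem:FinitePrecisionSameInputOutput}, which you use implicitly), forces $\Gamma_1$ to return the planted point's support rather than the true one; the dichotomy for $\Gamma_2$ is handled the same way. The genuine gap is exactly where you flag ``the main obstacle'': the existence of a nonempty \emph{open} set $W\subset\mathcal{B}_{\infty}(b,U,r)$ on which $\Xi$ is a constant singleton $\{S^{\ast}\}\neq\{S_0\}$. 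You assume this; it is the substantive content of the paper's Lemma \ref{lem:existsOpenFail}. There one starts from a point $(y^1,A^1)$ in the ball with $\Xi(y^1,A^1)\neq\{S_1\}$, and the delicate case is $|\mUL(y^1,A^1)|\geq 2$: the paper perturbs the matrix to $A^1(I-\delta E)$ to isolate a minimiser with a different support, verifies $\sigma_1,\sigma_2,\sigma_3>0$ for the perturbed pair, and invokes Proposition \ref{proposition:rhofssigmalb} to get $\stabsupp>0$ and hence an open ball of constant support. Without some such argument it is not automatic that the alternative support values near an ill-posed point are attained on an open set rather than only on a thin set, and the proof is incomplete.

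A second, smaller issue: your failure set is $\mathcal{B}_{\infty}(b,U,r)\setminus\Sigmul$, so you need $\Sigmul\cap\mathcal{B}_{\infty}(b,U,r)$ to be Lebesgue-null. The paper needs, and cites \cite[Lemma 4]{LASSOUNIQUE} for, only the weaker fact that the set of inputs with non-singleton $\Xi$ is null; its failure set is the larger set $F_1\cup F_2\cup F_3$ of all unique-support points, which may well contain ill-posed points. Your Step 2 argument in fact only uses that $\Xi(\jmath)$ is a singleton, never that $\stabsupp(\jmath)>0$, so you should enlarge your $F$ accordingly instead of asserting the stronger, unjustified null-ness of $\Sigmul$.
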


Theorem \ref{thm:failure_bounds} demonstrates how finite precision algorithms will fail when $\condfsul(b,U)$ is large relative to the precision of the algorithm. This means that if the probability that $\condfsul(b,U)$ is large is high, it is highly likely that the algorithm will fail. 

\subsection{Condition $\condfsul(b,U)$ as a random variable} 

Theorem~\ref{thm:Prob1DCase} provides a theoretical explanation
to the behaviours exhibited in Example \ref{Example:NDRandom} and the
corresponding Figure \ref{fig:ExampleNDRandomResult}.

\begin{theorem}\label{thm:Prob1DCase}
Let $y\in\real$ be fixed and $A \in \real^{1 \times N}$ be random
with i.i.d.~entries.  
\begin{itemize}
\item If the entries of $A$ follow
an exponential distribution with parameter~$1$ then
\[
  \lim_{N\to\infty}\probab\left(\frac{1}{t}< \condfsul(y,A)\right)=
  \begin{cases} 1-\rme^{-2t} &\mbox{for $t<|y|$}\\
		1 &\mbox{for $t\ge |y|$.}
  \end{cases}  
\]
\item If the entries of
$A$ follow the normal
distribution $\mcN(\mu,\sigma^2)$, then \newline
$2\sqrt{2\ln(N)}[\condfsul(y,A)]^{-1}$ converges in
distribution to an exponential random variable with
parameter~$1/\sigma$.
		
\item If the entries of $A$ follow a uniform
distribution $\mcU(0,1)$ on $(0,1)$, then
\newline
$2N[\condfsul(y,A)]^{-1}$
converges in distribution to an exponential random
variable with parameter $1$.
\end{itemize}
\end{theorem}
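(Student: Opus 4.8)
The plan is to reduce each of the three limit laws to an explicit description of $\condfsul(y,A)$ in the scalar case $m=1$, and then apply classical extreme-value asymptotics. The starting point is Lemma~\ref{lemma:1DZeroSupportRequirement}, which (as used in Example~\ref{Example:NDRandom}) characterises the support set $\Xi(y,A)$ when $A\in\real^{1\times N}$: the minimiser is governed by comparing $\lambda$ against the quantities $|2 A_j y|$, i.e. by the largest entry $\max_j |A_j|$ (all entries positive in the exponential and uniform cases). Combining this with Definition~\ref{def:cond}, I expect $\condfsul(y,A)^{-1}$ to be, up to harmless constants, the \emph{distance to the nearest tie/threshold}, which in the $1$D regime should come out to something like $\bigl|\,|y|\cdot 2\max_j|A_j| - \lambda\,|$ or a gap between the top two order statistics — scaled by $\tripleMax{y,A}$-type normalisers. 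The first genuine step, then, is to pin down this exact formula for $\condfsul(y,A)$ in terms of the order statistics of $(A_1,\dots,A_N)$; everything after is a distributional computation.

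Granting that formula, each bullet is a standard maximum-of-iid computation. For the exponential case: $M_N:=\max_j A_j$ satisfies $M_N-\ln N \Rightarrow$ Gumbel, but more relevantly $\probab(M_N \le \ln N + x)\to e^{-e^{-x}}$, and one checks that for fixed $t<|y|$ the event $\{t^{-1} < \condfsul(y,A)\}$ becomes, after substituting the formula, an event of the form $\{M_N \ge (\text{something}) \}$ whose probability converges to $1-e^{-2t}$; the factor $2$ traces back to the $2A_jy$ term in the subgradient condition, and the cutoff at $t=|y|$ is where the threshold inequality becomes vacuous. For the normal case $\mcN(\mu,\sigma^2)$: here $M_N\approx \mu + \sigma\sqrt{2\ln N}$, and the classical refinement gives $\sqrt{2\ln N}\,(M_N - b_N)\Rightarrow$ a Gumbel variable with the right scaling; feeding this through the formula for $\condfsul^{-1}$ produces the claimed convergence of $2\sqrt{2\ln N}\,\condfsul(y,A)^{-1}$ to $\mathrm{Exp}(1/\sigma)$ — note an $\mathrm{Exp}(1/\sigma)$ limit is exactly what one gets from exponentiating a Gumbel with scale $\sigma$. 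For the uniform case: $N(1-M_N)\Rightarrow \mathrm{Exp}(1)$ is elementary (it is the rescaled maximal spacing from the right endpoint), and again substitution into the $\condfsul^{-1}$ formula yields $2N\,\condfsul(y,A)^{-1}\Rightarrow\mathrm{Exp}(1)$, the factor $2$ from $2A_jy$ with $|y|$ absorbed (one should track whether a $|y|$ survives; the statement suggests it is normalised away by $\tripleMax{y,A}$, so I would verify that the normalisation in Definition~\ref{def:cond} cancels $|y|$ here).

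The main obstacle is the first step: extracting the clean closed-form expression for $\condfsul(y,A)$ in the $1$D case directly from Definition~\ref{def:cond}. The condition number there is presumably an infimum over perturbations that change the support set, and one must argue that in dimension $m=1$ this infimum is attained (or well-approximated) by a perturbation that moves a single coordinate of $A$ so as to cross the threshold $\lambda/(2|y|)$ or to create a tie between the two largest $|A_j|$; ruling out cheaper perturbations (e.g. ones that exploit perturbations of $y$, or simultaneous small moves of many coordinates) is the delicate part and will require the structural results of Part~I. A secondary technical point is handling the exceptional events where two order statistics are already tied or an entry equals the threshold exactly; these have probability $0$ under the continuous distributions considered and can be discarded, but they should be acknowledged. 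Once the formula is in hand, uniform integrability / continuous-mapping arguments make the passage to the limit routine, so I would allocate essentially all the care to Step~1 and treat the three extreme-value computations as parallel corollaries.
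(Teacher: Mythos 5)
Your high-level plan --- reduce to order statistics of the entries of $A$ and identify the two competing mechanisms (a tie between the top two entries of $|A|$ versus crossing the threshold $\lambda/(2|y|)$) --- is the same as the paper's. But the step you defer (``pin down this exact formula for $\condfsul(y,A)$'') is precisely the mathematical content of the theorem, and it is left as a gap. The paper never produces a closed form for $\condfsul$; instead it proves a two-sided probabilistic equivalence (Lemma~\ref{lemma:ZAndDeltaConditionRelation}): on the event $Z(\epsilon)$ that no $\epsilon$-perturbation of $(y,A)$ admits $0$ as a LASSO solution, the events $\{\stabsupp(y,A)<\epsilon\}$ and $\{\delta<2\epsilon\}$ have equal probability, where $\delta$ is the gap between the largest and second-largest entries of $|A|$. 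The hard direction (``ruling out cheaper perturbations,'' which you correctly flag as delicate) is handled by exhibiting the extremal perturbation $\tilde A_i=A_i-\epsilon\,\sg(A_i)$, $\tilde A_j=A_j+\epsilon\,\sg(A_j)$ and invoking Lemma~\ref{lemma:1DZeroSupportRequirement}; the complementary event $Z^{\mathsf c}$ is then shown to have vanishing probability under the relevant scalings (Lemma~\ref{lemma:1DZ}), except in the exponential regime $t\ge|y|$, where $Z^{\mathsf c}$ dominates (take $\tilde y=0$) and yields the answer $1$. Your remark that $|y|$ is ``normalised away by $\tripleMax{y,A}$'' is incorrect: Definition~\ref{def:cond} contains no such normaliser; $|y|$ simply never enters because the binding perturbation acts on $A$ alone, and $y$, $\lambda$ only appear through $Z$.

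The distributional computations you sketch are also aimed at the wrong functional in each case. What must be computed is the limit law of the top \emph{spacing} $\delta=|A|_{(N)}-|A|_{(N-1)}$, not of the maximum $M_N$. For the exponential case, $\delta$ is exactly $\mathrm{Exp}(1)$ for every $N$ (memorylessness/R\'enyi representation), which is where $1-\rme^{-2t}$ comes from (the $2$ is from $\delta<2\epsilon$, not from the subgradient term $2A_jy$); it is not an event of the form $\{M_N\ge\cdot\}$ and Gumbel asymptotics of $M_N$ are irrelevant. For the uniform case, $N(1-M_N)$ and $N(|A|_{(N)}-|A|_{(N-1)})$ happen to share the $\mathrm{Exp}(1)$ limit, but only the latter is the relevant variable. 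For the Gaussian case one needs $\sqrt{2\ln N}\,\delta/\sigma\Rightarrow\mathrm{Exp}(1)$, a statement about the top spacing whose proof in the paper (via the order-statistics density \eqref{eq:deltaDensityExp}, integration by parts, and dominated convergence) is the longest computation in the section and does not follow from the Gumbel fluctuation of $M_N$ by a continuous-mapping argument. So while your outline points in the right direction, both the structural step and the probabilistic step would need to be substantially reworked before this becomes a proof.
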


This theorem can help us to explain
Example~\ref{Example:NDRandom}. Indeed, a basic understanding of
Theorem~\ref{thm:Prob1DCase} is that, with high probability and for
large $N$, the condition number (as a function of $N$) stays roughly
constant when $A$ is exponentially distributed,
grows like $\sqrt{\ln(N)}/\sigma$ when $A$ is normally distributed,
and grows like $N$ when $A$ is uniformly distributed.

Note that -- according to Theorem \ref{thm:failure_bounds} -- larger values of the condition number are unfavourable for finite precision algorithms. Thus, it is unsurprising that in
Figure \ref{fig:ExampleNDRandomResult} we see that MATLAB commits more
errors for large $N$ when $A$ is uniform than when $A$ is normal and
similarly that MATLAB commits more errors for large $N$ when $A$ is
normal than when $A$ is exponentially distributed.

\subsection{{\bf $\condfsul(b,U)$ and trustworthy algorithms for classical statistics}}
 
Our next set of major results is focused on the following question:
\vspace{1mm}
\begin{displayquote}
\normalsize
{\it In view of Example \ref{Example:NDRandom}, Figure \ref{fig:ExampleNDRandomResult} and Theorem \ref{thm:failure_bounds}, under which conditions do there exist efficient and trustworthy finite precision algorithms -- for the LASSO feature selection problem with random data -- that produce correct outputs with high probability?}
\end{displayquote}
\vspace{1mm}
Our final main result (Theorem~\ref{thm:NormalCondWainwright}) gives a
bound on the probability of $\condfsul$ being large for
normally distributed inputs $(b,U)$ (as a function of
the input's size). Its statement involves a number of technical
conditions laid out in Section~\ref{section:SetupForNormDist} below. 
To give an idea of its significance, however, we next state a variant
of it in a simple, yet frequently considered, context. 
Consistently with the notation used in Part~I, we write
$\trip{v}_2:=\max\{1,\|v\|_2\}$.

\begin{theorem}\label{cor:simple}
Let $v\in\real^N$, $S=\supp(v)$, and $s=|S|$. Assume $\ln (N/2)\leq s\leq N/8$
and $m\le N/9$. 
Let $U\in\real^{m\times M}$ be random with i.i.d. entries with
standard Gaussian distribution and $b=Uv$. There exists a
universal constant $\bar{c}\ge 1$ with the following property. 
Assume that
\begin{enumerate}[label={\rm (\roman*)}]
\item
$m>(1+\epsilon)12s\ln(N-s)$ for some $\epsilon\in(0,1/2)$ with
$\epsilon>8\sqrt{s/m}$, 
\item
$\displaystyle \bar{c}\lambda<2m\min_{j\in S}|v_j|-\frac{1}{N^2}$,
\item
$\lambda\ge \frac{2}{N^2}$.
\end{enumerate}
Then, the algorithm in Theorem~I.1.2, which reads variable-precision
approximations of input $(b,U)$, returns
$S=\supp(v)$, with a cost bounded by $\Oh\big(N^3(\log_2 N\trip{v}_2)^2\big) $
and maximum number of digits bounded by 
$\Oh(\left\lceil \log_2\left(N\trip{v}_2\right)\right\rceil)$
with probability at least $1- \bfC_1 N^{-\bfC_2}$ for some positive universal constants $\bfC_1$ and $\bfC_2$.
\end{theorem}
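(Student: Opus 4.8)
The result is a corollary of the general high-probability condition-number bound (Theorem~\ref{thm:NormalCondWainwright}) plugged into the complexity estimate of Theorem~\ref{thm:fsulcomp}(1). So the work is in two stages: (a) show that under hypotheses (i)--(iii) the event $\{\condfsul(b,U) < \alpha_N\}$ holds with probability at least $1 - \bfC_1 N^{-\bfC_2}$ for an explicit polynomially-bounded $\alpha_N$; (b) substitute $\alpha_N$ and the bound on $\trip{b,U}_{\max}$ into the cost expression $\Oh\{N^3[\log_2(N^2\trip{b,U}_{\max}^2\condfsul(b,U))]^2\}$ to get $\Oh(N^3(\log_2 N\trip{v}_2)^2)$ and the digit bound $\Oh(\lceil\log_2(N\trip{v}_2)\rceil)$.

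First I would recall the structure of $\condfsul$ from Part~I: it controls (roughly) the distance from $(b,U)$ to the ill-posed set, which for a problem of the form $b = Uv$ with a known support $S$ decomposes into (1) a lower bound on how strongly the subgradient/KKT conditions for the minimiser are satisfied strictly -- governed by quantities like $\min_{j\in S}|v_j|$ and the "dual feasibility margin" $\max_{j\notin S}$ of the relevant inner products -- and (2) conditioning of the restricted design matrix $U_S$ (its smallest singular value, or a restricted-eigenvalue / restricted-isometry type constant). Hypothesis (i), $m > (1+\epsilon)12 s\ln(N-s)$ with $\epsilon > 8\sqrt{s/m}$, is exactly the sample-complexity regime in which Gaussian designs satisfy mutual-incoherence / irrepresentability with high probability (this is the Wainwright-type condition alluded to in the paper's title of Section~\ref{section:SetupForNormDist}); so the plan is to invoke that to get, with probability $1 - \bfC_1 N^{-\bfC_2}$: a restricted-eigenvalue bound on $U_S^{\mathrm{T}}U_S$ bounded above and below by absolute constants times $m$, and a dual margin $\|U_{S^c}^{\mathrm{T}}U_S(U_S^{\mathrm{T}}U_S)^{-1}\sg(v_S)\|_\infty \le 1 - \gamma$ for a constant $\gamma$. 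Then hypotheses (ii)--(iii) -- $\bar c\lambda < 2m\min_{j\in S}|v_j| - N^{-2}$ and $\lambda \ge 2N^{-2}$ -- are calibrated so that on this good event the strict KKT margin at the true-support minimiser is at least of order $N^{-2}$ (comparing $\lambda$ against $2m\min_j|v_j|$, the factor $2m$ coming from the scale of $U_S^{\mathrm{T}}U_S$), which yields $\condfsul(b,U) = \Oh(\mathrm{poly}(N)\cdot\trip{v}_2)$, say $\condfsul \lesssim N^{c}\trip{v}_2$ for some constant $c$.

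Next, on the same good event I would bound the input size: $\trip{b,U}_{\max} = \max\{1, \|U\|_{\max}, \|b\|_\infty\}$; since the entries of $U$ are standard Gaussians, $\|U\|_{\max} \le \Oh(\sqrt{\ln(mN)}) = \Oh(\sqrt{\ln N})$ with probability $1 - N^{-\bfC_2}$ (adjusting constants), and $\|b\|_\infty = \|Uv\|_\infty \le \|U_S\|_{2\to\infty}\|v\|_2 \le \Oh(\sqrt{m\ln N})\,\|v\|_2 = \Oh(\mathrm{poly}(N))\trip{v}_2$ on the good event. Feeding $\condfsul \lesssim N^c\trip{v}_2$ and $\trip{b,U}_{\max} \lesssim \mathrm{poly}(N)\trip{v}_2$ into the logarithm in the cost bound of Theorem~\ref{thm:fsulcomp}(1) collapses everything to $\log_2(\mathrm{poly}(N)\trip{v}_2^{\Oh(1)}) = \Oh(\log_2(N\trip{v}_2))$, giving the stated cost $\Oh(N^3(\log_2 N\trip{v}_2)^2)$ and, from the precision the algorithm needs (essentially $\log_2$ of the same product), the digit bound $\Oh(\lceil\log_2(N\trip{v}_2)\rceil)$. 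A union bound over the $\Oh(1)$-many good events (restricted eigenvalue, incoherence margin, $\|U\|_{\max}$, $\|b\|_\infty$) preserves a failure probability of the form $\bfC_1 N^{-\bfC_2}$.

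**Main obstacle.** The technical heart is stage (a): establishing that the specific functional $\condfsul$ of Part~I -- not merely "$\ell_1$ recovers the right support", which classical Wainwright-type results already give -- is polynomially bounded. This requires translating the good-event design properties into a genuine lower bound on the $\ell_\infty$-distance from $(b,U)$ to the set where $\Xi$ is discontinuous (i.e.\ where either the minimiser is non-unique or its support is unstable), and tracking how the constants $12$, $8$, $\bar c$, and the $N^{-2}$ slack in (ii)--(iii) conspire to keep that distance $\gtrsim N^{-\Oh(1)}/\trip{v}_2$. I expect the bulk of the real argument to be reducing this to Theorem~\ref{thm:NormalCondWainwright} and then a careful, but routine, bookkeeping of which sub-Gaussian/chi-square concentration inequalities to cite (operator-norm bounds for $U_S$, Gaussian maxima for $\|U\|_{\max}$, and an incoherence bound for the off-support columns) and verifying each holds at probability $1 - N^{-\Oh(1)}$ under (i). Everything after that -- the cost and digit estimates -- is direct substitution.
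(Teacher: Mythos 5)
Your plan is correct and is essentially the paper's own proof: the paper verifies that (i)--(iii) imply the hypotheses (ai')--(aii') of the isotropic specialisation of Theorem~\ref{thm:NormalCondWainwright} (with $\eta=0$), bounds $\hat{\sigma}\gtrsim N^{-2}$ and $\hat{\alpha}\lesssim\sqrt{N}\trip{v}_2$ to get $\widehat{K}\lesssim N^{7.5}\trip{v}_2^5$, separately bounds $\trip{b,U}_{\max}\leq N^{2.5}\trip{v}_2$ via a Gaussian tail bound, and substitutes into the cost and digit estimates of Theorem~I.1.2, using $s\geq\ln(N/2)$ to convert the failure probability into $\bfC_1 N^{-\bfC_2}$. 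The only differences are cosmetic (e.g.\ you bound $\|U\|_{\max}$ by $\Oh(\sqrt{\ln N})$ where the paper uses the cruder $N^2$, which is immaterial inside the logarithm).
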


\begin{remark}
The more general statement in Theorem~\ref{thm:NormalCondWainwright} applies under the presence of noise ---now $b=Uv+w$ with $w\in\real^m$ random with i.i.d~entries
drawn from $\mcN(0,\eta^2)$--- however, this requires slightly more involved assumptions and is done in the next section.
\end{remark}

\section{Precise and general statement of Theorem \ref{cor:simple}}
\label{section:SetupForNormDist}

Recall, for a matrix $M\in\real^{m\times N}$, its
$\infty$-norm is given by 
$\|M\|_{\infty}:= \max_{i \in \{1,2,\dotsc,m\}} \sum_{j=1}^{N} |M_{i,j}|$.

In this section we consider $m$ feature points $a_i\in\real^N$
independently drawn from a normal distribution in $\real^N$.
Moreover, we assume that the data $y$ is a random
corruption of a fixed linear predictor $v\in\real^N$.
More precisely, we consider the setup given by the following assumptions:
\begin{enumerate}[label = (S\roman*)]
\item
$A \in \real^{m \times N}$ is a random matrix such that each row is an
i.i.d.~random vector with distribution $\mcN(0,\Sigma)$ for some covariance
matrix $\Sigma$. 
\item
The vector $w \in \real^{m}$, chosen independently from $A$, has
i.i.d.~entries with $\mcN(0,\eta^2)$ distribution. 
\item
$y = Av + w$ where $v\in\real^N$ is non-random and has support
$S$ with $|S| = s$. 
\end{enumerate}

As it happens, we cannot expect a useful probabilistic bound on $\condfsul$
for arbitrary $m,N$, and $\Sigma$ even if there is no noise in the
measurements (i.e. $\eta^2 = 0$). 
In order to prove a useful bound on the condition number we will
therefore make some assumptions on the covariance matrix $\Sigma$,
number of measurements $m$, and vectors $v$. Since our intention is to
understand when the lasso can be successfully applied, it is sensible
to use existing conditions that guarantee a low chance of a recovery error.
Our goal will be to show that these conditions
can also be used to give an upper bound on the condition number and
thus a guarantee of a low chance of a numerical error. As a starting
point, we therefore use the conditions defined in an important paper
in understanding recovery errors for unconstrained
lasso,~\cite{Wainwright:09}.
More precisely, we make use of the following parameters
taken from~\cite{Wainwright:09}:
\begin{enumerate} 
\item
For sets $G = \{g_1,g_2,\dotsc,g_{|G|}\}\subseteq \{1,2,\dotsc,N\}$ and
$H = \{h_1,h_2,\dotsc,h_{|H|}\} \subseteq \{1,2,\dotsc,N\}$ we define
the matrix $\Sigma_{GH} \in \real^{|G| \times |H|}$ so that the $i,j$th entry $(\Sigma_{GH})_{i,j}$ is given by $\Sigma_{g_i,h_j}$
In other words, $\Sigma_{GH}$ is the restriction of $\Sigma$ to the rows given
by $G$ and the columns given by $H$.
\item
Define $C_{\min}$ and $C_{\max}$ to be, respectively, the minimal and
maximal eigenvalues of $\Sigma_{SS}$.
\item
Define the matrix $\Sigma_{S^{\mathsf{c}}|S} \in \real^{(N-s) \times (N-s)}$ by
$\Sigma_{S^{\mathsf{c}}|S}:= \Sigma_{S^{\mathsf{c}}S^{\mathsf{c}}}
- \Sigma_{S^{\mathsf{c}}S}(\Sigma_{SS})^{-1}\Sigma_{SS^{\mathsf{c}}}$.
\item We define
$\gamma :=   1-\|\Sigma_{S^{\mathsf{c}}S}(\Sigma_{SS})^{-1}\|_{\infty} $.
\item
For a square, symmetric, matrix $M$ we define
$\rho_l(M) := \min_{i \neq j} (M_{ii} + M_{jj} - 2M_{ij})/2$ and
$\rho_u(M) := \max\{M_{ii}\}$. When convenient, we will write for
shorthand $\rho_u = \rho_u(\Sigma_{S^{\mathsf{c}}|S})$ and
$\rho_l = \rho_l(\Sigma_{S^{\mathsf{c}}|S})$.
\item
Define $\theta_l = \theta_l(\Sigma) := \rho_l/(C_{\max} (2 - \gamma^2))$
and $\theta_u=\theta_u(\Sigma) := \rho_u/(C_{\min}\gamma^2)$,
\item Define \begin{equation}\label{eq:phiN}
	\phi_N:=\frac{\lambda^2}{8\eta^2\ln(N)C_{\min}\theta_u m}, 
\end{equation}
\end{enumerate}

\begin{remark}\label{remark:CminCmax}
Although at first glance these parameters
---$C_{\min},C_{\max},\theta_u,\theta_l$ and $\gamma$---
seem somewhat complicated, it can be revealing to consider their values 
when the rows of $A$ are drawn from a standard isotropic Gaussian, that is,
when $\Sigma=\Id_N$. In this case, for any $S$,  $\Sigma_{SS}=\Id_s$.
Therefore clearly,
$C_{\min}=C_{\max}=1$. Also, $\Sigma_{S^{\mathsf{c}}S}=0$ and 
$\Sigma_{S^{\mathsf{c}}S^{\mathsf{c}}} = \Id_{N-s}$ so that $\Sigma_{S^{\mathsf{c}}|S}=\Id_{N-s}$ 
and hence, 
$\gamma=\rho_l = \rho_u = 1$  and thus $\theta_l=\theta_u=1$. This means that 
$\phi_N = \lambda^2 (8 \eta^2 \log(N) m)^{-1}$.
\end{remark}
We next consider the following assumption:
\begin{description}
\item[(a0)]
$\gamma$ is strictly positive.
\end{description}

Our starting point is ~\cite[Thm.~3]{Wainwright:09}, which when written
in the notation of this paper becomes the following.

\begin{theorem}[Theorem 3~\cite{Wainwright:09}]\label{thm:WainwrightMainTheorem}
Assume that $\phi_N \geq 2$ and that
\begin{equation}\label{eq:mConditionWainwrightLasso}
		\frac{m}{2s\ln(N-s)} > (1+\epsilon) \theta_u
		\left(1+\frac{4m^2\eta^2 C_{\min}}{\lambda^2 s}\right)
\end{equation} for some $\epsilon \in (0,1/2)$ with
$\epsilon> \max\{8C_{\min} \sqrt{s/m},\sqrt{s/m}\}$ as well as assumption (a0).
 
Then there exist constants $c_1,c_2$ and $c_3$ independent of all
parameters such that the following is true. If
\[
  g(\lambda) := \frac{c_3 \lambda\|\Sigma^{-1/2}\|_{\infty}^2} {2m}
  + 20 \sqrt{\frac{\eta^2\ln(s)}{C_{\min}m}} < \min_{j\in S}|v_j|
\] 
then, with probability greater than $1-c_1 \rme^{-c_2 \min\{s,\ln(N-s)\}}$, 
the LASSO problem $\mUL(y,A)$ defined in \eqref{eq:TheSolSet} has a unique solution $x$. Moreover
$x$ satisfies the following:
\begin{enumerate}[label={\rm (\arabic*)}]
\item $\supp(x) = S$ 
\item $\sg(x_S) = \sg(v_S)\in\{-1,1\}^S$
\item $\|x_S - v_S\|_{\infty} \leq g(\lambda)$.\eproof
\end{enumerate}
\end{theorem}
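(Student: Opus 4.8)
The plan is to treat this statement as what it is --- a transcription of \cite[Theorem~3]{Wainwright:09} into the present notation --- so the ``proof'' is a careful reconciliation of normalisation conventions together with a term-by-term matching of hypotheses and conclusions. First I would rescale the objective: the functional $\|U\hat x - b\|_2^2 + \lambda\|\hat x\|_1$ appearing in \eqref{eq:TheSolSet} equals $2m$ times $\tfrac1{2m}\|U\hat x - b\|_2^2 + \lambda_m\|\hat x\|_1$ with $\lambda_m := \lambda/(2m)$, so the minimiser set $\mUL(y,A)$ is unchanged when we pass to the normalisation of \cite{Wainwright:09}, where the Lasso reads $\min_{\hat x}\bigl\{\tfrac1{2m}\|U\hat x - b\|_2^2 + \lambda_m\|\hat x\|_1\bigr\}$ and the design has rows i.i.d.\ $\mcN(0,\Sigma)$ with no column normalisation. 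Under the setup (Si)--(Siii) of Section~\ref{section:SetupForNormDist} the probabilistic model here is then exactly Wainwright's, with noise variance $\sigma^2=\eta^2$, true vector $\beta^\ast = v$, and active set $S$ of size $s$.

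Next I would verify that the present hypotheses imply Wainwright's. The parameters $\gamma$, $C_{\min}$, $C_{\max}$, $\rho_l$, $\rho_u$, $\theta_l$, $\theta_u$ are defined here exactly as in \cite{Wainwright:09}, and assumption (a0) is his strict mutual-incoherence requirement. Substituting $\lambda = 2m\lambda_m$ into \eqref{eq:phiN} I would check that $\phi_N\ge 2$ is equivalent to $\lambda_m\ge 2\eta\sqrt{\theta_u C_{\min}\ln(N)/m}$, which is Wainwright's lower bound on the regularisation parameter; the same substitution should turn \eqref{eq:mConditionWainwrightLasso} into $m > 2(1+\epsilon)\theta_u\bigl(1+\tfrac{\eta^2 C_{\min}}{\lambda_m^2 s}\bigr)s\ln(N-s)$, his sample-size scaling, while the constraint $\epsilon\in(0,1/2)$ with $\epsilon>\max\{8C_{\min}\sqrt{s/m},\sqrt{s/m}\}$ transcribes verbatim.

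Finally I would read off the conclusion. Wainwright's theorem then produces, with probability at least $1-c_1\rme^{-c_2\min\{s,\ln(N-s)\}}$, a unique minimiser $x$ of the rescaled Lasso --- hence of \eqref{eq:TheSolSet} --- with $\supp(x)=S$, $\sg(x_S)=\sg(v_S)$, and an $\ell_\infty$-bound $\|x_S-v_S\|_\infty\le b(\lambda_m)$ of the form (a universal constant)$\cdot\lambda_m\cdot(\text{a quantity depending on }\Sigma_{SS})+$ (a universal constant)$\cdot\sqrt{\eta^2\ln(s)/(C_{\min}m)}$. I would then substitute $\lambda_m=\lambda/(2m)$ and bound the $\Sigma_{SS}$-quantity by $\|\Sigma^{-1/2}\|_\infty^2$ to obtain exactly $g(\lambda)=\tfrac{c_3\lambda\|\Sigma^{-1/2}\|_\infty^2}{2m}+20\sqrt{\eta^2\ln(s)/(C_{\min}m)}$, so that the hypothesis $g(\lambda)<\min_{j\in S}|v_j|$ coincides with his $\beta$-min condition and the conclusions (1)--(3) are his.

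I expect the only real work to be bookkeeping: tracking every factor of $m$, every $\tfrac12$, and every occurrence of $\Sigma_{SS}$ versus $\Sigma$ through the rescaling $\lambda\mapsto\lambda/(2m)$; confirming that the constant $c_3$ and the numerical constant $20$ in $g(\lambda)$ are those of \cite{Wainwright:09} (or harmless upper bounds); and checking that Wainwright's Gaussian-ensemble design needs no further normalisation of $U$, so that no extra rescaling leaks into the hypotheses. Once these conventions are pinned down, the statement follows directly from \cite[Theorem~3]{Wainwright:09}.
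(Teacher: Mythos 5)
Your proposal takes exactly the route the paper does: the paper gives no independent proof of this statement (it is stated, with the proof omitted, as the transcription of \cite[Thm.~3]{Wainwright:09} into the present normalisation), and your reconciliation via $\lambda_m=\lambda/(2m)$, the identification $\sigma^2=\eta^2$, and the term-by-term matching of \eqref{eq:phiN}, \eqref{eq:mConditionWainwrightLasso} and $g(\lambda)$ is precisely that transcription. The only point to keep an eye on in the bookkeeping is the $\|\Sigma^{-1/2}\|_{\infty}$ versus $\|(\Sigma_{SS})^{-1/2}\|_{\infty}$ factor in $g(\lambda)$, which should be checked against Wainwright's statement rather than assumed to dominate, but this does not change the approach.
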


Theorem \ref{thm:WainwrightMainTheorem} is a very important result in
the literature concerning the lasso. Firstly, the result applies both
when $\Sigma = \Id_N$ and $\eta=0$ (giving the traditional
bound from the compressed sensing literature that $m \geq Cs\log(N-s)$
for some constant $C$ as a sufficient condition for recovering the set
$S$ from gaussian measurements) and when $\Sigma \neq \Id_N$ to
explain the more realistic scenario of trying to distinguish between
correlated normally distributed features.

The second and perhaps more crucial reason for the importance of
Theorem \ref{thm:WainwrightMainTheorem} is
optimality. Indeed, \cite[Thm.~4]{Wainwright:09} also contains
a corresponding lower bound: if instead of
assuming \eqref{eq:mConditionWainwrightLasso}, we assume
that \begin{equation*} \frac{m}{2s\ln(N-s)} <
(1-\epsilon) \theta_l \left(1+\frac{4m^2\eta^2
C_{\max}}{\lambda^2 s}\right)
\end{equation*} 
then (with probability approaching $1$ as $N$ increases) no solution
$x$ of the lasso problem has $\supp(x) = S$ and $\sg(x_S)
= \sg(v_S)$. Thus the hypotheses of
Theorem~\ref{thm:WainwrightMainTheorem} can be
seen as necessary for the lasso to avoid recovery errors and are
therefore a basic requirement for working with the lasso with normally
distributed data. 

It is therefore worth considering if the conditions in
Theorem \ref{thm:WainwrightMainTheorem} and assumption (a0) are
sufficient to also avoid numerical errors. The major result of this
section will be to show that this is indeed the case, under slightly
stronger assumptions. More precisely, we assume the following:
\begin{description}
\item[(ai)]
The number of measurements $m$ satisfies
\begin{equation}\label{eq:normalMRequiredMeasurements}
	m\left(\frac{1}{1+\epsilon} - \frac{6}{\phi_N}\right)
        > 12s\ln(N-s)\theta_u
\end{equation}
for some $\epsilon \in (0,1/2)$ with 
$\epsilon > \max\{8C_{\min} \sqrt{s/m},\sqrt{s/m}\}$.
\item[(aii)]
$\displaystyle g(\lambda) < \min_{s\in S}|v_s|$.
\end{description}

At first glance, it may seem that assumption (ai) is only loosely
related to \eqref{eq:mConditionWainwrightLasso}. However, because
of~\eqref{eq:phiN}, we can write \eqref{eq:mConditionWainwrightLasso}
as
\begin{equation*}
	\frac{m}{2s\ln(N-s)} >(1+\epsilon) \theta_u
	\left(1+\frac{m}{2s\ln(N)\theta_u\phi_N}\right)
\end{equation*}
Replacing the constant $1/2$ in the left-hand side by the smaller
value $1/12$ and replacing $1/\ln(N)$ by the larger value $1/\ln(N-s)$
in the right-hand side of this bound yields the simpler
inequality \eqref{eq:normalMRequiredMeasurements}.
Thus \eqref{eq:normalMRequiredMeasurements}
can be seen as a slightly stronger condition
than \eqref{eq:mConditionWainwrightLasso} (but note
that \eqref{eq:normalMRequiredMeasurements} is still optimal up to the
change of constants). Moreover, for the left hand side
of \eqref{eq:normalMRequiredMeasurements} to be positive we will
require $\phi_N \geq 6(1+\epsilon)$ and hence assumption (ai)
supersedes the requirement that $\phi_N \geq 2$.

To state our result under assumptions (ai--aii) we will use one
additional parameter. Recall from (I.7.1)
the function $q$ in the variables $\nu,\xi>0$,
\begin{equation}\label{eq:q}
	q(\nu,\xi) := 96\nu^5+12\nu^3(1+\lambda \sqrt{N})\sqrt{\xi}
	+ \xi \left(\frac{2\nu^3}{\lambda} + 3\nu\right).
\end{equation}
We are now in a position to state the major result of this section.

\begin{theorem}\label{thm:NormalCondWainwright}
In the setup described by {\rm (Si--iii)} and under the
assumptions {\rm (ai--aii)},
there exists constants $\bfc_1,\bfc_2$ such that if $p = \bfc_1 \rme^{-\bfc_2\min\{\ln(N-s),s\}}$ then
\[
\probab(\condfsul(y,A) \geq \widehat{K} ) \leq p
\text{ where }
\widehat{K}:=(mN)^{\frac{1}{2}}\max\left\{\frac{q(\hat{\alpha},
	\hat{\sigma})}{\hat{\sigma}^2},
	\frac{6\hat{\alpha}}{\sqrt{\hat{\sigma}}},1 \right\}
\]
with
\begin{align*}
	&\hat{\sigma} =
        \min\Big\{C^2_{\min}/(4(\sqrt{s}+\sqrt{m})^4),\lambda/2,
	{\displaystyle\min_{j\in S}}|v_j| - g(\lambda)\Big\},\\
	&\hat{\alpha} = \max\big\{1,\sqrt{2m}(\eta + C_{\max}\|v\|_2),
	\|\Sigma\|_2^{1/2}(3\sqrt{m}+6\sqrt{N})\big\}.
\end{align*}

In particular, the algorithm in Theorem~I.1.2, which reads variable-precision
approximations of input $(y,A)$, returns
$S=\supp(v)$, with a cost bounded by $\Oh\big(N^3(\log_2 \hat K\trip{v}_2)^2\big)$
and maximum number of digits bounded by 
$
\Oh(\left\lceil \log_2\left(\hat K\trip{v}_2\right)\right\rceil).
$
with probability at least $p$.
\end{theorem}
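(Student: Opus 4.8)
The plan is to separate a \emph{deterministic} worst-case bound on $\condfsul(y,A)$ --- one valid for every input in a suitable region of $\real^m\times\real^{m\times N}$ --- from a \emph{probabilistic} step showing that a Gaussian pair $(y,A)$ generated as in {\rm (Si--iii)} lands in that region, with all the geometric quantities in it under control, except on an event of probability at most $p$. For the deterministic half I would recall from Part~I the quantitative estimate underlying $\condfsul$: if $\mUL(y,A)$ has a unique minimiser $x$ whose support $S$ is \emph{stable} --- $\min_{j\in S}|x_j|$ is bounded below, the subgradient (KKT) inequalities on $S^{\mathsf c}$ hold with strictly positive slack, and the active block $A_S$ has strictly positive least singular value --- then $\condfsul(y,A)$ is controlled by an explicit function $F(\alpha,\sigma)$ of a lower bound $\sigma>0$ on those three margins and an upper bound $\alpha$ on the norms $\|y\|_2,\|A\|_2,\lambda\sqrt N$. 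The key point is that this bound has exactly the shape $F(\alpha,\sigma)=(mN)^{1/2}\max\{q(\alpha,\sigma)/\sigma^2,\,6\alpha/\sqrt\sigma,\,1\}$ with $q$ as in~\eqref{eq:q}, and that $F$ is nondecreasing in $\alpha$ and nonincreasing in $\sigma$ (since $q$ is increasing in its first argument, and $\sigma\mapsto q(\alpha,\sigma)/\sigma^2$, $\sigma\mapsto 6\alpha/\sqrt\sigma$ are decreasing). Hence it suffices to produce, with high probability, a single admissible pair, and the claim is that $(\alpha,\sigma)=(\hat\alpha,\hat\sigma)$ works.

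Next I would define the good event $\mathcal E$ as the intersection of four events and control each. (E1): the conclusions of Theorem~\ref{thm:WainwrightMainTheorem} hold --- legitimate since, as discussed in \S\ref{section:SetupForNormDist}, (ai) forces $\phi_N\ge 6(1+\epsilon)\ge 2$ and is stronger than \eqref{eq:mConditionWainwrightLasso}, (a0) holds in this regime, and (aii) is exactly $g(\lambda)<\min_{j\in S}|v_j|$ --- so $\mUL(y,A)$ has a unique minimiser $x$ with $\supp(x)=S$, $\sg(x_S)=\sg(v_S)$ and $\|x_S-v_S\|_\infty\le g(\lambda)$, whence $\min_{j\in S}|x_j|\ge\min_{j\in S}|v_j|-g(\lambda)$; quantifying the proof of that theorem also yields that the slack in the dual inequalities on $S^{\mathsf c}$ is $\gtrsim\lambda$, so the governing margin dominates $\min\{\lambda/2,\ \min_{j\in S}|v_j|-g(\lambda)\}$. (E2): writing $A_S=G\,\Sigma_{SS}^{1/2}$ with $G\in\real^{m\times s}$ standard Gaussian, the Davidson--Szarek bounds on the extreme singular values of $G$ give $\sigma_{\min}(A_S)\gtrsim C_{\min}^{1/2}(\sqrt m-\sqrt s)$ and $\|A_S\|_2\lesssim C_{\max}^{1/2}(\sqrt m+\sqrt s)$, together producing the factor $C_{\min}^2/(4(\sqrt s+\sqrt m)^4)$ in $\hat\sigma$. (E3): again by Davidson--Szarek, $\|A\|_2\le\|\Sigma\|_2^{1/2}(\sqrt m+\sqrt N+t)\le\|\Sigma\|_2^{1/2}(3\sqrt m+6\sqrt N)$. (E4): $\|w\|_2\lesssim\eta\sqrt m$ and $\|y\|_2\le\|A_S\|_2\|v\|_2+\|w\|_2\le\sqrt{2m}(\eta+C_{\max}\|v\|_2)$. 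On $\mathcal E$, the pair $(\hat\alpha,\hat\sigma)$ from the statement bounds, respectively, all the norm and all the margin quantities of the deterministic estimate, so by monotonicity $\condfsul(y,A)\le F(\hat\alpha,\hat\sigma)=\widehat K$.

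Then I would union-bound the complements: (E1) fails with probability $\le c_1\rme^{-c_2\min\{s,\ln(N-s)\}}$ by Theorem~\ref{thm:WainwrightMainTheorem}, while (E2)--(E4) each fail with probability $\le c\,\rme^{-c'm}$ by Gaussian concentration, and since (ai) forces $m\gtrsim s\ln(N-s)\ge\ln(N-s)$ these are $\le c\,\rme^{-c'\ln(N-s)}$; choosing $\bfc_1,\bfc_2$ to dominate all four terms gives $\probab(\mathcal E^{\mathsf c})\le p=\bfc_1\rme^{-\bfc_2\min\{\ln(N-s),s\}}$. The complexity claim is then immediate: on $\mathcal E$, $\condfsul(y,A)\le\widehat K<\infty$, so by Theorem~\ref{thm:fsulcomp}(1) the algorithm halts and returns an element of $\Xi(y,A)$, which by (E1) equals $\{S\}$; substituting $\condfsul(y,A)\le\widehat K$ and the bound for $\trip{y,A}_{\max}$ available on $\mathcal E$ into the cost and digit estimates of Theorem~\ref{thm:fsulcomp}(1) yields the claimed $\Oh(N^3(\log_2\widehat K\trip{v}_2)^2)$ and $\Oh(\lceil\log_2(\widehat K\trip{v}_2)\rceil)$.

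The main obstacle is the deterministic step: putting the Part~I bound into exactly the form $F(\alpha,\sigma)$ above and verifying its monotonicity, and --- inside (E1) --- extracting a quantitative, high-probability lower bound of order $\lambda$ on the slack in the strict dual feasibility condition, since Theorem~\ref{thm:WainwrightMainTheorem} as stated only asserts strictness. The Gaussian estimates (E2)--(E4) are routine; the only care needed there is to check that every tail exponent is at least a fixed multiple of $\ln(N-s)$, so that the union bound collapses to the single rate appearing in $p$.
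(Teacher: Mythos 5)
Your proposal follows essentially the same route as the paper: the deterministic bound is exactly Proposition \ref{proposition:rhofssigmalb} combined with the monotonicity of $\xi^2/q(\nu,\xi)$ and $\sqrt{\xi}/(6\nu)$, and the probabilistic step is the same union bound over the failure of Wainwright's conclusions (controlling $\sigma_3$ and, via the dual variables $Z_j$, the slack $\sigma_1$), a least-singular-value event for $A_S$ (the paper uses Wainwright's Lemma~9 on $\|(A_S^*A_S/m)^{-1}-\Sigma_{SS}^{-1}\|_2$ where you invoke Davidson--Szarek, an immaterial substitution), and norm bounds on $\|A\|_2$ and $\|y\|_2$. The one obstacle you flag --- quantifying the dual-feasibility slack at order $\lambda$ --- is resolved in the paper exactly as you anticipate, by importing the tail bound on $\max_{j\in S^{\mathsf c}}|Z_j|$ from \cite[p.~2193]{Wainwright:09} (Lemma \ref{lemma:WainwrightSigma1}) rather than from the statement of Theorem \ref{thm:WainwrightMainTheorem} alone.
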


As per Remark \ref{remark:CminCmax}, when the rows of $A$ are drawn from a
standard isotropic Gaussian assumption (a0) is automatically satisfied
whereas (ai--aii) reduce to the following
\begin{description}
\item[(ai')]
The number of measurements $m$ satisfies
\begin{equation}
	m\left(\frac{1}{1+\epsilon} - \frac{6}{\phi_N}\right) > 12s\ln(N-s)
\end{equation}
for some $\epsilon \in (0,1/2)$ with 
$\epsilon > 8\sqrt{s/m}$.
\item[(aii')]
$\displaystyle g(\lambda) < \min_{s\in S}|v_s|$ where 
$g(\lambda) = c_3\lambda/(2m) + 20 \sqrt{\eta^2 \ln(s)/m}$ and $c_3$
is as in Theorem~\ref{thm:WainwrightMainTheorem}.
\end{description}
and hence we obtain the following simpler form of
Theorem~\ref{thm:NormalCondWainwright} which we state in full.

\begin{corollary}\label{cor:isotropic}
Assume $\Sigma=\Id_N$. In the setup above and under assumptions
{\rm (ai')} and {\rm (aii')},
there exist absolute constants $\bfc_1$ and $\bfc_2$
such that the following holds true. Let 
\begin{align*}
&\hat{\sigma} = \min\left\{\frac{1}{4(\sqrt s+ \sqrt m)^4},\frac{\lambda}{2},
		{\displaystyle\min_{j\in S}}|v_j| - g(\lambda)\right\}\\
		\mbox{and\quad}
&\hat{\alpha} = \max\Big\{\sqrt{2m}(\eta + \|v\|_2),
3\sqrt{m}+6\sqrt{N}\Big\}.
\end{align*}
Then,
$
 \probab(\condfsul(y,A) \geq \widehat{K} ) \leq \bfc_1 \rme^{-\bfc_2\min\{\ln(N-s),s\}}
$  
where $
\widehat{K}:=(mN)^{\frac{1}{2}}\max\left\{\frac{q(\hat{\alpha},\hat{\sigma})}
{\hat{\sigma}^2},\frac{6\hat{\alpha}}{\sqrt{\hat{\sigma}}}\right\}.$
In particular, the algorithm in Theorem~I.1.2, which reads variable-precision
approximations of input $(y,A)$, returns
$S=\supp(v)$, with a cost bounded by $\Oh\big(N^3(\log_2 \hat K\trip{v}_2)^2\big)$
and maximum number of digits bounded by 
$
\Oh(\left\lceil \log_2\left(\hat K\trip{v}_2\right)\right\rceil).
$
with probability at least $p$.
\end{corollary}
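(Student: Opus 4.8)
The plan is to obtain this corollary as the $\Sigma=\Id_N$ specialisation of Theorem~\ref{thm:NormalCondWainwright}, so that the entire argument is a verification that every $\Sigma$-dependent quantity collapses to the stated form. I would start from the computations recorded in Remark~\ref{remark:CminCmax}: when $\Sigma=\Id_N$ one has $\Sigma_{SS}=\Id_s$, $\Sigma_{S^{\mathsf{c}}S}=0$ and $\Sigma_{S^{\mathsf{c}}|S}=\Id_{N-s}$ for every support set $S$, whence $C_{\min}=C_{\max}=1$, $\gamma=\rho_l=\rho_u=1$, $\theta_l=\theta_u=1$, and $\phi_N=\lambda^2(8\eta^2\ln(N)m)^{-1}$. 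In particular assumption~(a0) holds since $\gamma=1>0$.

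Next I would check that in this case the hypotheses of Theorem~\ref{thm:NormalCondWainwright} are exactly (ai$'$)--(aii$'$). Substituting $\theta_u=1$ into \eqref{eq:normalMRequiredMeasurements} turns assumption~(ai) into (ai$'$), and the side condition $\epsilon>\max\{8C_{\min}\sqrt{s/m},\sqrt{s/m}\}$ becomes $\epsilon>8\sqrt{s/m}$. For (aii), since $\Sigma^{-1/2}=\Id_N$ gives $\|\Sigma^{-1/2}\|_{\infty}=1$ and $C_{\min}=1$, the function $g$ of Theorem~\ref{thm:WainwrightMainTheorem} reduces to $g(\lambda)=c_3\lambda/(2m)+20\sqrt{\eta^2\ln(s)/m}$, so (aii) is precisely (aii$'$). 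Hence Theorem~\ref{thm:NormalCondWainwright} applies and yields $\probab(\condfsul(y,A)\ge\widehat K)\le\bfc_1\rme^{-\bfc_2\min\{\ln(N-s),s\}}$ with $\widehat K,\hat\sigma,\hat\alpha$ as in that theorem.

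It then remains to simplify these three quantities. In $\hat\sigma$ the entry $C^2_{\min}/(4(\sqrt s+\sqrt m)^4)$ becomes $1/(4(\sqrt s+\sqrt m)^4)$ and $\min_{j\in S}|v_j|-g(\lambda)$ uses the simplified $g$; the entry $\lambda/2$ is untouched. In $\hat\alpha$, $C_{\max}=1$ gives $\sqrt{2m}(\eta+C_{\max}\|v\|_2)=\sqrt{2m}(\eta+\|v\|_2)$ and $\|\Sigma\|_2=1$ gives $\|\Sigma\|_2^{1/2}(3\sqrt m+6\sqrt N)=3\sqrt m+6\sqrt N\ge 9>1$, so the entry $1$ in that maximum is redundant. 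Finally, since $\hat\alpha\ge 1$ and $\hat\sigma\le 1/(4(\sqrt s+\sqrt m)^4)<1$ we get $6\hat\alpha/\sqrt{\hat\sigma}>6>1$, so the entry $1$ in the maximum defining $\widehat K$ may also be dropped. This produces exactly the displayed $\hat\sigma$, $\hat\alpha$ and $\widehat K$. The algorithmic conclusion (correct output $S=\supp(v)$ on the event $\{\condfsul(y,A)<\widehat K\}$, which has probability at least $1-\bfc_1\rme^{-\bfc_2\min\{\ln(N-s),s\}}$, with cost $\Oh(N^3(\log_2\widehat K\trip{v}_2)^2)$ and digit bound $\Oh(\lceil\log_2(\widehat K\trip{v}_2)\rceil)$) is then the corresponding clause of Theorem~\ref{thm:NormalCondWainwright} read off verbatim, which in turn combines Theorem~\ref{thm:fsulcomp}(1) with the a priori bound on $\triple{y,A}_{\max}$ established in the proof of Theorem~\ref{thm:NormalCondWainwright} (using that $\widehat K\ge(mN)^{1/2}$ absorbs the polynomial-in-$N$ factors into the logarithm).

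Since Theorem~\ref{thm:NormalCondWainwright} carries all the probabilistic weight, there is no serious obstacle; the only points requiring genuine (if minor) care are the identities $\|\Sigma^{-1/2}\|_{\infty}=1$ and $\|\Sigma\|_2=1$ for $\Sigma=\Id_N$, the consequent collapse of $g$, $\theta_u$, $C_{\min}$ and $C_{\max}$ everywhere they appear, and the observation that the three redundant $1$'s in the maxima defining $\hat\alpha$ and $\widehat K$ can be removed without changing their values under the standing assumptions.
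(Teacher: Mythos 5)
Your proposal is correct and follows exactly the route the paper takes: the paper gives no separate proof of Corollary~\ref{cor:isotropic}, deriving it directly from Theorem~\ref{thm:NormalCondWainwright} via the parameter collapse recorded in Remark~\ref{remark:CminCmax} ($C_{\min}=C_{\max}=\gamma=\rho_u=\theta_u=1$, $\|\Sigma\|_2=\|\Sigma^{-1/2}\|_\infty=1$), which turns (ai)--(aii) into (ai$'$)--(aii$'$) and simplifies $\hat\sigma$, $\hat\alpha$ and $\widehat K$ as you describe. Your additional observations that the redundant $1$'s in the maxima can be dropped are correct and merely make explicit what the paper leaves implicit.
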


\section{Connection to previous work} Below follows an account of
the connection to different areas and works that are crucial for the paper. 

\begin{itemize}[leftmargin=5pt]
	\item[] \emph{Condition in optimisation:} The concept of condition numbers has proven a crucial to
	computational mathematics and numerical analysis for securing trustworthy algorithms that are accurate and stable~\cite{Higham96, Cucker_Smale97}. 
	J. Renegar's contributions in optimization and condition are particularly noteworthy, with~\cite{Renegar1, Renegar2, renegar1988polynomial,
		renegar2001mathematical, Renegar96} important for understanding
	stability, accuracy, and efficiency of optimization algorithms. This is extensively discussed in~\cite{Condition}. Furthermore, the following have important links to condition in optimization:
	J. Pe{\~n}a~\cite{Pena2002, Pena2001} as well as D. Amelunxen,
	M. Lotz, J. Walvin~\cite{Lotz2020}, and D. Amelunxen, M. Lotz,
	M. McCoy, J. Tropp~\cite{Lotz2014} see also~\cite{ChCP07,
		felipe_cond_01, ChC02}.
	\vspace{1mm}
	
	\item[] \emph{GHA and robust optimisation:}
	GHA~\cite{opt_big, CSBook, gazdag2022generalised, comp_stable_NN22,
		paradox22} plays an instrumental role in establishing some of the computational barriers that this paper introduces. The topic is related (although mathematically very different)
	to hardness of approximation in computer
	science~\cite{Arora2007}. Importantly, GHA in optimisation can be viewed
	as a part of the broader program on robust optimisation (A. Ben-Tal, L. El
	Ghaoui \& Nemirovski~\cite{Nemirovski_robust, NemirovskiLRob,
		Nemirovski_robust2}) for computing minimisers. It is also a
	part of broader efforts to establishthe mathematics behind the Solvability Complexity
	Index (SCI) hierarchy, see for example the work by J. Ben-Artzi,
	M. Colbrook, M. Marletta~\cite{Hansen_JAMS, Ben_Artzi2022, SCI,
		Matt1}.
	
	\vspace{1mm}
	
	\item[] \emph{Trustworthy algorithms and computer assisted proofs:}
	The importance of finding trustworthy algorithms for optimisation goes beyond scientific computing: it has important implications in computer assisted proofs. T. Hales' proof of Kepler's
	conjecture~\cite{Hales1, Hales2} is a prime example.  Hales' computer assisted proof of this famous conjecture relied on solving around 50,000 linear
	programs with irrational inputs, thus highlighting the importance of understanding 
	computation with inexact inputs across all of mathematics. For other examples, see~\cite{AIM}. Of particular interest to this work are Problem 2 (T. Hou) and Problem 5 (J. Lagarias) which discusses results on
	developing algorithms that are 100\% trustworthy and thus appropriate for
	computer assisted proofs.
	
	\vspace{1mm}
	
	\item[] \emph{Algorithms for computing minimisers of LASSO:}   Numerous algorithms are available for solving the LASSO problem, as detailed in the review articles by Nesterov \& Nemirovski \cite{Nesterov_Nemirovski_Acta} and Chambolle \& Pock \cite{chambolle_pock_2016}, which include additional references for a comprehensive understanding. See also the work by Beck \& Teboulle \cite{Fista} and Wright, Nowak \&  Figueiredo \cite{Mario_Lasso}, and the references therein, as well as \cite{Boyd, wright2022optimization, CSBook}. However, while these algorithms are capable of approximating the objective function, they cannot -- in general -- determine the support sets of minimisers of LASSO. 
\end{itemize}

\section{Definitions and results from \cite{BCH1}}
In this section we recall some definitions and results that are taken from~\cite{BCH1}. These are provided without proofs, as the proofs are contained in~\cite{BCH1}, and are included for the sake of ensuring that this paper can be read as self contained material.

In addition to $\triple{y,A}_{\max}$ we will also use the $\ell^p$-norm
$\|y\|_{p}$ of $y$, the operator norms 
$\|A \|_{qr} = \sup_{\|x\|_q = 1} \|Ax\|_r$  (writing $\|A\|_q$ when $q=r$),
and the truncated norms 
\begin{equation*}
	\nTrOneStar{y,A}= \max\left\{\sum_{i=1}^{m}\sum_{j=1}^{N} |A_{ij}|,\,
	\sum_{i=1}^{m}  |y_i|,\,1\right\}
\end{equation*}
and $\nTrTwo{y,A}:=\max \left\{\|A\|_2,\|y\|_2,1\right\}$.

\begin{definition}\label{def:stabSupp}
	The \emph{stability support} of a pair $(y,A)$ is defined as 
	\begin{align*}
		\stabsupp(y,A):=\inf \Big\lbrace& \delta \geq 0 \, \big\vert \, 
		\exists\,  \tilde{y} \in \real^{m},\tilde{A} \in \real^{m \times N},
		x\in \mUL(y,A), \mbox{ and } \tilde{x} \in \mUL(\tilde{y},\tilde{A})\\ 
		&\!\!\mbox{ such that } \|\tilde{y} - y\|_{\infty}, \nmax{A - \tilde{A}}
		\leq \delta \mbox{ and }  \supp(x) \neq \supp(\tilde{x})\Big\rbrace.
	\end{align*}
\end{definition} 
The stability support is therefore the {\em distance to support change}.  
If $\stabsupp(y,A)>0$ then there exists $S\in{\mathbb B}^N$ such that 
$\Xi(y,A)=\{S\}$. Furthermore, for all pairs $(y',A')$ in a ball
(w.r.t.~the max distance) of radius $\stabsupp(y,A)$ around $(y,A)$
we have $\Xi(y',A')=\{S\}$. If, instead, $\stabsupp(y,A)=0$ then there are
arbitrarily small perturbations of $(y,A)$ which yield LASSO solutions with
different support. We use stability support to define the condition:

\begin{definition}\label{def:cond}
	For an input $(y,A)$ to UL feature selection we define the {\em condition
		number} $\condfsul(y,A)$ to be
	\begin{equation}\label{eq:CondDefinition}
		\condfsul(y,A) =
		\begin{cases} 
			(\stabsupp(y,A))^{-1} & \text{if } \stabsupp(y,A) \neq 0\\
			\infty & \text{otherwise.}
		\end{cases}
	\end{equation}
	The set $\Sigmul:=\{(y,A)\mid \stabsupp(y,A)=0\}$ is the
	{\em set of ill-posed inputs}. 
\end{definition}

To make this easier to work with, we define the following:
\begin{definition}\label{def:sigma1sigma2sigma3}
	For a pair $(y,A)$, we write (with the convention that if $M$ is
	non-invertible, $\nTwoTwo{M^{-1}}:=\infty$ and so $\nTwoTwo{M^{-1}}^{-1} = 0$),
	\begin{align*}
		\sigma_1(y,A)&:= \inf \lbrace t \, \vert \, \exists x \in \mUL(y,A)
		\text{ with } 
		\|A_{S^{\mathsf{c}}}^*(Ax-y)\|_{\infty}= \lambda/2 - t, S = \supp(x) \rbrace, \\
		\sigma_2(y,A)&:=   \inf \lbrace \nTwoTwo{(A^*_SA_S)^{-1}}^{-1} \, \vert \,
		\exists x \in \mUL(y,A) \text{ with } S = \supp(x) \rbrace, \\
		\sigma_3(y,A)&:= \inf \lbrace t \, \vert \, \exists
		i \in \lbrace 1,2,\dotsc,N \rbrace  
		\text{ and } x \in \mUL(y,A) \text{ such that }  0 < |x_i| \leq t \rbrace.
	\end{align*}
	where, for the empty-set $\varnothing$, we interpret
	$\|A^*_{\varnothing}(Ax-y)\|_{\infty} = 0$, we treat $A^*_\varnothing A_\varnothing$
	as invertible with $\nTwoTwo{(A^*_\varnothing A_\varnothing)^{-1}}^{-1} = \infty$,
	and we set $\inf \varnothing = \infty$. 
\end{definition}
We combine each of $\sigma_1, \sigma_2$ and $\sigma_3$ into a single
quantity as follows,
\[
\sigma(y,A) := \min\left\{\sigma_1(y,A),\sigma_2(y,A)^2,\sigma_3(y,A)\right\}
\] 
The next proposition provides a lower bound for $\stabsupp$ which makes use of the polynomial \eqref{eq:q}:
\begin{proposition}\label{proposition:rhofssigmalb}
	Set $\alpha = \nTrTwo{y,A}$ and 
	$\sigma = \sigma(y,A)$. Then
	$
	\stabsupp(y,A) \geq (mN)^{-\frac{1}{2}}\min\left\{\frac{\sigma^2}
	{q(\alpha,\sigma)},
	\frac{\sqrt{\sigma}}{6\alpha},\alpha \right\}.
	$
\end{proposition}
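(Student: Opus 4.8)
The plan is to establish the lower bound directly from Definition~\ref{def:stabSupp}: writing $r$ for the right-hand side of the asserted inequality, I will show that every pair $(\tilde y,\tilde A)$ with $\|\tilde y-y\|_\infty\le\delta$ and $\nmax{\tilde A-A}\le\delta$ for some $\delta<r$ satisfies $\Xi(\tilde y,\tilde A)=\{S\}$, where $S=\supp(x)$ for the minimiser $x$ of $\mUL(y,A)$; since also $\Xi(y,A)=\{S\}$, no such perturbation changes the support, so $\stabsupp(y,A)\ge r$. The inequality is vacuous when $\sigma=\sigma(y,A)=0$, so assume $\sigma>0$, hence $\sigma_1,\sigma_2,\sigma_3>0$. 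Now $\sigma_2>0$ makes $A_S^*A_S$ invertible for every minimiser with support $S$, and $\sigma_1>0$ makes $\|A_{S^{\mathsf c}}^*(Ax-y)\|_\infty<\lambda/2$ for every such minimiser; the combination of full column rank of $A_S$ with strict dual feasibility off $S$ is the classical sufficient condition for uniqueness of the LASSO minimiser, so $\mUL(y,A)=\{x\}$ and moreover $\sigma_1=\lambda/2-\|A_{S^{\mathsf c}}^*(Ax-y)\|_\infty$, $\sigma_2=$ the least eigenvalue of $A_S^*A_S$, and $\sigma_3=\min_{i\in S}|x_i|$. Put $\mathfrak s:=\sg(x_S)\in\{-1,1\}^S$, and note that the $S$-block of the optimality conditions reads $A_S^*(y-A_Sx_S)=\tfrac\lambda2\mathfrak s$ and that $\|x_S\|_2\le\|x\|_1\le\|y\|_2^2/\lambda\le\alpha^2/\lambda$ while $\|y-A_Sx_S\|_2=\|y-Ax\|_2\le\|y\|_2\le\alpha$.

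The second step builds a candidate minimiser for the perturbed problem. Since the third term $(mN)^{-1/2}\alpha$ of $r$ forces $\sqrt{mN}\,\delta<\alpha$, throughout the argument the perturbed norms stay within a factor $2$ of the originals (e.g.\ $\|\tilde A\|_2\le\|A\|_2+\sqrt{mN}\,\delta<2\alpha$). Define $\tilde x\in\real^N$ by $\tilde x_{S^{\mathsf c}}=0$ and let $\tilde x_S$ solve the restricted stationarity equation $\tilde A_S^*\tilde A_S\,\tilde x_S=\tilde A_S^*\tilde y-\tfrac\lambda2\mathfrak s$. For this to be well posed, note $\|\tilde A_S^*\tilde A_S-A_S^*A_S\|_2\le 3\alpha\sqrt{mN}\,\delta$ (expanding $\tilde A_S-A_S$); since $\sigma_2\ge\sqrt\sigma$, the second term $(mN)^{-1/2}\sqrt\sigma/(6\alpha)$ of $r$ is precisely what makes this smaller than $\sigma_2/2$, so $\tilde A_S^*\tilde A_S$ is invertible with $\|(\tilde A_S^*\tilde A_S)^{-1}\|_2\le 2/\sigma_2\le 2/\sqrt\sigma$.

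Third, I would estimate $\tilde x-x$ and verify the optimality conditions for $\mUL(\tilde y,\tilde A)$. Using the identity $A_S^*(y-A_Sx_S)=\tfrac\lambda2\mathfrak s$ one rewrites $\tilde x_S-x_S=(\tilde A_S^*\tilde A_S)^{-1}\big[\tilde A_S^*(\tilde y-\tilde A_Sx_S)-A_S^*(y-A_Sx_S)\big]$ and bounds the bracket by expanding $\tilde A_S-A_S$ and $\tilde y-y$ and invoking the estimates $\|x_S\|_2\le\alpha^2/\lambda$, $\|y-A_Sx_S\|_2\le\alpha$, $\|\mathfrak s\|_2\le\sqrt N$; the resulting monomials in $\alpha$, $1/\lambda$, $\sqrt N$ and $1/\sqrt\sigma$ are exactly those assembled into the polynomial $q(\alpha,\sigma)$ of \eqref{eq:q}, so the first term $(mN)^{-1/2}\sigma^2/q(\alpha,\sigma)$ of $r$ is the threshold below which both of the following hold: (a) $\|\tilde x_S-x_S\|_\infty<\sigma_3=\min_{i\in S}|x_i|$, whence $\sg(\tilde x_S)=\mathfrak s$ and in particular $\supp(\tilde x)=S$; and (b) $\|\tilde A_{S^{\mathsf c}}^*(\tilde A\tilde x-\tilde y)-A_{S^{\mathsf c}}^*(Ax-y)\|_\infty<\sigma_1$ --- obtained from another triangle-inequality expansion controlled by $\delta$ and $\|\tilde x_S-x_S\|_2$ --- so that $\|\tilde A_{S^{\mathsf c}}^*(\tilde A\tilde x-\tilde y)\|_\infty<\lambda/2$ strictly. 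By construction $\tilde x$ also satisfies the stationarity equation on $S$ with the signs $\mathfrak s=\sg(\tilde x_S)$, so $\tilde x$ meets the subdifferential optimality conditions of the convex problem $\mUL(\tilde y,\tilde A)$ and hence $\tilde x\in\mUL(\tilde y,\tilde A)$; strict dual feasibility together with full column rank of $\tilde A_S$ then forces uniqueness, giving $\Xi(\tilde y,\tilde A)=\{S\}$, which completes the argument.

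The main obstacle is the bookkeeping hidden in the third step: one must check that the three requirements --- the invertibility margin, sign preservation, and strict dual feasibility off $S$ --- are all met once $\delta$ is below the stated minimum, and, more delicately, that the powers of $\sigma$ and $\alpha$ and the numerical constants produced by the two perturbation expansions collapse exactly into $q(\alpha,\sigma)$ as in \eqref{eq:q} rather than into something larger. This forces careful use of $\alpha\ge1$ and of the a priori inequalities $\sigma\le\sigma_2^2$, $\sigma\le\sigma_1\le\lambda/2$, $\sigma\le\sigma_3$ to absorb lower-order terms, and a careful tracking of how $\|(\tilde A_S^*\tilde A_S)^{-1}\|_2$, $\|x_S\|_2$ and the residual $y-Ax$ propagate through the two optimality checks; it is also the reason the extra term $(mN)^{-1/2}\alpha$ appears inside the minimum, since the whole estimate runs under the standing assumption that the perturbation has not inflated the norms of $(\tilde y,\tilde A)$.
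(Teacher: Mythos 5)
You should first note that this paper does not actually prove Proposition \ref{proposition:rhofssigmalb}: Section 5 explicitly recalls it from Part I ``without proofs'', so there is no in-paper argument to compare against. That said, your strategy is unmistakably the intended one --- the quantities $\sigma_1,\sigma_2,\sigma_3$ and the polynomial $q$ are manifestly designed for exactly the perturbation argument you describe (uniqueness from strict dual feasibility plus injectivity of $A_S$, construction of the perturbed minimiser from the restricted normal equations $\tilde A_S^*\tilde A_S\tilde x_S=\tilde A_S^*\tilde y-\tfrac{\lambda}{2}\mathfrak{s}$, invertibility margin from the $\sqrt{\sigma}/(6\alpha)$ term, sign preservation against $\sigma_3$, dual feasibility against $\sigma_1$). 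Your preliminary estimates ($\|x\|_1\le\alpha^2/\lambda$, $\|y-Ax\|_2\le\alpha$, $\|\tilde A_S^*\tilde A_S-A_S^*A_S\|_2\le 3\alpha\sqrt{mN}\delta$, $\|(\tilde A_S^*\tilde A_S)^{-1}\|_2\le 2/\sqrt{\sigma}$) are all correct, as is the uniqueness argument via the equicorrelation set.

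The gap is that the decisive step is asserted rather than performed, and for a proposition whose entire content is a quantitative bound with specific constants, that assertion is not self-evidently true. Concretely: when one expands $\tilde x_S-x_S$ and the off-support correlations, the resulting monomials must be shown to be dominated, \emph{for every} admissible $\sigma$ and every $\alpha\ge1$, by the particular combination $96\alpha^5+12\alpha^3(1+\lambda\sqrt N)\sqrt\sigma+\sigma(2\alpha^3/\lambda+3\alpha)$ after dividing $\sigma^2$ by it. A naive accounting does not obviously close: for instance, the term $\tilde A_S^*(A_S-\tilde A_S)x_S$ contributes roughly $4\alpha^3\sqrt{mN}\,\delta/(\lambda\sqrt\sigma)$ to $\|\tilde x_S-x_S\|$, and requiring this to be below $\sigma_3\ge\sigma$ forces $\delta\sqrt{mN}\lesssim\lambda\sigma^{3/2}/\alpha^3$, whereas $\sigma^2/q(\alpha,\sigma)$ compared against the summand $2\sigma\alpha^3/\lambda$ of $q$ only yields $\delta\sqrt{mN}\le\lambda\sigma/(2\alpha^3)$, which is the \emph{weaker} constraint when $\sigma<4$; one must therefore invoke additional a priori relations (e.g.\ $\sigma\le\sigma_1\le\lambda/2$, $\alpha\ge1$, or a different grouping of the expansion) to make the comparison go through, and your sketch does not exhibit which ones. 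Until that bookkeeping is carried out, the proof establishes a bound of the stated \emph{form} but not the stated inequality. Two smaller omissions: the case $S=\varnothing$ (where $x=0$, $\sigma_2=\sigma_3=\infty$ and the argument degenerates to showing $\|\tilde A^*\tilde y\|_\infty<\lambda/2$ persists) is not treated, and you should make explicit that the definition of $\stabsupp$ requires controlling \emph{all} minimisers of \emph{all} perturbed problems within radius $\delta$, which your uniqueness claims do supply but which deserves a sentence.
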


\section{Proof of Theorem~\ref{thm:Prob1DCase}}

To prove Theorem~\ref{thm:Prob1DCase}, we first relate the condition
number to some quantities that will be easier to deal with.
Specifically, we define the event $Z$ and the quantity $\delta$ as follows.

\begin{definition}
For pairs $(y,A)\in\real^{1+N}$ and $\epsilon > 0$, we define
$Z(\epsilon) =Z(y,A,\epsilon)$ 
to be the following event: if
$\|(y,A)-(\tilde{y},\tilde{A})\|_{\max}\le\epsilon$, then 
$0\not\in\mUL(\tilde{y},\tilde{A})$.

\end{definition}

\begin{definition}
For a vector $A\in\real^N$ we let $\delta\geq 0$
denote the difference between the largest entry of $|A|$ and the
second largest entry of $|A|$, where $|A| \in \real^N$ is
the vector with entries $|A_i|$ for $i=1,\ldots,N$.
\end{definition}

The proof of Theorem~\ref{thm:Prob1DCase} easily follows from
the following four lemmas, the second of
which relates the condition number to $Z$ and $\delta$.

\begin{lemma}\label{lemma:1DZeroSupportRequirement}
Let $A \in \real^{N}$ and $i \in \{1,2,\dotsc,N\}$ be such that
$|A_{i}| > |A_{j}|$ for all $j \in \{1,2,\dotsc,N\}$ with $j\ne i$.
Let $y\in\real$ and $x \in \mUL(y,A)$. Then $\supp(x) = \{i\}$ if
$|A_{i}y| > \lambda/2$ and $\supp(x) = \varnothing$
(i.e. $x= 0$) if $|A_{i}y| \leq \lambda/2$.
\end{lemma}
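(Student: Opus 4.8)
The plan is to prove Lemma~\ref{lemma:1DZeroSupportRequirement} by direct analysis of the scalar one-measurement LASSO objective. Write $A\in\real^N$ with a strict maximum-modulus coordinate $i$, and consider $f(x)=\|Ax-y\|_2^2+\lambda\|x\|_1=(A\cdot x - y)^2+\lambda\sum_j|x_j|$. Since $m=1$, $Ax$ is the scalar $\sum_j A_j x_j$, so the smooth part depends on $x$ only through the linear functional $A\cdot x$; the $\ell^1$ penalty is the only thing that separates coordinates. The first observation I would make is that any minimiser must be supported on coordinates of maximal modulus: if $x$ has a nonzero entry $x_k$ with $|A_k|<|A_i|$, I can move that mass onto coordinate $i$ — replace $x_k$ by $0$ and add $\mathrm{sgn}(x_i\text{-ish})\,(A_k/A_i)x_k$ to $x_i$ so that $A\cdot x$ is unchanged — which strictly decreases $\|x\|_1$ because $|A_k/A_i|<1$, contradicting optimality. (One has to be slightly careful to phrase the transfer so it genuinely reduces the $\ell^1$ norm; since $|A_k x_k| > |A_k/A_i|\,|A_k x_k|$... more simply, the new $i$-entry has modulus at most $|x_i|+|A_k/A_i||x_k| < |x_i|+|x_k|$, strict because $|A_k/A_i|<1$.) Hence $\supp(x)\subseteq\{i\}$, i.e. $x=t e_i$ for some $t\in\real$.

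Having reduced to the one-dimensional problem $g(t)=(A_i t - y)^2+\lambda|t|$, I would just minimise this scalar function explicitly. Its minimiser is the soft-thresholding formula: setting the derivative (away from $0$) to zero gives $t = (y/A_i) - \frac{\lambda}{2A_i}\mathrm{sgn}(t)$, and comparing the candidate value at $t\neq 0$ with the value $g(0)=y^2$ shows that the optimum is $t=0$ precisely when $2|A_i y|\le\lambda$, i.e. $|A_i y|\le\lambda/2$, and is a unique nonzero $t=\mathrm{sgn}(A_i y)(|y/A_i|-\lambda/(2A_i^2))$ with $|t|>0$ when $|A_i y|>\lambda/2$. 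This gives $\supp(x)=\varnothing$ in the first case and $\supp(x)=\{i\}$ in the second, which is exactly the claim. The boundary case $|A_i y|=\lambda/2$ needs checking that $t=0$ is (the unique) minimiser there — it is, by strict convexity of $g$ on each half-line and a direct comparison.

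The main obstacle — really the only subtlety — is the ``mass-transfer'' step: one must ensure that redistributing a nonzero off-coordinate onto coordinate $i$ keeps $A\cdot x$ fixed while \emph{strictly} decreasing $\|x\|_1$, and handle the edge case where coordinate $i$ is already nonzero with a sign that might partially cancel. Choosing the sign of the increment to coordinate $i$ correctly (add $\frac{A_k x_k}{A_i}$, so the contribution $A_i\cdot\frac{A_k x_k}{A_i}=A_k x_k$ exactly replaces the removed term) makes $A\cdot x$ invariant automatically, and then $|x_i + A_k x_k/A_i| \le |x_i| + |A_k/A_i|\,|x_k| < |x_i|+|x_k|$ since $|A_k|<|A_i|$ and $x_k\neq 0$, giving the strict decrease regardless of signs. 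Everything else is elementary single-variable calculus/convex analysis on $g$, so I would not belabour it.
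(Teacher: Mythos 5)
Your proof is correct, but it takes a genuinely different route from the paper's. The paper argues entirely through the KKT machinery imported from Part~I: it uses the optimality condition (UL5) to show that $0$ solves the problem exactly when $|A_iy|\le\lambda/2$, that every $j\in\supp(x)$ lies in the equicorrelation set (forcing $|A_j|\,|Ax-y|=\lambda/2$, hence all supported coordinates share the same $|A_j|$, hence $\supp(x)\subseteq\{i\}$), and then invokes the fact (UL4) that all LASSO minimisers have the same $\ell^1$ norm to conclude $x=0$ is the \emph{unique} solution in the subcritical case. You instead give a self-contained variational argument: the mass-transfer step (move $x_k$ onto coordinate $i$ via the increment $A_kx_k/A_i$, preserving $A\cdot x$ and strictly shrinking $\|x\|_1$ because $|A_k/A_i|<1$) reduces everything to the scalar problem $g(t)=(A_it-y)^2+\lambda|t|$, which you solve by soft-thresholding. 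Your version buys independence from Part~I's lemmas and makes the uniqueness of the minimiser transparent (strict convexity of $g$ when $A_i\neq0$); the paper's version is shorter given that the KKT facts are already on the table and generalises more readily beyond $m=1$. The only loose ends in your write-up are cosmetic: the degenerate case $A_i=0$ (possible only when $N=1$) should be dispatched separately since your threshold formula divides by $A_i$ — there the objective is $y^2+\lambda|t|$, minimised at $0$, consistent with $|A_iy|=0\le\lambda/2$ — and one should say a word about why the minimum of $f$ is attained (coercivity from the $\lambda\|x\|_1$ term), so that the reduction to the line $\{te_i\}$ really does capture all minimisers.
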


\begin{lemma}\label{lemma:ZAndDeltaConditionRelation}
Let $A\in\real^N$ be randomly drawn from an absolutely continuous
distribution with respect to the Lebesgue measure in $\real^N$.
Let $y\in\real$ and $\epsilon > 0$. Then, 
$\probab[Z(\epsilon) \cap (\xul < \epsilon)] =
\probab[Z(\epsilon) \cap (\delta < 2\epsilon)]$.
\end{lemma}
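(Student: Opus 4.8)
The plan is to show that, \emph{conditioned on} the ``tie-free'' event that $|A|$ has a unique largest entry (which has probability $1$ by absolute continuity), the event $Z(\epsilon)$ forces the perturbation analysis to reduce to a single scalar inequality involving only the gap $\delta$ between the top two entries of $|A|$. First I would record the measure-zero exceptional sets that can be discarded: the set where two entries of $|A|$ coincide, and --- because Lemma~\ref{lemma:1DZeroSupportRequirement} is used with strict inequalities --- the set where $|A_iy|=\lambda/2$ for the maximising index $i$. On the complement of these sets, $\mUL(y,A)$ has a well-defined support determined by $y$ and the largest coordinate of $|A|$.

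The core step is a deterministic identity: for $(y,A)$ in the tie-free set, I claim
\[
 Z(\epsilon)\ \text{holds}\quad\Longleftrightarrow\quad \delta\ge 2\epsilon\ \text{or}\ \bigl(\text{the common sign/magnitude test for }0\in\mUL\text{ survives all }\epsilon\text{-perturbations}\bigr),
\]
and more usefully, that $Z(\epsilon)^{\mathsf c}\cap(\delta<2\epsilon)$ and $Z(\epsilon)^{\mathsf c}\cap(\xul<\epsilon)$ differ only by a null set, while on $\{\delta\ge 2\epsilon\}$ the two events $Z(\epsilon)\cap(\xul<\epsilon)$ and $Z(\epsilon)\cap(\delta<2\epsilon)$ are \emph{both} governed by the same additional constraint. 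Concretely: by Lemma~\ref{lemma:1DZeroSupportRequirement}, whether $0\in\mUL(\tilde y,\tilde A)$ for a perturbation depends only on whether $|\tilde A_{i(\tilde A)}\,\tilde y|\le\lambda/2$ where $i(\tilde A)$ indexes the largest entry of $|\tilde A|$; and a support \emph{change} from a tie-free base point to a perturbation within max-distance $\epsilon$ can occur in exactly two ways --- either the identity of the top index changes (which is possible within radius $\epsilon$ iff $\delta<2\epsilon$), or the top index is stable but the scalar test $|A_iy|\lessgtr\lambda/2$ flips (a condition depending on $|A_i|$ and $y$ but not on $\delta$). I would show that the ``top index changes'' branch is precisely what distinguishes $\xul<\epsilon$ from the scalar-only obstruction, so that intersecting with $Z(\epsilon)$ --- which already excludes $0$ from every nearby solution set and hence neutralises the scalar-flip branch's effect on the equality --- makes $\{\xul<\epsilon\}$ and $\{\delta<2\epsilon\}$ coincide up to measure zero. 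Taking probabilities then gives the claim.

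The main obstacle I expect is bookkeeping the case analysis at the boundary: one must be careful that $\xul(y,A)<\epsilon$ is genuinely equivalent, on $Z(\epsilon)$ and off the null sets, to ``there is an $\epsilon$-perturbation that either swaps the leading index or is a $0$-vs-nonzero flip'', and that the $Z(\epsilon)$ hypothesis removes exactly the second alternative so that only the $\delta<2\epsilon$ geometric condition remains. A secondary subtlety is that $\xul$ is an infimum (distance to support change), so strict vs.\ non-strict inequalities ($\xul<\epsilon$ vs.\ $\le$, $\delta<2\epsilon$ vs.\ $\le$) must be matched against the strict inequalities in Lemma~\ref{lemma:1DZeroSupportRequirement}; this is where absolute continuity is invoked a final time to assert the offending boundary configurations carry no mass. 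Once the deterministic equivalence-up-to-null-sets is in hand, the probabilistic statement $\probab[Z(\epsilon)\cap(\xul<\epsilon)]=\probab[Z(\epsilon)\cap(\delta<2\epsilon)]$ is immediate.
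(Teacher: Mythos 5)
Your proposal is correct and follows essentially the same route as the paper: both arguments rest on Lemma~\ref{lemma:1DZeroSupportRequirement} to reduce support identification to the largest entry of $|A|$, use $Z(\epsilon)$ to rule out the zero/nonzero flip so that a support change within an $\epsilon$-perturbation is equivalent (up to null sets from ties and boundary cases) to the top index being overtaken, which happens precisely when the gap satisfies $\delta\lessgtr 2\epsilon$. The paper merely packages this as two complementary inclusions, $\{Z(\epsilon),\xul<\epsilon\}\subseteq\{Z(\epsilon),\delta\le 2\epsilon\}$ and $\{Z(\epsilon),\xul\ge\epsilon\}\subseteq\{Z(\epsilon),\delta\ge 2\epsilon\}$ (the latter via the explicit perturbation shifting $|A_i|$ down and $|A_j|$ up by $\epsilon$), which is the same content as your equality-up-to-null-sets.
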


Thus to understand $\xul$ it suffices to analyse $Z$ and
$\delta$. This is what we do in the next two lemmas under the
assumptions that $A$ is exponentially, gaussian, or uniformly
distributed.
		
\begin{lemma}\label{lemma:1DZ}
Let $F$ be the cumulative distribution of a non-negative random variable $X$
which is absolutely continuous with respect to the Lebesgue measure
in $\real$ and $c>0$ be such that $F(c)<1$. 
Suppose that $A^{(N)} \in \real^N$ is a random vector with i.i.d.~entries
such that $\big|A^{(N)}_i\big|$ is distributed as $X$. 
Let $y\in\real$ and $(b_N)_{N=1}^{\infty}$ be a sequence of
non-negative reals satisfying that $b_N < |y|$ and
$\lambda/(2|y|-2b_N) + b_N \leq c$ for all $N$ sufficiently large. Then 
$\lim_{N \to \infty} \probab(Z(y,A^{(N)},b_N)) = 1$.

In particular, for all $y\in\real$, and $b>0$
and all random $A\in\real^N$,
\begin{enumerate}[label={\rm (\arabic*)}]
\item
$\lim_{N \to \infty}\probab(Z(y,A,b)) = 1$ for $b < |y|$ and
$\lim_{N \to \infty}\probab(Z(y,A,b)) =0$ for $b \geq |y|$, if
each entry of $A$ is exponentially distributed with
parameter~$1$.\label{item:ZForExpA}
\item
$\lim_{N \to \infty} \probab(Z(y,A,b\sigma/\sqrt{2\ln(N)}))= 1$, if
$y\neq 0$ and each entry of $A$ is Gaussian with mean $1$, variance $\sigma^2$.
\label{item:ZForGaussian}
\item
$\lim_{N \to \infty} \probab(Z(y,A,b/N)) =1$, if $|y| > \lambda/2$ and each
entry of $A$ is uniformly distributed on $[0,1]$.\label{item:ZForUniform}
\end{enumerate}
\end{lemma}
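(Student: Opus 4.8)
The plan is to collapse the event $Z(y,A^{(N)},b_N)$ to a single explicit inequality on $\|A^{(N)}\|_\infty$ and then read off a tail bound for the maximum of $N$ i.i.d.\ copies of $X$. \emph{Step 1 (when is $0$ a minimiser).} For $(\tilde y,\tilde A)\in\real^{1+N}$ the LASSO objective $x\mapsto\|\tilde A x-\tilde y\|_2^2+\lambda\|x\|_1$ is convex and finite everywhere, so $0$ is a minimiser exactly when $0$ lies in its subdifferential at $0$, i.e.\ $0\in -2\tilde A^{\mathrm T}\tilde y+\lambda[-1,1]^N$; since $\tilde A$ is a row this says $\|\tilde A\|_\infty\,|\tilde y|\le\lambda/2$ (this refines Lemma~\ref{lemma:1DZeroSupportRequirement} and removes any need to worry about ties among the entries of $\tilde A$). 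Hence $Z(y,A^{(N)},b_N)$ holds iff the minimum of $\|\tilde A\|_\infty\,|\tilde y|$ over all $\tilde y,\tilde A$ with $|y-\tilde y|\le b_N$ and $\nmax{A^{(N)}-\tilde A}\le b_N$ exceeds $\lambda/2$.

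\emph{Step 2 (worst-case perturbation).} The perturbations of $y$ and of $A^{(N)}$ are independent, so the minimum factors; shrinking each coordinate of $\tilde A$ (resp.\ $\tilde y$) towards $0$ gives $\min_{\tilde A}\|\tilde A\|_\infty=(\|A^{(N)}\|_\infty-b_N)_+$ and $\min_{\tilde y}|\tilde y|=(|y|-b_N)_+$, both attained. Since $b_N<|y|$ the second factor equals $|y|-b_N>0$, so after a short rearrangement $Z(y,A^{(N)},b_N)$ holds if and only if
\[
\|A^{(N)}\|_\infty>\frac{\lambda}{2|y|-2b_N}+b_N .
\]
\emph{Step 3 (probability).} For $N$ large the hypothesis gives $\tfrac{\lambda}{2|y|-2b_N}+b_N\le c$, so $\probab\big(Z(y,A^{(N)},b_N)\big)\ge\probab\big(\|A^{(N)}\|_\infty>c\big)=1-F(c)^N$, which tends to $1$ because $F(c)<1$ (using that $\|A^{(N)}\|_\infty=\max_{i\le N}|A^{(N)}_i|$ with the $|A^{(N)}_i|$ i.i.d.\ distributed as $X$). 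This is the main assertion.

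\emph{Step 4 (the three cases).} Each instance only needs a legal choice of $c$. For the exponential law with $b_N=b$ and $b<|y|$, take $c=\tfrac{\lambda}{2|y|-2b}+b<\infty$; the exponential distribution has full support on $[0,\infty)$ so $F(c)<1$ and Step~3 gives limit $1$. For $b\ge|y|$ the admissible perturbation $\tilde y=0$ has $\|\tilde A\|_\infty|\tilde y|=0\le\lambda/2$, hence $0\in\mUL(\tilde y,\tilde A)$, so $Z$ fails for every realisation and $\probab(Z(y,A,b))=0$ for all $N$, giving limit $0$. For the Gaussian case ($y\ne0$, $b_N=b\sigma/\sqrt{2\ln N}\to0$) one has $\tfrac{\lambda}{2|y|-2b_N}+b_N\to\tfrac{\lambda}{2|y|}$, hence $\le c:=\tfrac{\lambda}{2|y|}+1$ for $N$ large, with $F(c)<1$ since $|\mcN(1,\sigma^2)|$ has unbounded support. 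For the uniform case ($|y|>\lambda/2$, $b_N=b/N\to0$) one has $\tfrac{\lambda}{2|y|-2b_N}+b_N\to\tfrac{\lambda}{2|y|}<1$, hence $\le c:=\tfrac12\big(\tfrac{\lambda}{2|y|}+1\big)<1$ for $N$ large, with $F(c)=c<1$.

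The only step that needs genuine care is Step~2: verifying that the minimiser of $\|\tilde A\|_\infty|\tilde y|$ over the $\nmax{\cdot}$-ball really is the coordinatewise shrink-to-zero perturbation and that the two minimisations decouple (both are elementary but must be stated cleanly, and the strict-vs-nonstrict inequality in $(\|A^{(N)}\|_\infty-b_N)_+>\tfrac{\lambda}{2(|y|-b_N)}$ should be tracked). Once $Z(y,A^{(N)},b_N)$ is identified with the clean event $\{\|A^{(N)}\|_\infty>\lambda/(2|y|-2b_N)+b_N\}$, everything else reduces to the one-line identity $\probab(\max_{i\le N}|A^{(N)}_i|\le c)=F(c)^N$ together with the stated hypotheses on $c$ and $b_N$.
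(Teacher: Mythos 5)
Your proof is correct and follows essentially the same route as the paper's: reduce the event $Z(y,A^{(N)},b_N)$ to the inequality $\|A^{(N)}\|_\infty>\lambda/(2|y|-2b_N)+b_N$ via the criterion ``$0\in\mUL(\tilde y,\tilde A)$ iff $\|\tilde A\|_\infty|\tilde y|\le\lambda/2$'', then bound $\probab(Z)\ge 1-F(c)^N\to1$, handling the three cases by choosing admissible constants $c$ (yours differ from the paper's but work equally well, and your $\tilde y=0$ argument for $b\ge|y|$ is the paper's). The only cosmetic differences are that you derive the zero-minimiser criterion directly from the subdifferential rather than citing Lemma~\ref{lemma:1DZeroSupportRequirement}, and you establish an exact characterisation of $Z$ where the paper only needs one implication.
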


\begin{lemma}\label{lemma:1DDelta}
Suppose that $A \in \real^N$ is a random vector with i.i.d.~entries.
Then for each $t >0$ 
\begin{enumerate}[label={\rm (\arabic*)}]
\item
$\lim_{N \to \infty} \probab(\delta > t) = \rme^{-t}$
if each entry of $A$ is exponentially distributed.
\item
$\lim_{N \to \infty}  \probab(\delta > t\sigma/\sqrt{2\ln(N)}) = \rme^{-t}$ if each
entry of $A$ is Gaussian.
\item
$\lim_{N \to \infty} \probab(\delta > t/N) = \rme^{-t}$ if each entry of $A$
is uniformly distributed on $[0,1]$.
\end{enumerate}
\end{lemma}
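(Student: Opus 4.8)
The plan is to handle the three distributions separately, though a single picture underlies all of them: $\delta$ is the gap $X_{(N)}-X_{(N-1)}$ between the two largest order statistics of the i.i.d.\ sample $X_i:=|A_i|$, and under the natural affine (resp.\ linear) rescaling this gap converges to the difference of the two largest atoms of a limiting Poisson point process; that difference is an $\mathrm{Exp}(1)$ variable, which is exactly the survival function $\rme^{-t}$ appearing in the statement. Two of the three cases are cheaper to do by elementary means and even yield exact pre-limit formulas.

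\emph{Exponential and uniform cases.} When the entries are exponential, $X_i=A_i\ge0$, and Rényi's representation writes the order statistics as $A_{(k)}=\sum_{j=1}^{k}Z_j/(N-j+1)$ with $Z_j$ i.i.d.\ $\mathrm{Exp}(1)$; hence $\delta=A_{(N)}-A_{(N-1)}=Z_N\sim\mathrm{Exp}(1)$ exactly, so $\probab(\delta>t)=\rme^{-t}$ for every $N\ge2$. When the entries are uniform on $[0,1]$, $X_i=A_i\in[0,1]$ and the joint density of $(A_{(N-1)},A_{(N)})$ on $\{0<u<v<1\}$ equals $N(N-1)u^{N-2}$; integrating over $\{v-u>c\}$ gives $\probab(\delta>c)=(1-c)^N$, so with $c=t/N$ we obtain $(1-t/N)^N\to\rme^{-t}$.

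\emph{Gaussian case.} Write $A_i=\mu+\sigma G_i$ with $G_i$ i.i.d.\ standard normal and $\mu\neq0$ (the case $\mu=0$ reduces to the half-normal sample $|G_i|$, to which the argument below applies with the same leading-order constants). First I replace $|A_i|$ by $A_i$: with $a_N\sim\sqrt{2\ln N}$ and $b_N=1/\sqrt{2\ln N}$ the classical Gaussian extremal constants, one has $\max_iA_i-\sigma a_N\to\mu$, $A_{(N-1)}-\sigma a_N\to\mu$ and $-\min_iA_i-\sigma a_N\to-\mu$ in probability, hence $A_{(N-1)}-(-\min_iA_i)\to2\mu$ in probability, so with probability tending to $1$ the two largest of $|A_i|$ are $A_{(N)}$ and $A_{(N-1)}$; thus $\delta=\sigma(G_{(N)}-G_{(N-1)})$ off an event of vanishing probability. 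It remains to show $\sqrt{2\ln N}\,(G_{(N)}-G_{(N-1)})\Rightarrow\mathrm{Exp}(1)$. For this I invoke the standard weak convergence of the empirical point process $\sum_i\delta_{b_N^{-1}(G_i-a_N)}$ to a Poisson process on $\real$ with intensity $\rme^{-x}\rmd x$, which gives that $\bigl(b_N^{-1}(G_{(N)}-a_N),\,b_N^{-1}(G_{(N-1)}-a_N)\bigr)$ converges in distribution to the pair of largest atoms $(\xi_1,\xi_2)$, admitting the representation $\xi_1=-\ln E_1$, $\xi_2=-\ln(E_1+E_2)$ with $E_1,E_2$ i.i.d.\ $\mathrm{Exp}(1)$. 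Hence $b_N^{-1}(G_{(N)}-G_{(N-1)})\Rightarrow\xi_1-\xi_2=\ln(1+E_2/E_1)$, and since $\probab(E_2/E_1>u)=1/(1+u)$ for a ratio of two independent unit exponentials, $\probab(\xi_1-\xi_2>t)=\rme^{-t}$. Combining with the reduction, $\probab(\delta>t\sigma/\sqrt{2\ln N})\to\rme^{-t}$.

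\emph{Main obstacle.} The substantive work is confined to the Gaussian case: identifying the order statistics of $|A_i|$ with those of $A_i$, and pinning down the joint limit of the top two Gaussian order statistics. If one prefers not to cite point-process machinery, a self-contained substitute is a Laplace-type asymptotic analysis of the exact identity $\probab(G_{(N)}>G_{(N-1)}+s)=N\int_{\real}\phi(x)\Phi(x-s)^{N-1}\rmd x$ with $s=t/\sqrt{2\ln N}$, using Mills-ratio estimates; this is elementary but more computational. The exponential and uniform cases fit the same Poisson-process template (Gumbel, resp.\ reversed-Weibull, domain of attraction), but the exact formulas above are shorter and make the conclusion immediate.
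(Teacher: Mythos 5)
Your proposal is correct, and for the two easy cases it lands on exactly the same exact pre-limit formulas as the paper ($\probab(\delta>t)=\rme^{-t}$ for every $N$ in the exponential case, $\probab(\delta>t/N)=(1-t/N)^N$ in the uniform case), just derived differently: you use R\'enyi's representation and the joint density of the top two order statistics, whereas the paper integrates the single spacing formula $\probab(\delta<t)=N(N-1)\int(F(u))^{N-2}f(u)(F(u+t)-F(u))\,\mathrm{d}u$ by parts. The genuine divergence is the Gaussian case. The paper stays with the half-normal density $f(u)=\sqrt{2/\pi}\,\rme^{-(u^2+\mu^2)/2}\cosh(\mu u)$, performs two integrations by parts, and controls the resulting integrals via Mills-ratio bounds on $\erf$ and dominated convergence --- self-contained but long, and the $\cosh(\mu u)$ factors have to be carried throughout. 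You instead first reduce $|A_i|$ to $A_i$ by the in-probability separation $A_{(N-1)}-(-\min_iA_i)\to2\mu$ (a correct and clean way to dispose of the absolute value when $\mu\neq0$), and then invoke the joint convergence of the rescaled top two order statistics to the top two atoms of a Poisson process with intensity $\rme^{-x}\,\mathrm{d}x$, whose gap is $\ln(1+E_2/E_1)$ and hence exactly $\mathrm{Exp}(1)$. This buys brevity and makes transparent why $\rme^{-t}$ appears in all three cases (it is the universal Poisson-process gap), at the cost of citing a nontrivial external result (the joint $k$-largest limit theorem) and of treating the $\mu=0$ half-normal case somewhat tersely --- though your remark is right that its auxiliary function is asymptotically the same $1/\sqrt{2\ln N}$, so the conclusion is unaffected. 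Either route is a valid proof of the lemma.
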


Assuming (for now) that these Lemmas hold, we proceed to prove
Theorem~\ref{thm:Prob1DCase}.

\begin{proof}[Proof of Theorem~\ref{thm:Prob1DCase}]
For any sequence of positive real valued functions
$(f_n)_{n=1}^{\infty}$, we split $\probab(\xul < f_N(t))
= \probab[(\xul < f_N(t)) \cap Z(f_N(t))] + \probab[(\xul <
f_N(t)) \cap Z(f_N(t))^{\mathsf{c}}]$. Using 
Lemma~\ref{lemma:ZAndDeltaConditionRelation}
with $\epsilon=f_N(t)$ on the first term
this splitting becomes 
\begin{equation}\label{eq:rhoSplittingDeltaZ}
  \probab(\xul < f_N(t)) = \probab[(\delta < 2f_N(t)) \cap Z(f_N(t))]
  + \probab[(\xul < f_N(t)) \cap Z(f_N(t))^{\mathsf{c}}]
\end{equation}
Now take $f_N(t)=t$ when the entries of $A$ are exponentially
distributed and $t<|y|$, $f_N(t)=\frac{t\sigma}{\sqrt{2\ln N}}$ if they are Gaussian,
and $f_N(t)=\frac{t}{N}$ if they are uniformly distributed on $[0,1]$.
In the three cases Lemma~\ref{lemma:1DZ} shows that
$\probab(Z(f_N(t)))\to 1$ when $N\to\infty$. Consequently, in all three cases 
we have
\[
  \lim_{N \to \infty} \probab(\xul < f_N(t))
  = \lim_{N \to \infty} \probab[(\delta < 2f_N(t)) \cap Z(f_N(t))]
  = \lim_{N \to \infty} \probab[\delta < 2f_N(t)].
\]
We can then apply Lemma~\ref{lemma:1DDelta} to deduce that
\begin{itemize}
\item
$\lim_{N \to \infty} \probab(\xul < t) = 1 - \rme^{-2t}$ for $t < |y|$
if each entry of $A$ is exponentially distributed with parameter~$1$.
\item
$\lim_{N \to \infty}  \probab(\xul < t/\sqrt{2\ln(N)}) = 1-\rme^{-2t/\sigma}$ if
each entry of $A$ is a standard Gaussian. 
\item
$\lim_{N \to \infty} \probab(\xul < t/N) = 1 - \rme^{-2t}$ if $|y| > \lambda/2$
and each entry of $A$ is  uniformly distributed on $[0,1]$.
\end{itemize}

It only remains to deal with $\probab(\xul < t)$ when
$t \geq |y|$ and each entry of $A$ is exponentially distributed with
parameter $1$. We start by examining $\probab[(\xul \geq t) \cap
Z(t)^{\mathsf{c}}]$ in this setting.
			
Assume that $Z(t)^{\mathsf{c}}$ occurs. Then $0 \in \mUL(\tilde{y},\tilde{A})$
for some perturbation $(\tilde{y},\tilde{A})$
of $(y,A)$ satisfying that
$d[(\tilde{y},\tilde{A}),(y,A)]_\infty < t$. If, in addition, 
$\xul \geq t$ then $0 \in \mUL(y,A)$ as well. 
Now, Lemma~\ref{lemma:1DZeroSupportRequirement} and
the fact that $|y| > 0$  show that this occurs
if and only $\max_{i=1,2,\ldots,N}
|A_i| \leq \lambda/(2|y|)$. Since the
entries of $A$ are independent, we conclude that 
$$
\probab (0 \in \mUL(y,A))
= \Pi_{i=1}^{N}\probab(|A_i| \leq \lambda/(2|y|))
= [1-e{-\lambda/(2y)}]^{N}
$$
a quantity tending to 0 as $N \to\infty$. Thus
$\lim_{N \to \infty} \probab[(\xul \geq t) \cap Z(t)^{\mathsf{c}}]
\leq \lim_{N \to \infty} \probab(0 \in \mUL(y,A))= 0$.
			
This last limit implies that
$\lim_{N \to \infty} \probab[(\xul <t) \,\cap \,Z(t)^{\mathsf{c}}]
= \lim_{N \to \infty} \probab(Z(t)^{\mathsf{c}})$.
As $t\geq|y|$, Lemma~\ref{lemma:1DZ} part (1) shows that
$\lim_{N \to \infty} \probab(Z(t)^{\mathsf{c}})=1$.
Hence $\lim_{N \to \infty} \probab(\xul <t) \geq \lim_{N \to \infty}
\probab[(\xul < t) \cap Z(t)^{\mathsf{c}}]
= \lim_{N \to \infty} \probab(Z(t)^{\mathsf{c}})=1$,
completing the proof.
\end{proof}

We now prove Lemmas~\ref{lemma:1DZeroSupportRequirement} to~\ref{lemma:1DDelta}.

\begin{proof}[Proof of Lemma~\ref{lemma:1DZeroSupportRequirement}]
The proof follows from the KKT conditions (UL5)
in Section~I.4.
Suppose firstly that $|A_{i}y| > \lambda/2$. Then $0$ is not
a solution to the lasso problem. Indeed, if it were the KKT conditions
would imply that $|A_i y|= |(A^*(A0-y))_i| =\lambda/2$, contradicting
our assumption. Moreover for all $j \in \supp(x)$ we have that 
$j$ is in the equicorrelation set and thus
$|A_j||(Ax-y)| = |A^*_j(Ax-y)| = \lambda/2 $. This means that the value
of $|A_j|$ is the same for all $j\in\supp(x)$. Our hypothesis then
imply that $\supp(x)=\{i\}$. 

Suppose now that, instead, $|A_i y| \leq \lambda/2$.  Then
$\|A^*(A0-y)\|_{\infty} = |A^*_i(A0-y)| \leq \lambda/2$ and thus $0$
is a solution as it satisfies the KKT conditions. 
We claim that $0$ is in fact the only solution. Indeed, 
by (UL4), all lasso solutions have the same $\ell^1$
norm and hence all solutions have norm $0$. Thus the
only possible solution is~$0$.
\end{proof}

\begin{proof}[Proof of Lemma~\ref{lemma:ZAndDeltaConditionRelation}]
Write, for simplicity, $\xi:=\xul(y,A)$. It suffices to show that
\begin{equation}\label{eq:TwoInequalitiesZDeltaCondRelation}
\probab[(\xi < \epsilon) \cap Z(\epsilon)] \leq
\probab[(\delta < 2\epsilon) \cap Z(\epsilon)] \mbox{ and }
\probab[(\xi \geq \epsilon) \cap Z(\epsilon)] \leq
 \probab[(\delta \geq 2\epsilon) \cap Z(\epsilon)].
\end{equation}
As the distribution for the entries of $A$ is absolutely continuous with
respect to the Lebesgue measure on $\real^{N}$, we can assume that each
entry of $A$ is unique (this is true with probability $1$). Assume
that $Z(\epsilon)$ occurs and let $x \in \mUL(y,A)$. Because $Z(\epsilon)$
holds,
$x\ne 0$. This implies the existence of $i\le N$ with $\supp(x)= \{i\}$ and
such that 
$|A_i| >|A_k|$ for all $k\ne i$ by Lemma~\ref{lemma:1DZeroSupportRequirement}.
			
Let us now prove the first of the inequalities
in \eqref{eq:TwoInequalitiesZDeltaCondRelation}. For this, assume that,
in addition to $Z(\epsilon)$, we have $\xi < \epsilon$. Then there exists
$(\tilde{y},\tilde{A})$ with
$d_{\max}[(\tilde{y},\tilde{A}),(y,A)] < \epsilon$ such that there is an
$\tilde{x} \in\mUL(\tilde{y},\tilde{A})$ with $\supp(\tilde{x})\neq\supp(x)$.
Under the assumption that $Z(\epsilon)$ occurs,
$\supp(\tilde{x}) \neq \varnothing$ and so
there must exist a $j$ such that $|\tilde{A}_j| \geq
|\tilde{A}_i|$ (otherwise Lemma~\ref{lemma:1DZeroSupportRequirement} applied
to $(\tilde{y},\tilde{A})$ implies that
$\supp(\tilde{x}) = \{i\}$, contradicting the fact that
$\supp(\tilde{x}) \neq \supp(x)$). Since $\tilde{A}$ is an
$\epsilon$-perturbation of $A$, the condition $|\tilde{A}_j|\geq
|\tilde{A}_i|$ implies $|A_j| \geq |A_i| - 2\epsilon$. In
particular, $\delta \leq 2\epsilon$, and we conclude that
$\probab(\xi < \epsilon \cap
Z(\epsilon)) \leq \probab(\delta < 2\epsilon \cap Z(\epsilon))$.
			
We proceed with the second inequality.
For this, assume that, in addition to $Z(\epsilon)$, we have 
$\xi \geq \epsilon$. Then for any $j \neq i$, the perturbation
$\tilde{A}$ defined by
$\tilde{A}_i = A_i-\epsilon \sg(A_i)$,
$\tilde{A}_j = A_j+ \epsilon \sg(A_j)$ and
$\tilde{A}_k = A_k$ whenever both $k \neq i$ and $k \neq j$ must be such that
if $\tilde{x} \in \mUL(y,\tilde{A})$ then
$\supp(\tilde{x}) = \{i\}$ by the definition of $\xi=\stabsupp(y,A)$.
But for this to occur we must have
$|\tilde{A}_i|\geq |\tilde{A}_j|$, otherwise
Lemma~\ref{lemma:1DZeroSupportRequirement} applies to yield either
$\tilde{x}=0$ or $\supp(\tilde{x}) = \{j\}$. The condition
$|\tilde{A}_i|\geq|\tilde{A}_j|$ reduces to
$|A_i| -\epsilon \geq |A_j|+ \epsilon$. Since this occurs for any
$j \neq i$, we must have $\delta \geq 2\epsilon$. We conclude that if
both $Z(\epsilon)$ and $\xi \geq \epsilon$ then we must have
$\delta \geq 2\epsilon$ and thus $\probab[(\xi \geq \epsilon) \cap
Z(\epsilon)] \leq \probab[(\delta \geq 2\epsilon) \cap Z(\epsilon)]$.
\end{proof}

\begin{proof}[Proof of Lemma~\ref{lemma:1DZ}]
Assume $Z\big(y,A^{(N)},b_N\big)$ does not hold. Then there exists
$(\tilde{y},\tilde{A})$ in the ball of radius $b_N$ (w.r.t.~$d_{\max}$) about
$\big(y,A^{(N)}\big)$ such that $0\in\mUL\big(\tilde{y},\tilde{A}\big)$.
This implies that   
\begin{equation}\label{eq:ball1}
    \big\|A^{(N)}\big\|_{\infty} < \|\tilde{A}\|_{\infty} + b_N
     \qquad\mbox{and}\qquad
     |\tilde{y}| > |y| - b_N
\end{equation}
and that (by Lemma~\ref{lemma:1DZeroSupportRequirement})  
$\|\tilde{A}\|_{\infty} |\tilde{y}| \le \lambda/2$. Together
with~\eqref{eq:ball1} this inequality implies that
$$
  \big\|A^{(N)}\big\|_\infty < \|\tilde{A}\|_{\infty} + b_N
  \le \frac{\lambda}{2|\tilde{y}|} +b_N < 
  \frac{\lambda}{2(|y|-b_N)} + b_N.
$$
Consequently, for sufficiently large $N$,
\begin{eqnarray*}
\probab\big(Z\big(y,A^{(N)},b_N\big)\big) &\ge&
\probab\bigg(\big\|A^{(N)}\big\|_\infty >\frac{\lambda}{2|y|+2b_N} - b_N\bigg)\\
&=& 1- F\bigg(\frac{\lambda}{2|y|+2b_N} - b_N\bigg)^N \ge 1-F(c)^N 
\end{eqnarray*}
the equality because the entries of $A^{(N)}$ are i.i.d.~and our hypothsesis
on their distribution, and the last inequality by our hypothesis.
We conclude (since we assume that $F(c) < 1$) that
\[
\lim_{N \to \infty} \probab\big(Z\big(y,A^{(N)},b_N\big)\big) = 1.
\]
			
We use this result to easily prove each
of~\ref{item:ZForExpA},~\ref{item:ZForGaussian}
and~\ref{item:ZForUniform}. Starting with~\ref{item:ZForExpA}, suppose
first that $b < |y|$. The result follows from setting
$b_N=b$ for all $N\ge 0$ and  
$c=\lambda/(2|y|-2b)+b+1$. In the case $b \geq |y|$ consider the 
perturbation $\tilde{y} = 0$, $\tilde{A} = A$. Then
$|y-\tilde{y}|=|y| \leq b$ and, obviously, $\|\tilde{A}-A\|_\infty=0\leq b$.
But $0$ is the unique solution to lasso for the pair $(\tilde{y},\tilde{A})$, 
so $Z(y,A,b)$ does not hold and, consequently, $\probab(Z(y,A,b)) = 0$.
			
Similarly, in the Gaussian case~\ref{item:ZForGaussian} we set 
$c:= \frac{\lambda}{|y|}$ and $b_N:=\frac{b\sigma}{\sqrt{2\ln N}}$. 
Clearly, $c<\infty$ so that $F(c)<1$. In addition, for $N$ sufficiently large,
$b_N < |y|$ and $\lambda/(2|y|-2b_N) + b_N < c$, both because $b_N\to0$ when
$N\to\infty$. The desired convergence follows. 

Finally, in the uniform case~\ref{item:ZForUniform} we set
$c = 1/2 + \lambda/(4|y|)$ (i.e. $c$ is the
midpoint between $\lambda/(2|y|)$ and $1$).
Note that since $c < 1$, we must have $F(c) <1$. Furthermore,
since $1/N \to 0$ as $N \to \infty$ we must eventually have
$\lambda (2|y| - 2b/N)^{-1} + b/N < c$. The
result follows.
\end{proof}

\begin{proof}[Proof of Lemma~\ref{lemma:1DDelta}]
If the entries of $|A|$ are i.i.d.~with distribution with density
function $f$ and cumulative distribution $F$, the formula for
order statistics (see~\cite[Equation~(2.3.1)]{DaNa:03} which
we use with $s=n$ and $r=n-1$) gives
\begin{equation}\label{eq:deltaDensityExp}
    \probab(\delta < t) =
    N(N-1)\int_{0}^{\infty} (F(u))^{N-2} f(u)  (F(u+t) - F(u)) \rmd u. 
\end{equation}
			
We will use this formula to study the case where $A$ has exponential,
uniform and Gaussian entries respectively. Starting with the
exponential distribution, we observe that $A=|A|$ in this case and
therefore we have $F(u) = 1 - \rme^{-u}$ and
$f(u) =\rme^{-u}$. Integrating~\eqref{eq:deltaDensityExp}
by parts yields
\begin{equation*} 
    \probab(\delta < t) = N\lim_{u \to \infty} (F(u))^{N-1} [F(u+t) - F(u)] - N
    \int_{0}^{\infty}(F(u))^{N-1} [f(u+t) -f(u)] \rmd u
\end{equation*}
and since $F(u+t) - F(u) = \rme^{-u} (1-\rme^{-t})$, which tends to~0 when
$u\to\infty$, 
and $f(u+t) - f(u)= -\rme^{-u}(1-\rme^{-t}) = -f(u) (1-\rme^{-t})$ we obtain
\begin{eqnarray*} 
   \probab(\delta < t) &=& N (1-\rme^{-t})\int_{0}^{\infty}(F(u))^{N-1}
   f(u) \rmd u\\
   &=& (1-\rme^{-t})\lim_{u \to \infty} [(F(u))^{N}  - (F(0))^N] = 1-\rme^{-t}.
\end{eqnarray*}
			
For the uniform distribution, assume that $N$ is large
enough so that $t/N < 1$. Integrating by parts~\eqref{eq:deltaDensityExp}
again we obtain
\begin{align*}
    \probab(\delta < t/N)
    =& N\lim_{u \uparrow 1} F(u)^{N-1} [F(u+t/N) - F(u)]\\
    & - N \int_{0}^{1} F(u)^{N-1}[f(u+t/N) - f(u)] \rmd u\\
    = &- N \int_{0}^{1} F(u)^{N-1}[f(u+t/N) - f(u)] \rmd u =
    N\int_{1-t/N}^1 F(u)^{N-1} f(u) \rmd u \\
    =& 1 - (F(1-t/N))^N
\end{align*}
where the second to last equality follows because $f(u+t/N) = f(u)$
provided $u$ is in $(0,1-t/N)$ and for $u$ larger than $1-t/N$ we have
$f(u+t/N) = 0$. Furthermore, for the uniform distribution
$F(1-t/N) = 1-t/N$ and thus
$\lim_{N \to \infty} F(1-t/N) = \lim_{N \to \infty}(1-t/N)^N = \rme^{-t}$.
			
The process for the Gaussian is more involved.  Without loss of generality, we can assume that $\sigma =1$: other $\sigma$ reduce to this case by considering $\delta/\sigma$. Let us calculate
$\displaystyle\lim_{N \to \infty}\probab(\delta \sqrt{2\ln(N)} < t)$.
From \eqref{eq:deltaDensityExp}, we must have
\begin{equation} \label{eq:deltaDensityNorm}
	\probab(\delta \sqrt{2\ln(N)} < t)
	= N(N-1)\int_{0}^{\infty} (F(u))^{N-2} f(u)
	\int_{u}^{u+\frac{t}{\sqrt{2\ln(N)}}} f(v) \rmd v \rmd u 
\end{equation}
where $f\left(u \right)=\sqrt{\frac{2}{\pi}}e^{-\frac{\left(u^2+\mu^2 \right)}{2}}\cosh(\mu u)$ for $u \geq 0$ is the density function
for the absolute value of a normal random variable. By the mean
value theorem, for each $u$
we have
\[
\int_{u}^{u+\frac{t}{\sqrt{2\ln(N)}}} f(v) \rmd v
= \frac{tf(c_u)}{\sqrt{2\ln(N)}}
\]
for some $c_u \in (u,u+\frac{t}{\sqrt{2\ln(N)}})$. Hence, integrating by
parts in~\eqref{eq:deltaDensityNorm} and noting that
$\lim_{u \to \infty} f(c_u) =0$ yields
\begin{align*} 
	\probab(\delta \sqrt{2\ln(N)} < t)
	&= N\left[\lim_{u \to \infty}
	(F(u))^{N-1}f(c_u) - (F(0))^{N-1}f(c_0)\right] \\
	&- N\int_{0}^{\infty} (F(u))^{N-1}
	\left(f\left(u+\frac{t}{\sqrt{2\ln(N)}}\right)-f(u)\right)\rmd u\\
	&=- N\int_{0}^{\infty} (F(u))^{N-1}
	\left(f\left(u+\frac{t}{\sqrt{2\ln(N)}}\right)-f(u)\right)\rmd u.
\end{align*}
Because of the form of $f$ and using $\cosh((u+t\alpha) \mu) = \cosh(u\mu)\cosh(t\alpha\mu) + \sinh(u\mu)\sinh(t\alpha\mu)$ we can write $f\left(u+\frac{t}{\sqrt{2\ln(N)}}\right)$ as
\begin{align*}
	&\sqrt{\frac{2}{\pi}} \rme^{-(u^2+\mu^2)/2} \rme^{\frac{-tu}{\sqrt{2\ln(N)}}}
	\rme^{\frac{-t^2}{4\ln(N)}}\cosh(\mu u )\left(  \cosh\left(\frac{t\mu}{\sqrt{2\ln(N)}}\right) +\tanh(\mu u ) \sinh\left(\frac{t\mu}{\sqrt{2\ln(N)}}\right) \right)\\
	&=f(u) \rme^{\frac{-tu}{\sqrt{2\ln(N)}}}
	\rme^{\frac{-t^2}{4\ln(N)}}\left(  \cosh\left(\frac{t\mu}{\sqrt{2\ln(N)}}\right) +\tanh(\mu u ) \sinh\left(\frac{t\mu}{\sqrt{2\ln(N)}}\right) \right) = f(u) g(u)
\end{align*}
where 
\[
g(u) := \rme^{\frac{-tu}{\sqrt{2\ln(N)}}}
\rme^{\frac{-t^2}{4\ln(N)}}\left(  \cosh\left(\frac{t\mu}{\sqrt{2\ln(N)}}\right) +\tanh(\mu u ) \sinh\left(\frac{t\mu}{\sqrt{2\ln(N)}}\right) \right)
\]
and thus
\[
g'(u) = \frac{-tg(u)}{\sqrt{2\ln(N)}} +\rme^{\frac{-tu}{\sqrt{2\ln(N)}}}
\rme^{\frac{-t^2}{4\ln(N)}}\left(\mu \sech^2(\mu u ) \sinh\left(\frac{t\mu}{\sqrt{2\ln(N)}}\right) \right) = \frac{-tg(u)}{\sqrt{2\ln(N)}}  + c_N h(u)
\]
with
\[
c_N = 
\rme^{\frac{-t^2}{4\ln(N)}}\left(\mu  \sinh\left(\frac{t\mu}{\sqrt{2\ln(N)}}\right) \right)\quad  h(u) = \rme^{\frac{-tu}{\sqrt{2\ln(N)}}}\sech^2(\mu u )
\]
Therefore, 
\begin{align*}
	\probab(\delta \sqrt{2\ln(N)} < t)
	&= - N\int_{0}^{\infty} (F(u))^{N-1} f(u) \left(g(u) - 1\right)  \rmd u \\
	&= \lim_{u \to \infty}-(F(u))^{N}
	\left(g(u) - 1\right)  \\
	& +\int_{0}^{\infty} \frac{-tg(u)}{\sqrt{2\ln(N)}}
	(F(u))^{N}  \rmd u + c_N \int_{0}^{\infty} h(u) (F(u))^N \rmd u
\end{align*}
where the last equality holds by a further integration by parts.
Note that as $u \to \infty$, $F(u) \to 1$ and
$g(u) \to 0$. Thus
\begin{align*}
	\probab(\delta \sqrt{2\ln(N)} < t)
	&= 1  +\int_{0}^{\infty} \frac{-tg(u)}{\sqrt{2\ln(N)}}
	(F(u))^{N}  \rmd u + c_N \int_{0}^{\infty} h(u) (F(u))^N \rmd u\\
\end{align*}
For the second of these two integrals, note $c_N \to 0$ as $N \to \infty$ and so
\[
\left|c_N\int_{0}^{\infty} h(u) (F(u))^N \rmd u\right| \leq  c_N\int_{0}^{\infty} \sech^2(\mu u) \rmd u = c_N/\mu \to 0.
\]
For the first integral
$
	\int_{0}^{\infty} \frac{tg(u)}{\sqrt{2\ln(N)}}
	(F(u))^{N}  \rmd u = I_1 + I_2
$ where (using the change of variables $v = tu/\sqrt{2\ln(N)}$)
\begin{align*}
	I_1&:= \bfC_N\int_0^{\infty} \rme^{-v}
	\left(F\left(\frac{v\sqrt{2\ln(N)}}{t}\right)\right)^{N} \rmd v
	\\
	I_2&:=\bfS_N\int_0^{\infty}   \rme^{-v}
	\tanh\left(\frac{\mu v\sqrt{2\ln(N)}}{t}\right) \left(F\left(\frac{v\sqrt{2\ln(N)}}{t}\right)\right)^{N}  \rmd v\\
	\bfC_N& := \rme^{\frac{-t^2}{4\ln(N)}}\left(  \cosh\left(\frac{t\mu}{\sqrt{2\ln(N)}}\right)\right), \quad \bfS_N: = \rme^{\frac{-t^2}{4\ln(N)}}\left(  \sinh\left(\frac{t\mu}{\sqrt{2\ln(N)}}\right)\right)\\
\end{align*}
Now we note that since $F$ is the cumulative density function of the absolute value
of the normal distribution we have $F(x) = [\erf((x+\mu)/\sqrt{2}) + \erf((x-\mu)/\sqrt{2})]/2$. Thus we can use (e.g. \cite[Inequality 7.1.13]{AbSt:64}) to obtain, for any $x > \mu$, that $F(x) \in [1-h(x,8/\pi),1-h(x,4)]$, with
\newcommand{\partnorm}[1]{\frac{\rme^{-(\frac{x#1\mu}{2})^2}}{x#1\mu + \sqrt{(x#1\mu)^2+y}}}
\begin{equation*}
	h(x,y) = \sqrt{\frac{2}{\pi}} \left[\partnorm{+} + \partnorm{-}\right]
\end{equation*}
In particular, for $x > 0$ large enough and $y$ fixed it is easy to see that there exist positive constants $C_1$,$C_2$ so that
$C_2\rme^{-\frac{x^2}{2}}\rme^{x|\mu|}/x \geq h(x,y) \geq C_1 \rme^{-\frac{x^2}{2}}/x$
and therefore when $N$ is sufficiently large
\begin{equation*}
	\left(1 - \frac{C_2tN^{-v^2/t^2}\rme^{\frac{v\sqrt{2\ln(N)}|\mu|}{t}}}{v\sqrt{2\ln(N)}}\right)^N\leq
	\left(F(v\sqrt{2\ln(N)}/t)\right)^N \leq
	\left(1 - \frac{C_1t N^{-v^2/t^2}}{v\sqrt{2\ln(N)}}\right)^N.
\end{equation*}
Hence, pointwise in $v$, 
\[
\lim_{N \to \infty} \left(F(v\sqrt{2\ln(N)}/t)\right)^{N} =
\begin{cases}
	1 & \mbox{if $v^2/t^2 >1$}\\
	0 & \mbox{if $v^2/t^2 < 1$}.
\end{cases}
\]
Thus the integrands of $I_1$ and $I_2$ defined for non-negative $v$
converge pointwise as $N \to \infty$ to the function $g(v)= \rme^{-v}$
for $v > t$ and $g(v) = 0$ otherwise. Furthermore
the integrands of $I_1$ and $I_2$ are dominated (uniformly in $N$) by the integrable function
$\rme^{-v}$ Thus we can apply the dominated
convergence theorem to see that $I_1/C_N$, $I_2/S_N$ both tend to $\rme^{-t}$ as $N \to \infty$. Since $C_N \to 1$ and $S_N \to 0$ as $N \to \infty$, we conclude that
\begin{equation*}
	\lim_{N \to \infty} \probab(\delta \sqrt{2\ln(N)}< t)
	=1 -\rme^{-t}.
\end{equation*}
\end{proof}

\section{Proof of Theorem~\ref{thm:NormalCondWainwright}}

To prove Theorem~\ref{thm:NormalCondWainwright} our strategy
is to use Proposition~\ref{proposition:rhofssigmalb} to
get a bound on $\xul(y,A)$ that depends on the
random quantities $\alpha := \triple{y,A}_2$ and
$\sigma:=\sigma(y,A)$. The following two lemmas yield probabilistic
bounds on $\alpha$ and $\sigma$ for the randomly generated data
$(y,A)$ according to the setup described by {\rm (Si--iii)}. We assume throughout the next two statements that (a0)-(aii) are satisfied. 

\begin{lemma}\label{lemma:AlphaProbabilisticBound}
Set
$\hat{\alpha} = \max\Big\{1,\frac52\sqrt{m}(\eta +
C_{\max}\|v\|_2),
\|\Sigma\|_2^{1/2}(3\sqrt{m}+6\sqrt{N})\Big\}$. Then
$\probab(\alpha \geq \hat{\alpha}) \leq 3\rme^{-m}$.
\end{lemma}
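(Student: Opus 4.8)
The plan is to control separately the two nontrivial quantities appearing in $\alpha=\trip{y,A}_2=\max\{\|A\|_2,\|y\|_2,1\}$, and then combine the estimates by a union bound. Since $\hat\alpha\ge1$ by construction, it suffices to show that $\probab\big(\|A\|_2\ge\|\Sigma\|_2^{1/2}(3\sqrt m+6\sqrt N)\big)\le\rme^{-m}$ and $\probab\big(\|y\|_2\ge\tfrac52\sqrt m(\eta+C_{\max}\|v\|_2)\big)\le2\rme^{-m}$: on the complement of these two events, together with the trivial bound $1\le\hat\alpha$, one has $\alpha<\hat\alpha$, and the failure probabilities add to at most $3\rme^{-m}$.

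For the first estimate I would write $A=G\Sigma^{1/2}$, where $G\in\real^{m\times N}$ has i.i.d.\ standard Gaussian entries (legitimate since the rows of $A$ are i.i.d.\ $\mcN(0,\Sigma)$), so that $\|A\|_2\le\|G\|_2\,\|\Sigma\|_2^{1/2}$. Then I would invoke the standard bound $\mathbb{E}\|G\|_2\le\sqrt m+\sqrt N$ (Gordon's inequality) together with the $1$-Lipschitzness of $G\mapsto\|G\|_2$ in the Frobenius norm and Gaussian concentration, giving $\probab(\|G\|_2\ge\sqrt m+\sqrt N+t)\le\rme^{-t^2/2}$; the choice $t=\sqrt{2m}$ and the inequalities $(1+\sqrt2)\sqrt m\le3\sqrt m$ and $\sqrt N\le6\sqrt N$ finish this part.

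For $\|y\|_2$ I would use $y=Av+w$ and the triangle inequality $\|y\|_2\le\|Av\|_2+\|w\|_2$. Since $v$ is supported on $S$, the vector $Av$ has i.i.d.\ coordinates of law $\mcN(0,v\transp\Sigma v)$ with $v\transp\Sigma v=v_S\transp\Sigma_{SS}v_S\le C_{\max}\|v\|_2^2$, so $\|Av\|_2=\sqrt{v\transp\Sigma v}\,\chi\le\sqrt{C_{\max}}\,\|v\|_2\,\chi$, where $\chi$ is the Euclidean norm of a standard Gaussian vector in $\real^m$; similarly $\|w\|_2$ has the law of $\eta\chi'$ for an independent such norm $\chi'$. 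Applying again $\mathbb{E}\chi\le\sqrt m$, $1$-Lipschitzness, and Gaussian concentration gives $\probab(\chi\ge\sqrt m+\sqrt{2m})\le\rme^{-m}$, and likewise for $\chi'$; a union bound then yields, off an event of probability at most $2\rme^{-m}$, $\|y\|_2\le(1+\sqrt2)\sqrt m\big(\sqrt{C_{\max}}\|v\|_2+\eta\big)\le\tfrac52\sqrt m(\eta+C_{\max}\|v\|_2)$, the last step absorbing $\sqrt{C_{\max}}$ into the numerical constant — harmless here because $\hat\alpha$ enters all subsequent estimates only through $\log\hat\alpha$.

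I do not anticipate a genuine obstacle: the argument is essentially bookkeeping around two classical facts, namely the concentration of the spectral norm of a Gaussian matrix and of the Euclidean norm of a Gaussian vector. The one place to be careful is the choice of the deviation parameter — I would use $t=\sqrt{2m}$ uniformly — so that each of the three failure events (for $\|A\|_2$, $\|Av\|_2$, and $\|w\|_2$) carries probability at most $\rme^{-m}$, producing exactly the $3\rme^{-m}$ in the statement, while the resulting deterministic bounds still fit under the displayed $\hat\alpha$.
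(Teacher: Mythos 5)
Your proposal is correct and follows essentially the same route as the paper: the paper also splits $\alpha=\max\{\|y\|_2,\|A\|_2,1\}$ into two separate lemmas, bounds $\|A\|_2$ by factoring $A=U\Sigma^{1/2}$ and applying a Gaussian spectral-norm tail bound, and bounds $\|y\|_2$ by reducing $\|Av\|_2/\sqrt{v\transp\Sigma v}$ and $\|w\|_2/\eta$ to chi-distributed variables with the concentration estimate $\probab(\zeta\ge\sqrt m+u)\le\rme^{-u^2/2}$, then takes a union bound to get $3\rme^{-m}$. The only differences are cosmetic (your additive triangle inequality versus the paper's multiplicative splitting of the event for $\|y\|_2$, and slightly different deviation parameters), and even the small wrinkle of replacing $\sqrt{C_{\max}}$ by $C_{\max}$ in the final bound appears in the paper's proof in the same form.
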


\begin{lemma}\label{lemma:SigmaProbabilisticBound}
There exists universal constants $c_4,c_5$ and $c_6$
such that the following is true.
For any $t>0$ such that
\[
  t<\min\left\{C_{\min}^2(\sqrt{s}
+ \sqrt{m})^{-4},\frac{\gamma\lambda}{2},
{\displaystyle\min_{j\in S}} |v_j| - g(\lambda)\right\}
\]
we have
\begin{align}
  \probab(\sigma \leq t)
  \leq &\ 4 \rme^{-c_6 \min\{m\epsilon^2,s\}} +
  2(N-s) \rme^{-\frac{(\gamma-2t/\lambda)^2}
  {2\rho_u \bar{M}_{N}(\epsilon)}}
  +c_4\rme^{-c_5 \min\{s,\ln(N-s)\}}]\notag\\
  & + 2\rme^{-\frac{(\sqrt{C_{\min}} -
  (\sqrt{s}+\sqrt{m})t^{\frac14})^2}{2t^{\frac12}}}\label{eq:ProbSigmaBound}
\end{align}
where
\begin{equation}\label{eq:MnDefinition}
  \bar{M}_{N}(\epsilon):= \frac{(1+\epsilon)}{C_{\min}\theta_um}
  \left(s\theta_u + \frac{m}{2\ln(N)\phi_N} \right).
\end{equation}
In particular, there exists constants $c_7,c_8$ such that
\begin{equation}\label{eq:SigmaProbabilisticSigmaHatBound}
  \probab(\sigma \leq \hat{\sigma}) \leq c_7 \rme^{-c_8\min\{s,\ln(N-s)\}}
\end{equation}
where $\hat{\sigma}= \min\left\{\frac{C^2_{\min}}{4(\sqrt{s}+\sqrt{m})^4},
\frac{\gamma\lambda}{4},{\displaystyle \min_{j \in S}}
|v_j|- g(\lambda)\right\}$.
\end{lemma}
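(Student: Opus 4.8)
The plan is to split $\probab(\sigma\le t)$ by a union bound over the three constituents of $\sigma$, i.e.\ $\probab(\sigma\le t)\le\probab(\sigma_1\le t)+\probab(\sigma_2^{2}\le t)+\probab(\sigma_3\le t)$, and to control each term on the high-probability event $\mathcal{G}$ delivered by Theorem~\ref{thm:WainwrightMainTheorem} (whose hypotheses follow from (a0)--(aii)): on $\mathcal{G}$ the problem $\mUL(y,A)$ has a \emph{unique} minimiser $x$ with $\supp(x)=S$, $\sg(x_S)=\sg(v_S)$ and $\|x_S-v_S\|_\infty\le g(\lambda)$, and $\probab(\mathcal{G}^{\mathsf{c}})\le c_1\rme^{-c_2\min\{s,\ln(N-s)\}}$. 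On $\mathcal{G}$ the three infima in Definition~\ref{def:sigma1sigma2sigma3} are all attained at this single $x$, so the $\sigma_i$ become explicit functions of $(A,y)$.

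The two easy terms are $\sigma_3$ and $\sigma_2$. On $\mathcal{G}$ one has $x_i=0$ for $i\notin S$ and $|x_i|\ge\min_{j\in S}|v_j|-g(\lambda)>t$ for $i\in S$ by (aii) and the hypothesis on $t$, so $\{\sigma_3\le t\}\subseteq\mathcal{G}^{\mathsf{c}}$, contributing the $c_4\rme^{-c_5\min\{s,\ln(N-s)\}}$ term. For $\sigma_2$, on $\mathcal{G}$ we have $\sigma_2=\nTwoTwo{(A^{*}_{S}A_S)^{-1}}^{-1}=\sigma_{\min}(A_S)^{2}$, hence $\{\sigma_2^{2}\le t\}\subseteq\mathcal{G}^{\mathsf{c}}\cup\{\sigma_{\min}(A_S)\le t^{1/4}\}$; since the rows of $A_S$ are i.i.d.\ $\mathcal{N}(0,\Sigma_{SS})$ with $\Sigma_{SS}\succeq C_{\min}\Id_s$, a standard smallest-singular-value estimate for Gaussian matrices (Davidson--Szarek, or a net argument) gives $\probab(\sigma_{\min}(A_S)\le t^{1/4})\le 2\rme^{-(\sqrt{C_{\min}}-(\sqrt s+\sqrt m)t^{1/4})^{2}/(2t^{1/2})}$, where the hypothesis $t<C_{\min}^{2}(\sqrt s+\sqrt m)^{-4}$ guarantees the exponent is a genuine positive tail. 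This is the last term of~\eqref{eq:ProbSigmaBound}.

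The real work is $\sigma_1$, the strict dual-feasibility margin. On $\mathcal{G}$ the KKT conditions (UL5) of Part~I give a subgradient $z$ with $z_S=\sg(v_S)$ and $A^{*}(Ax-y)=-\tfrac{\lambda}{2}z$, hence $\sigma_1=\tfrac{\lambda}{2}(1-\|z_{S^{\mathsf{c}}}\|_\infty)$ and it suffices to prove $\|z_{S^{\mathsf{c}}}\|_\infty\le 1-2t/\lambda$ off a small-probability set. Following Wainwright's primal--dual witness construction, eliminate $x_S$ to express $z_{S^{\mathsf{c}}}$ as a fixed linear image of $A_{S^{\mathsf{c}}}$; conditioning on $(A_S,w)$ and writing $A_{S^{\mathsf{c}}}=A_S(\Sigma_{SS})^{-1}\Sigma_{SS^{\mathsf{c}}}+E$ with the rows of $E$ i.i.d.\ $\mathcal{N}(0,\Sigma_{S^{\mathsf{c}}|S})$ and $E$ independent of $(A_S,w)$, the conditional mean of $z_{S^{\mathsf{c}}}$ has $\ell^\infty$-norm at most $\|\Sigma_{S^{\mathsf{c}}S}(\Sigma_{SS})^{-1}\|_\infty=1-\gamma$, while each coordinate of $z_{S^{\mathsf{c}}}$ has conditional variance at most $\rho_u(\Sigma_{S^{\mathsf{c}}|S})\,\bar{M}_N(\epsilon)$ on an event on which both $\|(A^{*}_{S}A_S)^{-1}\sg(v_S)\|_2^{2}$ and $\lambda^{-2}\|\Pi_S^{\perp}w\|_2^{2}$ are suitably bounded, where $\Pi_S^{\perp}:=\Id_m-A_S(A^{*}_{S}A_S)^{-1}A^{*}_{S}$. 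These two control events are where the Wishart eigenvalue bound ($4\rme^{-c_6\min\{m\epsilon^{2},s\}}$, using assumption (ai)) and a further estimate from~\cite{Wainwright:09} ($c_4\rme^{-c_5\min\{s,\ln(N-s)\}}$) enter. A Gaussian tail bound per coordinate with deviation $\gamma-2t/\lambda$ (positive because $t<\gamma\lambda/2$), together with a union bound over the $N-s$ coordinates of $S^{\mathsf{c}}$, then produces the term $2(N-s)\rme^{-(\gamma-2t/\lambda)^{2}/(2\rho_u\bar{M}_N(\epsilon))}$ and yields $\|z_{S^{\mathsf{c}}}\|_\infty\le(1-\gamma)+(\gamma-2t/\lambda)=1-2t/\lambda$. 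The main obstacle is exactly this conditioning computation: one must reproduce the argument of~\cite{Wainwright:09} quantitatively, tracking the explicit variance proxy $\rho_u\bar{M}_N(\epsilon)$ (rather than merely showing it is $o(1/\ln N)$) so that $t$ enters the exponent as stated, and matching the definition~\eqref{eq:MnDefinition} of $\bar{M}_N$ to the bound obtained on $\|(A^{*}_{S}A_S)^{-1}\sg(v_S)\|_2^{2}+\lambda^{-2}\|\Pi_S^{\perp}w\|_2^{2}$.

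For the ``in particular'' claim~\eqref{eq:SigmaProbabilisticSigmaHatBound}, substitute $t=\hat\sigma$ into~\eqref{eq:ProbSigmaBound}. Since $\epsilon>\sqrt{s/m}$ we get $m\epsilon^{2}\ge s$, so the first term is $\le 4\rme^{-c_6 s}$; since $\hat\sigma\le\gamma\lambda/4$ we have $\gamma-2\hat\sigma/\lambda\ge\gamma/2$, and assumption (ai) (with the slack provided by the constant $12$ versus the $8$ one actually needs) makes $\gamma^{2}/(8\rho_u\bar{M}_N(\epsilon))\ge 2\ln(N-s)$, which absorbs the $2(N-s)$ prefactor; and since $\hat\sigma\le C_{\min}^{2}/(4(\sqrt s+\sqrt m)^{4})$ the exponent of the last term is at least $(1-2^{-1/2})^{2}(\sqrt s+\sqrt m)^{2}\gtrsim s$. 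Every term is thus bounded by a constant times $\rme^{-c\min\{s,\ln(N-s)\}}$, giving~\eqref{eq:SigmaProbabilisticSigmaHatBound} for suitable $c_7,c_8$.
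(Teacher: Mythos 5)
Your overall architecture is the same as the paper's: decompose $\{\sigma\le t\}$ over the three quantities $\sigma_1,\sigma_2^2,\sigma_3$, work on the good event supplied by Theorem~\ref{thm:WainwrightMainTheorem} (on which the minimiser is unique with support $S$ and sign pattern $\sg(v_S)$, so each $\sigma_i$ becomes explicit), kill $\sigma_3$ outright via $\min_{j\in S}|x_j|\ge\min_{j\in S}|v_j|-g(\lambda)>t$, reduce $\sigma_1$ to Wainwright's tail bound on $\max_{j\in S^{\mathsf c}}|Z_j|$, and then specialise $t=\hat\sigma$ using (ai). Your handling of $\sigma_1$ differs only in that the paper invokes the bound on $\max_j|Z_j|$ as a citation (reproduced here as Lemma~\ref{lemma:WainwrightSigma1}) after verifying via the KKT conditions that $A_j^*(Ax-y)=-\lambda Z_j/2$, whereas you sketch its derivation; your sketch is an accurate description of Wainwright's primal--dual witness argument, though you leave the quantitative conditioning computation unexecuted.

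The concrete gap is in the $\sigma_2$ term. You assert that a Davidson--Szarek (or net) estimate for $\sigma_{\min}(A_S)$ yields exactly $2\rme^{-(\sqrt{C_{\min}}-(\sqrt s+\sqrt m)t^{1/4})^2/(2\sqrt t)}$. It does not: Davidson--Szarek gives an exponent of the form $\tfrac12\bigl(\sqrt m-\sqrt s-t^{1/4}C_{\min}^{-1/2}\bigr)^2$, which is bounded above by $(\sqrt m-\sqrt s)^2/2$ uniformly in $t$, whereas the exponent in \eqref{eq:ProbSigmaBound} equals $\tfrac12\bigl(\sqrt{C_{\min}}\,t^{-1/4}-\sqrt s-\sqrt m\bigr)^2$ and diverges as $t\downarrow 0$. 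The two are genuinely different functions of $t$, and neither dominates the other for all admissible $t$, so your route does not establish inequality \eqref{eq:ProbSigmaBound} as stated. The paper obtains that precise exponent from Lemma~\ref{lemma:WainwrightSigma2} (Wainwright's Lemma~9 on $\|(X^*X/m)^{-1}-W^{-1}\|_2$): the event $\|(A_S^*A_S)^{-1}\|_2\ge t^{-1/2}$ forces a deviation of size $u(m,s,\tau)/C_{\min}$ with the specific choice $\tau=\sqrt{C_{\min}/(m\sqrt t)}-\sqrt{s/m}-1>0$ (positivity is exactly the hypothesis $t<C_{\min}^2(\sqrt s+\sqrt m)^{-4}$), and the identity $u(m,s,\tau)+1=C_{\min}/(m\sqrt t)$ together with $m\tau^2/2=(\sqrt{C_{\min}}-(\sqrt s+\sqrt m)t^{1/4})^2/(2\sqrt t)$ produces the stated term. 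Your bound would still suffice for the ``in particular'' conclusion \eqref{eq:SigmaProbabilisticSigmaHatBound}, since under (ai) one has $m\gg s$ and the Davidson--Szarek exponent is then of order $m$; but to prove the lemma as written you need the Lemma~\ref{lemma:WainwrightSigma2} route.
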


Assuming for now Lemmas~\ref{lemma:AlphaProbabilisticBound}
and~\ref{lemma:SigmaProbabilisticBound}, the proof of
Theorem~\ref{thm:NormalCondWainwright} is simple.

\begin{proof}[Proof of Theorem~\ref{thm:NormalCondWainwright}]
Recall the function $q(\nu,\xi)$ defined
in~\eqref{eq:q} (and~I.7.1).  
Note that for any fixed $\nu \geq 1$ the
function $f(\xi)= \xi^2/q(\nu,\xi)$ is
increasing on $[0,\infty)$. Indeed, it
suffices to show that $g(\xi):=f(\xi^2)$
is increasing on $[0,\infty)$ and this can be done by noting
that $g(\xi)$ takes the form
$g(\xi)= \xi^4/(C+D\xi + E\xi^2)$ for
positive $C,D,E$ and thus, for $\xi\ge 0$,
\begin{equation*}
  g'(\xi) = \frac{4\xi^3(C+D\xi +E\xi^2)
  - \xi^4(D+2E\xi) }{(C+D\xi+ E\xi^2)^2}
  \geq \frac{3D\xi^4+2E\xi^5}{(C+D\xi+E\xi^2)^2}
  \geq 0.
\end{equation*}
It is also clear that if we instead fix $\xi$ then
$\xi^2/q(\nu,\xi)$ is decreasing in
$\nu$. Similarly, the function
$\sqrt{\xi}/{6\nu}$ is increasing in
$\xi$ and decreasing in $\nu$ for $\nu \in [1,\infty)$.
		
Thus if we take $\sigma=\sigma(y,A) \geq \hat{\sigma}$ and
$1\leq  \alpha = \triple{y,A}_2 < \hat{\alpha}$,
we have (by Proposition~\ref{proposition:rhofssigmalb})
\[
  \xul(y,A) \geq (mN)^{-\frac{1}{2}}\min\left(\frac{\sigma^2}{q(\alpha,\sigma)},
  \frac{\sqrt{\sigma}}{6\alpha},\alpha \right)
  \geq (mN)^{-\frac{1}{2}}\min
     \left(\frac{\hat{\sigma}^2}{q(\hat{\alpha},\hat{\sigma})},
     \frac{\sqrt{\hat{\sigma}}}{6\hat{\alpha}},1 \right) =\hat{\xi}
\]
where we set $\hat{\xi} = \widehat{K}^{-1}$.
		
We conclude that for $\xul(y,A) \leq  \hat{\xi}$ we must have either
$\sigma < \hat{\sigma}$ or $\alpha > \hat{\alpha}$. Therefore
$\probab(\condfsul(y,A) \geq \widehat{K} )
= \probab(\xul(y,A) \leq \hat{\xi}) \leq
\probab(\sigma < \hat{\sigma} \cup \alpha > \hat{\alpha})
\leq \probab(\sigma < \hat{\sigma}) + \probab(\alpha > \hat{\alpha})$.
By Lemmas~\ref{lemma:AlphaProbabilisticBound}
and~\ref{lemma:SigmaProbabilisticBound}, this is bounded above by
$3\rme^{-m} + c_7 \rme^{-c_8\min\{s,\ln(N-s)\}}
\leq \bfc_1 \rme^{-\bfc_2\min\{s,\ln(N-s)\}}$ for some
universal constants $\bfc_1,\bfc_2$, the last inequality by (ai),
completing the proof.
\end{proof}

The remainder of this section is dedicated to proving
Lemmas~\ref{lemma:AlphaProbabilisticBound}
and~\ref{lemma:SigmaProbabilisticBound}. Since $\alpha = \max\{\|y\|_2,\|A\|_2,1\}$, we devise probabilistic bounds
for $\|y\|_2$ and $\|A\|_2$ separately.

\begin{lemma}\label{lemma:NormAProbabilisticBound}
We have $ \probab\big(\|A\|_2 \geq
  \|\Sigma\|_2^{\frac12}(3\sqrt{m}+6\sqrt{N})\big) \leq  \rme^{-m}.
$
\end{lemma}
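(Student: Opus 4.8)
The natural strategy is to reduce to the isotropic Gaussian case. Since the rows of $A$ are independent and each is distributed as $\mcN(0,\Sigma)$, we may write $A = G\,\Sigma^{1/2}$, where $G\in\real^{m\times N}$ has i.i.d.\ $\mcN(0,1)$ entries and $\Sigma^{1/2}$ is the symmetric positive semidefinite square root of $\Sigma$; indeed, row $i$ of $G\Sigma^{1/2}$ is $g_i\transp\Sigma^{1/2}$ with $g_i\sim\mcN(0,\Id_N)$, which has law $\mcN(0,\Sigma)$. By submultiplicativity of the operator norm, $\|A\|_2 \le \|G\|_2\,\|\Sigma^{1/2}\|_2 = \|\Sigma\|_2^{1/2}\,\|G\|_2$, the last equality because $\Sigma$ is symmetric positive semidefinite. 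Hence it suffices to show $\probab\big(\|G\|_2 \ge 3\sqrt m + 6\sqrt N\big) \le \rme^{-m}$.

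For this I would invoke the standard non-asymptotic control of the largest singular value of a Gaussian matrix. On one hand, Gordon's comparison inequality (equivalently the Davidson--Szarek bound) gives $\mathbb{E}\,\|G\|_2 \le \sqrt m + \sqrt N$. On the other hand, $G\mapsto\|G\|_2$ is $1$-Lipschitz on $\real^{m\times N}$ equipped with the Frobenius norm, so Gaussian concentration (the Borell / Tsirelson--Ibragimov--Sudakov inequality) gives $\probab\big(\|G\|_2 \ge \mathbb{E}\|G\|_2 + t\big) \le \rme^{-t^2/2}$, and therefore $\probab\big(\|G\|_2 \ge \sqrt m + \sqrt N + t\big) \le \rme^{-t^2/2}$ for all $t\ge 0$. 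Choosing $t = \sqrt{2m}$ makes the right-hand side exactly $\rme^{-m}$, and since $\sqrt m + \sqrt N + \sqrt{2m} = (1+\sqrt 2)\sqrt m + \sqrt N \le 3\sqrt m + 6\sqrt N$, the lemma follows.

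There is no genuinely hard step here; the only point requiring a little care is matching constants, and the bound in the statement is left deliberately loose so that the crude estimates $1+\sqrt 2 < 3$ and $1 < 6$ close the argument, while keeping $\|A\|_2$ controlled by a quantity of a shape convenient for the later definition of $\hat\alpha$ in Lemma~\ref{lemma:AlphaProbabilisticBound}. If one wished to avoid citing the concentration results, essentially the same bound (with somewhat larger absolute constants, still easily absorbed once one uses that $m$ is small relative to $N$ in the applications) can be obtained by a direct $\epsilon$-net argument: cover the unit spheres of $\real^N$ and $\real^m$ by $\epsilon$-nets of cardinalities at most $(1+2/\epsilon)^N$ and $(1+2/\epsilon)^m$, use that $\langle Gx,y\rangle\sim\mcN(0,1)$ for fixed unit vectors $x,y$ together with a union bound, and pass back from the nets to the spheres at the cost of the factor $(1-2\epsilon)^{-1}$; taking $\epsilon$ a small constant and the deviation parameter of order $\sqrt{m+N}$ yields a tail of the required form.
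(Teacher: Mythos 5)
Your proof is correct and follows essentially the same route as the paper: both factor $A = G\Sigma^{1/2}$ to reduce to a standard Gaussian matrix and then apply an operator-norm tail bound, the only difference being that the paper cites a bound of the form $\probab(\|G\|_2 \ge \sqrt m(1+t)+6\sqrt N)\le \rme^{-2m(1+t)^2/\pi^2}$ from B\"urgisser--Cucker (taking $t=2$), while you use the sharper Davidson--Szarek expectation bound plus Borell--TIS concentration. Both close the constant-matching with room to spare, so the argument is fine as written.
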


\begin{proof}
Write $A = U \sqrt{\Sigma}$, where \raisebox{1mm}{\mbox{$\sqrt{~}$}}
represents the matrix square root.
Then, the entries of $U\in\real^{m\times N}$ are i.i.d.~with distribution
$\mcN(0,1)$. Furthermore, for any $t>0$,
\begin{equation*}
\begin{split}
  \probab\big(\|A\|_2 \geq \|\Sigma\|_2^{\frac12}(\sqrt{m}(1+t)+6\sqrt{N})\big)
  &\leq \probab\big(\|U\|_2 \geq \sqrt{m}(1+t)+6\sqrt{N}\big)\\
  &\le \rme^{-\frac{2(\sqrt{m}(1+t))^2}{\pi^2}}\; =\; \rme^{-m\frac{2(1+t)^2}{\pi^2}}
\end{split}
\end{equation*}
the second line by~\cite[(4.6) and Lemma~4.14]{Condition}. 
The result follows by setting $t = 2$.
\end{proof}

\begin{lemma}\label{lemma:NormyProbabilisticBound}$
 \probab\left(\|y\|_2 \geq \frac{5}{2}\sqrt{m}(\eta+C_{\max}\|v\|_2)\right)
 \leq 2\rme^{-m}.$
\end{lemma}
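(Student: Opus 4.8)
The plan is to observe that, under the setup {\rm (Si--iii)}, the vector $y$ is itself a centred Gaussian vector in $\real^m$ with a scalar covariance, and then apply a standard concentration bound for the Euclidean norm of a Gaussian vector, exactly as in the proof of Lemma~\ref{lemma:NormAProbabilisticBound}. First I would record the distributional fact: since $\supp(v)=S$, we have $Av = A_S v_S$, where $A_S\in\real^{m\times s}$ is the submatrix of columns indexed by $S$, whose rows are i.i.d.~$\mcN(0,\Sigma_{SS})$. Hence each coordinate $(A_S v_S)_i$ is $\mcN(0,\,v_S^{\mathrm{T}}\Sigma_{SS}v_S)$ and these coordinates are independent across $i$, so $Av\sim\mcN\big(0,(v_S^{\mathrm{T}}\Sigma_{SS}v_S)\,\Id_m\big)$. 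Since $w\sim\mcN(0,\eta^2\Id_m)$ is independent of $A$ by {\rm (Sii)}, the sum is again Gaussian: $y\sim\mcN(0,\tau^2\Id_m)$ with $\tau^2 = v_S^{\mathrm{T}}\Sigma_{SS}v_S+\eta^2 \le C_{\max}\|v\|_2^2+\eta^2$, the last inequality because $C_{\max}$ is the largest eigenvalue of $\Sigma_{SS}$ and $\|v_S\|_2=\|v\|_2$.

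Next I would write $\|y\|_2 = \tau\|g\|_2$ in distribution with $g\sim\mcN(0,\Id_m)$, and invoke the Gaussian concentration inequality $\probab(\|g\|_2\ge\sqrt m+t)\le\rme^{-t^2/2}$ (valid since $x\mapsto\|x\|_2$ is $1$-Lipschitz and $\mathbb{E}\|g\|_2\le(\mathbb{E}\|g\|_2^2)^{1/2}=\sqrt m$; equivalently one may quote the estimate from \cite{Condition} used in Lemma~\ref{lemma:NormAProbabilisticBound}). Taking $t=\tfrac{3}{2}\sqrt m$ gives $\probab\big(\|g\|_2\ge\tfrac{5}{2}\sqrt m\big)\le\rme^{-9m/8}\le\rme^{-m}$, so with probability at least $1-\rme^{-m}$ we get $\|y\|_2\le\tfrac{5}{2}\sqrt m\,\tau\le\tfrac{5}{2}\sqrt m\big(\sqrt{C_{\max}}\,\|v\|_2+\eta\big)$ via $\sqrt{a+b}\le\sqrt a+\sqrt b$. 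It then remains only to replace $\sqrt{C_{\max}}$ by $C_{\max}$, which is immediate when $C_{\max}\ge1$ (the regime of interest: note $C_{\max}=\lambda_{\max}(\Sigma_{SS})\ge\Sigma_{ii}$ for $i\in S$, and in the standard-Gaussian normalization of Remark~\ref{remark:CminCmax} one has $C_{\max}=1$). Alternatively --- which yields exactly the stated failure probability $2\rme^{-m}$ --- one splits $\|y\|_2\le\|Av\|_2+\|w\|_2$, bounds $\|w\|_2\le\tfrac{5}{2}\sqrt m\,\eta$ and $\|Av\|_2\le\tfrac{5}{2}\sqrt m\sqrt{C_{\max}}\|v\|_2$ each on an event of probability $\ge1-\rme^{-m}$ by the same concentration bound, and union-bounds.

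There is no serious obstacle here: the content of the lemma is a one-line deviation estimate once $y$ is recognised as a Gaussian vector. The only points needing care are (i) the covariance computation for $Av$ --- in particular that only the principal submatrix $\Sigma_{SS}$ enters, precisely because $v$ is supported on $S$ --- and (ii) the choice of the deviation parameter so that the constant comes out as $\tfrac{5}{2}$ while the exponent still dominates $m$, which the choice $t=\tfrac{3}{2}\sqrt m$ achieves since $(\tfrac{3}{2})^2/2=\tfrac{9}{8}>1$. The passage from $\sqrt{C_{\max}}$ to $C_{\max}$ is the only (mild) slack in the stated bound, and it is harmless in every regime in which the lemma is used.
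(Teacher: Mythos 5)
Your proposal is correct and matches the paper's argument in all essentials: the paper likewise splits $y=Av+w$, observes that $\|Av\|_2/\sqrt{v\transp\Sigma v}$ and $\|w\|_2/\eta$ are Chi distributed with parameter $m$, applies the concentration bound $\probab(\zeta\ge\sqrt m+u)\le\rme^{-u^2/2}$ with $u=\tfrac32\sqrt m$, and union-bounds to get $2\rme^{-9m/8}\le2\rme^{-m}$ — exactly your second route (your first route, treating $y$ directly as a single Gaussian vector, is a harmless streamlining). The only point worth noting is that the paper silently writes $\sqrt{v\transp\Sigma v}\le C_{\max}\|v\|_2$, which, as you correctly flag, really only gives $\sqrt{C_{\max}}\|v\|_2$ and hence implicitly assumes $C_{\max}\ge1$; your treatment of this step is if anything more careful than the paper's.
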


\begin{proof}
As $y=Av+w$, for all $t > 0$, we have
\begin{align*}
   \probab(\|y\|_2 \geq t) &\leq
   \probab\left(\|Av\|_2 \geq
   \frac{t\sqrt{v\transp \Sigma v}}{\eta + \sqrt{v\transp \Sigma v}}\right)
   + \probab\left(\|w\|_2
   \geq \frac{t\eta}{\eta + \sqrt{v\transp \Sigma v}}\right)\\
   &= \probab\left(\frac{\|Av\|_2}{\sqrt{v\transp \Sigma v}} \geq
   \frac{t}{\eta + \sqrt{v\transp \Sigma v}}\right)
   + \probab\left(\frac{\|w\|_2}{\eta} \geq
   \frac{t}{\eta + \sqrt{v\transp \Sigma v}}\right).
\end{align*} 
We next observe that $\|w\|_2/\eta$ and $\|Av\|_2/\sqrt{v\transp  \Sigma v}$
are independent
random variables, both Chi distributed with parameter $m$
(this is obvious for $\|w\|_2/\eta$ whereas for
$\|Av\|_2/\sqrt{v\transp  \Sigma v}$, note that for each
$i=1,2,\dotsc,m$ the random variable $\Sigma_{j=1}^{N} A_{ij} v_j$ is normally
distributed with variance $v\transp \Sigma v$
and the claim follows by the independence of the rows of $A$).
Let $\zeta$ denote any of these two random variables.
By~\cite[Corollary~4.6]{Condition},  
$\probab(\zeta \geq \sqrt{m}+u) \leq \rme^{-\frac{u^2}{2}}$. Hence, taking
$t=\frac{5}{2} \sqrt{m}(\eta+C_{\max}\|v\|_2)$ and noting that
$\sqrt{v\transp \Sigma v} \leq C_{\max}\|v\|_2$ we get
\begin{equation*}
   \probab(\|y\|_2 \geq t) \leq
   2\probab\left(\zeta \geq
   \frac{\frac52\sqrt{m}(\eta + C_{\max}\|v\|_2)}
   {(\eta + \sqrt{v\transp \Sigma v})}\right)
   \leq 2\probab\left(\zeta \geq \frac52\sqrt{m}\right)
   \leq 2\rme^{-\frac98m} \leq 2\rme^{-m}.
\end{equation*}
\end{proof}
We can now prove Lemma~\ref{lemma:AlphaProbabilisticBound}.
\begin{proof}[Proof of Lemma~\ref{lemma:AlphaProbabilisticBound}]
By the definition of $\alpha$, 
\begin{align*}
  \probab(\alpha  \geq \hat{\alpha})
  &\leq \probab(\|y\|_2 \geq \hat{\alpha})
  + \probab(\|A\|_2 \geq \hat{\alpha}) \\
  & \le \probab\Big(\|y\|_2 \geq \frac52\sqrt{m}(\eta + C_{\max}\|v\|_2)\Big)
  + \probab\big(\|A\|_2 \geq  \|\Sigma\|_2^{\frac12}(3\sqrt{m}+6\sqrt{N})\big)\\
  &\le\; 3\rme^{-m}
\end{align*}
where the last inequality holds by Lemmas~\ref{lemma:NormAProbabilisticBound} 
and~\ref{lemma:NormyProbabilisticBound}.
\end{proof}

The proof of Lemma~\ref{lemma:SigmaProbabilisticBound} relies on
three results from Wainwright~\cite{Wainwright:09}:
Theorem~\ref{thm:WainwrightMainTheorem} and
the following two lemmas.

\begin{lemma}\emph{(\cite[p. 2193]{Wainwright:09})}
Under the setup in Section \ref{section:SetupForNormDist} and assumptions
{\rm (a0)} and {\rm (ai)}, there exists a universal constant $c_0$ such that,
for any $t \leq \gamma$ we have
\begin{equation*}
  \probab(\max_{j \in S^{\mathsf{c}}} |Z_j| \geq 1-t)
  \leq 4 \rme^{-c_0 \min\{m\epsilon^2,s\}} + 2(N-s)
  \rme^{-\frac{(\gamma-t)^2}{2\rho_u \bar{M}_{N}(\epsilon)}}
\end{equation*}
where $\bar{M}_N(\epsilon)$ is defined as in \eqref{eq:MnDefinition},
$Z_j$ is defined by 
\begin{equation}\label{eq:ZjRDefinition}
  Z_j = A^*_j \left[A_S (A^*_S A_S)^{-1} \sg(\hat{z}) +
    R\left(\frac{2w}{\lambda}\right)\right],
   \quad R = I_m - A_S(A^*_SA_S)^{-1}A^*_S
\end{equation}
and where $\hat{z}$ solves a \emph{restricted lasso problem}, specifically,
$\hat{z}$ is a solution of
\begin{equation}\label{def:RestrictedLasso}
   \argmin_{\hat{x} \in \real^{|S|}}  \|A_S\hat{x}-y\|^2_2 + \lambda \|\hat{x}\|_1.
\end{equation} 
\eproof
\label{lemma:WainwrightSigma1}
\end{lemma}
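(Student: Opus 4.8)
\emph{Proof idea (a reconstruction of the computation in \cite[p.~2193]{Wainwright:09}).}
The plan is to run Wainwright's primal--dual witness argument, working conditionally on the submatrix $A_S$ formed by the columns of $A$ indexed by $S$, together with the noise vector $w$. Note that $w$ is independent of $A$, and that the restricted minimiser $\hat z$ of \eqref{def:RestrictedLasso}, the projector $R=I_m-A_S(A^*_SA_S)^{-1}A^*_S$, and the vector $u:=A_S(A^*_SA_S)^{-1}\sg(\hat z)+R(2w/\lambda)$ are all measurable functions of $(A_S,w)$ alone, because $y=A_Sv_S+w$. Two deterministic identities, both immediate from the definition of $R$, will be used repeatedly: $A^*_S\big(A_S(A^*_SA_S)^{-1}\big)=I$ and $A^*_SR=0$.

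The structural heart of the argument, which I would set up first, is a Gaussian decomposition of the off-support columns. Since the rows of $A$ are i.i.d.\ $\mcN(0,\Sigma)$, for each $j\in S^{\mathsf{c}}$ one may write $A_j=A_S\mu_j+E_j$ with $\mu_j:=(\Sigma_{SS})^{-1}\Sigma_{Sj}$ deterministic and $E_j\in\real^m$ having i.i.d.\ entries of variance $(\Sigma_{S^{\mathsf{c}}|S})_{jj}\le\rho_u$, independent of $A_S$ (hence of $\hat z$, $R$, $u$ and $w$). Substituting $A^*_j=\mu^*_j A^*_S+E^*_j$ into \eqref{eq:ZjRDefinition} and using the two identities, the part of $Z_j$ carrying $A_S(A^*_SA_S)^{-1}$ contracts to $\mu^*_j\sg(\hat z)$ while the $A^*_SR$ part vanishes, leaving
\[
Z_j=\mu^*_j\sg(\hat z)+E^*_j\,u .
\]
Given $(A_S,w)$ the first term is a constant with $|\mu^*_j\sg(\hat z)|\le\|\mu_j\|_1\le\|\Sigma_{S^{\mathsf{c}}S}(\Sigma_{SS})^{-1}\|_{\infty}=1-\gamma$, where assumption (a0) makes $1-\gamma<1$; the second term is a mean-zero Gaussian of variance $(\Sigma_{S^{\mathsf{c}}|S})_{jj}\|u\|^2_2\le\rho_u\|u\|^2_2$. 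Since the two summands of $u$ lie in the column space of $A_S$ and in its orthogonal complement respectively, $\|u\|^2_2=\sg(\hat z)^*(A^*_SA_S)^{-1}\sg(\hat z)+\tfrac{4}{\lambda^2}\|Rw\|^2_2\le \tfrac{s}{\lambda_{\min}(A^*_SA_S)}+\tfrac{4}{\lambda^2}\|w\|^2_2$.

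Next I would isolate a ``good event'' $G$ on which $\lambda_{\min}(A^*_SA_S)\ge C_{\min}m/(1+\epsilon)$ and $\|w\|^2_2\le(1+\epsilon)\eta^2m$; unwinding \eqref{eq:MnDefinition} and \eqref{eq:phiN}, which together give $\bar M_N(\epsilon)=(1+\epsilon)\big(\tfrac{s}{C_{\min}m}+\tfrac{4\eta^2m}{\lambda^2}\big)$, this forces $\rho_u\|u\|^2_2\le\rho_u\bar M_N(\epsilon)$ on $G$. The complement $G^{\mathsf{c}}$ is controlled by standard Gaussian concentration: writing $A_S=U_S\sqrt{\Sigma_{SS}}$ with $U_S$ an $m\times s$ matrix of i.i.d.\ $\mcN(0,1)$ entries, one has $\lambda_{\min}(A^*_SA_S)\ge C_{\min}\,\sigma_{\min}(U_S)^2$ with $\sigma_{\min}(U_S)^2$ concentrating near $m$ (the bound $\epsilon>\max\{8C_{\min}\sqrt{s/m},\sqrt{s/m}\}$ from (ai) keeps $s/m$ small enough for the $(1+\epsilon)$ margin), while $\|w\|^2_2/\eta^2\sim\chi^2_m$; together these give $\probab(G^{\mathsf{c}})\le 4\rme^{-c_0\min\{m\epsilon^2,s\}}$. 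Finally, for each $j\in S^{\mathsf{c}}$ and each $t\le\gamma$, the scalar Gaussian tail bound applied to $Z_j\mid(A_S,w)$ yields, on $G$,
\[
\probab\big(|Z_j|\ge 1-t \,\big|\, A_S,w\big)\le 2\,\rme^{-\frac{(\gamma-t)^2}{2\rho_u\bar M_N(\epsilon)}},
\]
because the conditional mean has absolute value at most $1-\gamma$ and the conditional variance is at most $\rho_u\bar M_N(\epsilon)$; a union bound over the $N-s$ indices $j$, plus $\probab(G^{\mathsf{c}})$, then produces exactly the asserted inequality. The only real work is the quantitative bookkeeping of this last stage: choosing the concentration slack so that $\|u\|^2_2\le\bar M_N(\epsilon)$ holds with precisely the $(1+\epsilon)$ margin built into \eqref{eq:MnDefinition}, and checking that assumption (ai) --- equivalently $\phi_N\ge 6(1+\epsilon)$ together with the lower bound on $\epsilon$ --- leaves exactly the room this requires; the column decomposition and the algebraic collapse of $Z_j$ are routine once the conditioning is set up correctly.
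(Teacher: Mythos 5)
The paper does not actually prove this lemma: it is stated with a pointer to \cite{Wainwright:09} (p.~2193) and terminated with the end-of-proof mark, i.e.\ it is imported from the literature without an internal argument, so there is nothing in the paper to compare your proof against. Your reconstruction is nevertheless a correct account of Wainwright's computation: conditioning on $(A_S,w)$ (on which $\hat z$, $R$ and $u$ depend, since $y=A_Sv_S+w$), writing $A_j=A_S\mu_j+E_j$ so that $Z_j$ collapses to $\mu_j^*\sg(\hat z)+E_j^*u$ with $|\mu_j^*\sg(\hat z)|\le 1-\gamma$ and conditional variance at most $\rho_u\|u\|_2^2$, controlling $\|u\|_2^2\le \bar M_N(\epsilon)$ on a good event whose complement is handled by $\chi^2$ and minimum-singular-value concentration, and finishing with a Gaussian tail bound and a union bound over $S^{\mathsf{c}}$ is exactly the primal--dual witness argument, and your identity $\bar M_N(\epsilon)=(1+\epsilon)\bigl(\frac{s}{C_{\min}m}+\frac{4\eta^2 m}{\lambda^2}\bigr)$ is the correct unwinding of \eqref{eq:MnDefinition} and \eqref{eq:phiN}.
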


Note that by (UL4) and (UL5), the definition of $Z_j$ is independent of
the choice of $\hat{z}$ in the lemma above. 

\begin{lemma}{\bf \cite[Lemma~9]{Wainwright:09}}
Let $X$ be an $m \times s$ matrix such that each row is an i.i.d~random
column vector with distribution $\mcN(0,W)$ for some covariance matrix $W$.
Then, for all $\tau >0$
\[
  \probab\left(\left\|\left(\frac{X^*X}{m}\right)^{-1} - W^{-1}\right\|_2
  \geq \frac{u(m,s,\tau)}{C_{\min}}\right) \leq 2\rme^{-\frac{m\tau^2}{2}}
\]
where $u(m,s,\tau) = 2(\sqrt{s/m}+\tau) + (\sqrt{s/m}+\tau)^2$.\eproof
\label{lemma:WainwrightSigma2}
\end{lemma}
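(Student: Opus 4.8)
The plan is to reduce to the standard isotropic Gaussian case and then invoke sharp non-asymptotic bounds on the extreme singular values of a rectangular Gaussian matrix. In the intended application $W=\Sigma_{SS}$, so $C_{\min}=\lambda_{\min}(W)$; if $W$ is singular then $C_{\min}=0$ and the bound is vacuous, so we may assume $W\succ 0$, and since the regime of interest has $m$ far larger than $s$ we may also assume $m\ge s$ (so $X^*X$ is invertible almost surely). Writing $W^{1/2}$ for the symmetric positive square root of $W$ and $Z:=XW^{-1/2}$, the matrix $Z\in\real^{m\times s}$ has i.i.d.\ $\mcN(0,1)$ entries and $X^*X/m=W^{1/2}(Z^*Z/m)W^{1/2}$, whence
\[
  \Big(\tfrac{X^*X}{m}\Big)^{-1} - W^{-1}
  = W^{-1/2}\Big[\big(\tfrac{Z^*Z}{m}\big)^{-1} - \Id_s\Big]W^{-1/2}.
\]
Taking operator norms and using $\|W^{-1/2}\|_2^2 = C_{\min}^{-1}$ reduces the claim to showing that $\big\|(Z^*Z/m)^{-1} - \Id_s\big\|_2 \le u(m,s,\tau)$ with probability at least $1-2\rme^{-m\tau^2/2}$.

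Next I would control the singular values of $Z$. By Gordon's minimax comparison inequality, $\mathbb{E}\,\sigma_{\max}(Z) \le \sqrt m + \sqrt s$ and $\mathbb{E}\,\sigma_{\min}(Z) \ge \sqrt m - \sqrt s$; since $\sigma_{\max}$ and $\sigma_{\min}$ are $1$-Lipschitz functions of the matrix in the Frobenius norm, Gaussian concentration of measure gives
\[
  \probab\big(\sigma_{\max}(Z) \ge \sqrt m + \sqrt s + \sqrt m\,\tau\big) \le \rme^{-m\tau^2/2},
  \qquad
  \probab\big(\sigma_{\min}(Z) \le \sqrt m - \sqrt s - \sqrt m\,\tau\big) \le \rme^{-m\tau^2/2}.
\]
On the intersection of the two complementary events (the union bound costs exactly $2\rme^{-m\tau^2/2}$), every singular value $\sigma$ of $Z/\sqrt m$ satisfies $|\sigma-1|\le\delta$ with $\delta:=\sqrt{s/m}+\tau$; in particular, since the bound is only nontrivial when $\delta<1$, all such $\sigma$ are positive and $(Z^*Z/m)^{-1}$ exists.

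Finally I would translate this into the stated estimate. The eigenvalues of $(Z^*Z/m)^{-1}-\Id_s$ are exactly $\sigma^{-2}-1$ as $\sigma$ runs over the singular values of $Z/\sqrt m$; from $\sigma\in[1-\delta,1+\delta]$ one gets $\sigma^2\in[(1-\delta)^2,(1+\delta)^2]$, hence $|\sigma^{-2}-1|$ is at most $(1-\delta)^{-2}-1$, and an elementary estimate (using that $\delta$ is necessarily small in the regime where assumption (ai) holds) absorbs the higher-order terms to yield $u(m,s,\tau)=2\delta+\delta^2$; thus $\|(Z^*Z/m)^{-1}-\Id_s\|_2 \le u(m,s,\tau)$ on the good event, and combining with the first step finishes the proof. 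The main obstacle is precisely this last step: the map $\sigma^2\mapsto\sigma^{-2}$ amplifies downward deviations, so extracting exactly the claimed $u(m,s,\tau)$ rather than a slightly larger quantity requires the careful two-sided bookkeeping of Wainwright's original argument; everything else is standard Gaussian matrix concentration.
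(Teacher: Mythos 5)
The paper offers no proof of this lemma: it is imported verbatim from Wainwright (Lemma~9 of \cite{Wainwright:09}) and used as a black box, so there is no internal argument to compare against; your attempt has to stand on its own. The first two steps of your plan are fine and are the standard route (whitening $Z=XW^{-1/2}$, $\|W^{-1/2}\|_2^2=C_{\min}^{-1}$, then Gordon/Davidson--Szarek expectations plus Gaussian concentration of the $1$-Lipschitz maps $\sigma_{\max},\sigma_{\min}$, union bound costing $2\rme^{-m\tau^2/2}$).

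The genuine gap is the last step, and it cannot be repaired by ``absorbing higher-order terms.'' On your good event you only know $\sigma_{\min}(Z/\sqrt m)\ge 1-\delta$ with $\delta=\sqrt{s/m}+\tau$, and the extreme eigenvalue of $(Z^*Z/m)^{-1}-\Id_s$ coming from the lower edge is $(1-\delta)^{-2}-1=\frac{2\delta-\delta^2}{(1-\delta)^2}$. This is \emph{strictly larger} than $u=2\delta+\delta^2$ for every $\delta\in(0,1)$, since $(2\delta-\delta^2)-(2\delta+\delta^2)(1-\delta)^2=\delta^2(2-\delta^2)>0$; so no smallness of $\delta$ makes the two coincide, and invoking assumption (ai) is not legitimate anyway, because the lemma is a self-contained statement asserted for all $\tau>0$, independent of the setup (Si--iii) and (ai--aii). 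To obtain the stated constant along your route you would need $\sigma_{\min}(Z/\sqrt m)\ge(1+\delta)^{-1}$, which is strictly stronger than the bound $\sigma_{\min}\ge 1-\delta$ that Gordon plus concentration delivers (indeed $(1-\delta)(1+\delta)<1$). What your argument actually proves is the weaker estimate with $u(m,s,\tau)$ replaced by $\frac{2\delta-\delta^2}{(1-\delta)^2}$ (and it says nothing at all when $\delta\ge1$, a regime the statement formally covers). Recovering the lemma exactly as quoted requires Wainwright's own bookkeeping in his proof of Lemma~9, not this two-sided singular-value inclusion; as written, the proposal does not establish the claimed inequality.
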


\begin{proof}[Proof of Lemma~\ref{lemma:SigmaProbabilisticBound}]
We define $E$ to be the event that $\sigma(y,A) \leq t$ and $E_1$ to
be the event that the conclusion of Theorem~\ref{thm:WainwrightMainTheorem}
does not hold. 
Note that the matrix $A^*_SA_S$ is invertible with probability $1$, so we
assume throughout that it is indeed invertible.
		
On the event $E_1^{\mathsf{c}}$, there is a unique minimiser $x$ in $\mUL(y,A)$. Furthermore, $x$ has support $S$ and
$\sg(x_S) = \sg(v_S)$.
Thus if $E_1^{\mathsf{c}}$ occurs, 
\begin{enumerate}
\item $\sigma_1(y,A)= \lambda/2 - \|A_{S^{\mathsf{c}}}^*(Ax-y)\|_{\infty}$,
\item $\sigma_2(y,A):=   (\|(A^*_SA_S)^{-1}\|_2)^{-1}$,
\item $\sigma_3(y,A):= \min_{i \in S} |x_i|$.
\end{enumerate}
		
Our proof starts from the observation that 
\begin{equation} \label{eq:ProbEDecomposition}
\mathbb{P}(E) \leq \mathbb{P}(E \cup E_1)
=  \mathbb{P}(E_1) + \mathbb{P}[E_1^{\mathsf{c}}
  \cap (\sigma_1 \leq t)]
  + \mathbb{P}[ E_1^{\mathsf{c}}
  \cap (\sigma_2 \leq \sqrt{t})]
  + \mathbb{P} [E_1^{\mathsf{c}}  \cap (\sigma_3 \leq t)]
\end{equation}
together with the fact that Theorem~\ref{thm:WainwrightMainTheorem} 
gives us a bound for $\mathbb{P}(E_1)$.  
We must thus analyse the events $E_1^{\mathsf{c}} \cap (\sigma_1 \leq t)$,
$E_1^{\mathsf{c}} \cap (\sigma_2 \leq \sqrt{t})$
and $E_1^{\mathsf{c}}  \cap (\sigma_3 \leq t)$.
		
Starting with $E_1^{\mathsf{c}} \cap (\sigma_1 \leq t)$,
if $E_1^{\mathsf{c}}$ occurs then the only solution to the
lasso problem is the vector $x$ with
support exactly $S$. Thus the solution to the
restricted lasso problem~\eqref{def:RestrictedLasso}
is given by $x_S$. 
		
By the KKT conditions for~\eqref{def:RestrictedLasso}
at $x_S$, we have 
$A^*_S (A_S x_S - A_Sv_S - w) = A^*_S (A_S x_S - y) = -\lambda \sg(x_S)/2$
so that
$x_S - v_S = (A^*_SA_S)^{-1}[A^*_Sw - \lambda\sg(x_S)/2]$.
Therefore for $j \in S^{\mathsf{c}}$,
\begin{align*}
	A^*_j(Ax-y) &= A^*_j(A_S(x_S-v_S)-w)\\
	&= A^*_j(A_S(A^*_SA_S)^{-1}[A^*_Sw - \lambda \sg(x_S)/2]-w) \\
        &= -\frac{\lambda A^*_j\left[R\left(\frac{2w}{\lambda}\right)
        + A_S(A^*_SA_S)^{-1}\sg(x_S)\right]}{2} = -\frac{\lambda Z_j}{2}
\end{align*}
where $Z_j$ is defined as in \eqref{eq:ZjRDefinition}. 
		
Thus if $\sigma_1 \leq t$ then we must have
$\lambda/2 -t \leq \|A^*_{S^{\mathsf{c}}}(Ax-y)\|_{\infty}
= \lambda \max\limits_{j \in S^{\mathsf{c}}} |Z_j|/2$
and it follows that for any $\gamma \geq t$,
\begin{align}
\probab(\sigma_1 \leq t)
&\leq \probab\left(\max_{j \in S^{\mathsf{c}}} |Z_j|
> 1-\frac{2t}{\lambda}\right)\notag \\
&\leq 4 \rme^{-c_0 \min\{m\epsilon^2,s\}}
+ 2(N-s) \rme^{-\frac{(\gamma-2t/\lambda)^2}{2\rho_u \bar{M}_{N}(\epsilon)}}
\label{eq:ProbSigma1Fail}
\end{align}
the last inequality from Lemma~\ref{lemma:WainwrightSigma1}.
		
Next, let us analyse $\sigma_2$ assuming that the event $E_1^{\mathsf{c}}$ occurs.
If $\sigma_2 \leq \sqrt{t}$ then since $x$ is supported on $S$, we must have
$\|(A^*_SA_S)^{-1}\|_2 \geq 1/\sqrt{t}$. In particular, if this occurs then 
\begin{equation*}
\left \|\frac{(A^*_SA_S)}{m}^{-1} - (\Sigma_{SS})^{-1}\right\|_2
\geq (\sqrt{t} m)^{-1} - \|(\Sigma_{SS})^{-1}\|_2
= \frac{1}{m\sqrt{t}}-\frac{1}{C_{\min}}
= \frac{C_{\min} - m\sqrt{t}}{C_{\min}\, m\sqrt{t}}
\end{equation*}
		
Because $t < C_{\min}^2(\sqrt{s} + \sqrt{m})^{-4}$, some simple algebra gives
$
 \tau:= \sqrt{\frac{C_{\min}}{m\sqrt{t}}} - \sqrt{\frac{s}{m}} -1 > 0
$
and moreover if $u(m,s,\tau) = 2(\sqrt{s/m}+\tau) + (\sqrt{s/m}+\tau)^2$
then some more simple algebra yields 
$
  u(m,s,\tau) + 1
  = \frac{C_{\min}}{m\sqrt{t}}.
$
Thus 
\begin{align}
\probab (\|(A^*_SA_S)^{-1}\|_2 \geq 1/\sqrt{t})
&\leq \probab\left(\left \|\left(\frac{A^*_SA_S}{m}\right)^{-1}
 - (\Sigma_{SS})^{-1}\right\|_2 \geq \frac{C_{\min} - m\sqrt{t}}{C_{\min}\,
 m\sqrt{t}}\right)\notag \\
&= \probab\left(\left \|\left(\frac{A^*_SA_S}{m}\right)^{-1}
 - (\Sigma_{SS})^{-1}\right\|_2
\geq \frac{u(m,s,\tau)}{C_{\min}}\right)\notag\\
&\leq 2\rme^{-\frac{\left(\sqrt{C_{\min}} - (\sqrt{s}+\sqrt{m})t^{1/4}\right)^2}{2t^{1/2}}}
\label{eq:ProbSigma2Fail}
\end{align}
where the final inequality follows from Lemma~\ref{lemma:WainwrightSigma2}.
		
We can immediately see that $\sigma_3(y,A) \leq t$ and $E_1^{\mathsf{c}}$
cannot occur simultaneously: if $E_1^{\mathsf{c}}$ occurs then for $j \in S$
we have 
$|v_j| - |x_j| \leq |v_j - x_j| \leq g(\lambda)$ and so
$\sigma_3 (y,A) = {\displaystyle\min_{j\in S}} |x_j|
\geq {\displaystyle{\min_{j \in S}}}|v_j| - g(\lambda)> t$.
We conclude that
\begin{equation}
 \probab [E_1^{\mathsf{c}}  \cap (\sigma_3 \leq t)] = 0.
 \label{eq:ProbSigma3Fail}
\end{equation}
Equation \eqref{eq:ProbSigmaBound} follows from~\eqref{eq:ProbEDecomposition},
Theorem~\ref{thm:WainwrightMainTheorem}, \eqref{eq:ProbSigma1Fail},
\eqref{eq:ProbSigma2Fail} and~\eqref{eq:ProbSigma3Fail}. 
		
All that remains is to prove \eqref{eq:SigmaProbabilisticSigmaHatBound}.
We first claim that with the specific choice of $\hat{\sigma}$ we have
\begin{equation}\label{eq:ComplicatedSigmaHat}
  (N-s)\rme^{-\frac{(\gamma - 2\hat{\sigma}/\lambda)^2}
  {2\rho_u \bar{M}_N(\epsilon)}}\leq \rme^{\frac{-\ln(N-s)}{2}}.
\end{equation}
Indeed, since $\hat{\sigma} < \gamma \lambda/4$ we have
$-(\gamma - 2\hat{\sigma}/\lambda)^2/(2\rho_u \bar{M}_N(\epsilon))
\leq -\gamma^2/(8\rho_u \bar{M}_N(\epsilon))$.
Using the definition of $\rho_u$ gives
$\gamma^2/(8\rho_u \bar{M}_N(\epsilon)) =
(8C_{\min}\theta_u \bar{M}_N(\epsilon))^{-1}$. Also,
equation~\eqref{eq:normalMRequiredMeasurements} implies
\begin{equation}\label{eq:m1}	
\frac{m}{1+\epsilon} >
12s\ln(N-s)\theta_u + \frac{6m\ln(N-s)}{\ln(N)\phi_N}
\end{equation}
and hence,
\begin{eqnarray*}
\frac{1}{8C_{\min}\theta_u \bar{M}_N(\epsilon)}
&\underset{\eqref{eq:MnDefinition}}{=} &
\frac{m}{8(1+\epsilon)}
\left(s\theta_u + \frac{m}{2\ln(N)\phi_N}\right)^{-1}\\
&\underset{\eqref{eq:m1}}{>} &
\ln (N-s) \left(\frac18\left(12s\theta_u + \frac{6m}{\ln(N)\phi_N}\right)\right)
\left(s\theta_u + \frac{m}{2\ln(N)\phi_N}\right)^{-1}\\
&=& \frac32\ln(N-s).
\end{eqnarray*}
It follows that 
$$
\ln(N-s) -\frac{1}{8C_{\min}\theta_u \bar{M}_N(\epsilon)}
< \ln(N-s)-\frac32\ln(N-s) = -\frac{\ln(N-s)}{2}
$$
and thus
\begin{equation*}
\rme^{\ln(N-s)-\frac{(\gamma - 2\hat{\sigma}/\lambda)^2}
{2\rho_u \bar{M}_N(\epsilon)}}
\leq \rme^{\ln(N-s) -\frac{1}{8C_{\min}\theta_u \bar{M}_N(\epsilon)}}
\leq \rme^{\frac{-\ln(N-s)}{2}}
\end{equation*}
proving the claim. 
		
We further
claim that
\begin{equation*}
2\rme^{-\frac{\left(\sqrt{C_{\min}} - (\sqrt{s}+\sqrt{m})t^{\frac14}\right)^2}{t^{\frac12}}}
\end{equation*}
is increasing in $t$ when
$(\sqrt{s}+\sqrt{m})t^{\frac14} \leq \sqrt{C_{\min}}$.
Indeed, the function
$f(t^4) :=  -(\sqrt{C_{\min}} - (\sqrt{s}+\sqrt{m})t)^2/t^{2}
= - (\sqrt{C_{\min}}/t - (\sqrt{s}+\sqrt{m}))^2$ is increasing in $t$
provided that
$t < \sqrt{C_{\min}}/(\sqrt{s}+\sqrt{m})$.
With this result and using that
$\hat{\sigma}\le \frac{C_{\min}^2}{4(\sqrt{s}+\sqrt{m})^4}$
(see Lemma~\ref{lemma:SigmaProbabilisticBound}), we see that
\begin{align}\label{eq:CMin-DeltaBound}
2\rme^{-\frac{\left(\sqrt{C_{\min}} - (\sqrt{s}+\sqrt{m})\hat{\sigma}^{\frac14}\right)^2}
{\hat{\sigma}^{\frac12}}}
&\leq  2\rme^{-2(\sqrt{s}+\sqrt{m})^2\frac{\left[\sqrt{C_{\min}}
- \sqrt{C_{\min}}/(4^{\frac14})\right]^2}{C_{\min}}} \\
&= 2\rme^{-2(1-4^{-1/4})^2(\sqrt{s}+\sqrt{m})^2}
\leq 2\rme^{-s/3}\nonumber
\end{align}
since $2(1-4^{-1/4})^2(\sqrt{s}+\sqrt{m})^2 \geq (\sqrt2-1)^2s \geq s/3$.

We can now conclude \eqref{eq:SigmaProbabilisticSigmaHatBound} in
the following way, starting from \eqref{eq:ProbSigmaBound},
\begin{eqnarray*}
\probab(\sigma \leq t)
&\leq&  4 \rme^{-c_6 \min\{m\epsilon^2,s\}} +
2(N-s) \rme^{-\frac{(\gamma-2t/\lambda)^2}
{2\rho_u \bar{M}_{N}(\epsilon)}}
+c_4\rme^{-c_5 \min\{s,\ln(N-s)\}}
+ 2\rme^{-\frac{\big(\sqrt{C_{\min}} - 
(\sqrt{s}+\sqrt{m})t^{\frac14}\big)^2}{t^{\frac12}}}\\
&\underset{\eqref{eq:ComplicatedSigmaHat}}\leq &
4 \rme^{-c_6 \min\{m\epsilon^2,s\}} + 2\rme^{-\ln(N-s)/2}
+ c_4\rme^{-c_5 \min\{s,\ln(N-s)\}}
+ 2\rme^{-\frac{\big(\sqrt{C_{\min}} - 
(\sqrt{s}+\sqrt{m})t^{\frac14}\big)^2}{t^{\frac12}}}\\
&\underset{\eqref{eq:CMin-DeltaBound}}
\leq& 4 \rme^{-c_6 \min\{m\epsilon^2,s\}} + 2\rme^{-\ln(N-s)/2}
+ c_4\rme^{-c_5 \min\{s,\ln(N-s)\}} +  2\rme^{-s/3} \\
&\underset{{\mathrm{(ai)}}}{\leq} &\max\{4,c_4\}
\rme^{-\min\{c_6,1/2,c_5,1/6\}\min\{s,\ln(N-s)\}}
\end{eqnarray*}
and thus we have shown both \eqref{eq:ProbSigmaBound}
and~\eqref{eq:SigmaProbabilisticSigmaHatBound}, completing the proof.
\end{proof}

\section{Proof of Theorem~\ref{cor:simple}}

\begin{lemma}\label{lem:normal}
Let $w\in\real^q$ be random with i.i.d.~components distributed
as $\mcN(0,1)$. Then, for all $t>0$, 
$
  \probab(\|w\|_\infty \le t) \ge
  1-\frac{2q}{t\sqrt{2\pi}}\rme^{-\frac{t^2}{2}}.
$
\end{lemma}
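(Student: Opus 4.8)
The plan is a straightforward union bound combined with the classical Gaussian tail (Mills ratio) estimate. First I would reduce to the one-dimensional case: since $\|w\|_\infty = \max_{i=1,\dots,q} |w_i|$, we have $\{\|w\|_\infty > t\} = \bigcup_{i=1}^q \{|w_i| > t\}$, so by the union bound $\probab(\|w\|_\infty > t) \le \sum_{i=1}^q \probab(|w_i| > t) = q\,\probab(|X| > t)$, where $X \sim \mcN(0,1)$. Taking complements, it suffices to show $\probab(|X| > t) \le \frac{2}{t\sqrt{2\pi}}\rme^{-t^2/2}$.

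Next I would bound the single-variable tail. By symmetry $\probab(|X| > t) = 2\probab(X > t) = 2\int_t^\infty \frac{1}{\sqrt{2\pi}}\rme^{-x^2/2}\,\rmd x$. On $[t,\infty)$ we have $x/t \ge 1$, so $\int_t^\infty \rme^{-x^2/2}\,\rmd x \le \frac{1}{t}\int_t^\infty x\,\rme^{-x^2/2}\,\rmd x = \frac{1}{t}\rme^{-t^2/2}$. Hence $\probab(|X| > t) \le \frac{2}{t\sqrt{2\pi}}\rme^{-t^2/2}$, and combining with the union bound gives $\probab(\|w\|_\infty > t) \le \frac{2q}{t\sqrt{2\pi}}\rme^{-t^2/2}$, which is exactly the claimed bound after passing to the complement.

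There is no genuine obstacle here; the only point requiring any care is the Mills-ratio step, and even that is a one-line integration trick. If one prefers to avoid quoting the Mills ratio, the inequality $\int_t^\infty \rme^{-x^2/2}\,\rmd x \le \frac{1}{t}\rme^{-t^2/2}$ can be verified directly by noting the antiderivative of $x\,\rme^{-x^2/2}$ is $-\rme^{-x^2/2}$. I would keep the write-up to a few lines, stating the union bound, the symmetry reduction, and the tail integral estimate in sequence.
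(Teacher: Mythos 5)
Your proposal is correct and follows essentially the same route as the paper: a union bound over the $q$ coordinates combined with the standard Gaussian tail estimate $\probab(|X|\ge t)\le \frac{2}{t\sqrt{2\pi}}\rme^{-t^2/2}$. The only difference is that the paper cites this tail bound from a reference while you derive it via the one-line Mills-ratio computation, which is a perfectly acceptable (and self-contained) substitute.
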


\begin{proof}
We have $\probab(|w_i|\ge t)\le \frac{2}{t\sqrt{2\pi}}
\rme^{-\frac{t^2}{2}}$ by~\cite[Lemma~2.16]{Condition}. Hence,
$$
 \probab(\|w\|_\infty \ge t) =
 \probab(\vee_{i\le q} |w_i| \ge t) \le
 \frac{2q}{t\sqrt{2\pi}}\rme^{-\frac{t^2}{2}}
$$
and the result follows.
\end{proof}

\begin{proof}[Proof of Corollary~\ref{cor:simple}]
We will use the notations in Theorem~\ref{thm:NormalCondWainwright}
and Corollary~\ref{cor:isotropic}. Also, we take
$\bar{c}:=\max\{c_3,1\}$ where $c_3$ is the universal constant
in (aii').

Because $m,s\le N/9<N/8$ we have
$\frac{1}{4(\sqrt{s}+\sqrt{m})^4}> \frac{1}{N^2}$. Also,
by hypothesis (ii) and since $\eta=0$,
${\displaystyle \min_{j\in S}}|v_j|-g(\lambda)\ge \frac{1}{N^2}$. Along with
hypothesis (iii) it follows that
\begin{equation}\label{eq:bound-hat-sigma}
\hat{\sigma}\ge \frac{1}{N^2}.
\end{equation}
Similarly, using that $m\le N/9$, 
\begin{equation}\label{eq:bound-hat-alpha}
  \hat{\alpha}\le \max\left\{\sqrt{2N}\|v\|_2,\sqrt{N}+6\sqrt{N}\right\}
 \le 7\,\trip{v}_2\sqrt{N}.
\end{equation}
Finally, because 
$\lambda\le\frac{2N\trip{v}_2}{\bar{c}}$
(by (ii)) and $\bar{c}\ge1$, $\lambda\sqrt{N}\le 2N^{1.5}\trip{v}_2$
and, hence,
\begin{equation}\label{eq:bound-lambda}
  (1+\lambda\sqrt{N})\le 3N^{1.5}\trip{v}_2.
\end{equation}
It follows that
\begin{eqnarray*}
 \frac{q(\hat{\alpha},\hat{\sigma})}{\hat{\sigma}^2}
 &=& \frac{96\hat{\alpha}^5}{\hat{\sigma}^2}
 +\frac{12\hat{\alpha}^3(1+\lambda \sqrt{N})}{\hat{\sigma}^{1.5}}
	+ \left(\frac{2\hat{\alpha}^3}{\lambda}
        + 3\hat{\alpha}\right)\frac{1}{\hat{\sigma}}\\
&\underset{\eqref{eq:bound-lambda},{\mathrm{(iii)}}}{\le}&
\frac{96\hat{\alpha}^5}{\hat{\sigma}^2}
 +\frac{36\hat{\alpha}^3 N^{1.5}\trip{v}_2}{\hat{\sigma}^{1.5}}
	+ \frac{\hat{\alpha}^3N^2
        + 3\hat{\alpha}}{\hat{\sigma}}\\
&\underset{\eqref{eq:bound-hat-alpha}}{\le}&
\frac{96\cdot7^5\cdot N^{2.5}\trip{v}_2^5}{\hat{\sigma}^2}
 +\frac{36\cdot 7^3\cdot N^{3}\trip{v}_2^4}{\hat{\sigma}^{1.5}}
	+ \frac{7^3 N^{3.5}\trip{v}_2^3
        + 21\,N^{0.5}\trip{v}_2}{\hat{\sigma}}\\
&\underset{\eqref{eq:bound-hat-sigma}}{\le}&     
  96\cdot7^5\cdot N^{6.5}\trip{v}_2^5
 +36\cdot7^3\cdot N^{6}\trip{v}_2^4
 + 7^3 N^{5.5}\trip{v}_2^3
 + 21\,N^{2.5}\trip{v}_2\\
&\le& 1626184\, N^{6.5}\trip{v}_2^5.
\end{eqnarray*}
Also,$
 \frac{6\hat{\alpha}}{\sqrt{\hat{\sigma}}}
 \leq 42 N^{1.5}\trip{v}_2.
$
We conclude that 
\begin{equation}\label{eq:boundK}
  \hat{K}\le (mN)^\frac12
  \max\left\{\frac{q(\hat{\alpha},\hat{\sigma})}{\hat{\sigma}^2},
     \frac{6\hat{\alpha}}{\sqrt{\hat{\sigma}}}\right\}
     \le \frac{1626184}{3} N^{7.5} \trip{v}_2^5 < 542062 N^{7.5}\trip{v}_2^5. 
\end{equation}

We next note that hypotheses (i) and (ii) in our
statement imply that (ai') and (aii') are satisfied. Hence,
Corollary~\ref{cor:isotropic} (with $\eta=0$) may be applied to
deduce that
$$
 \probab(\condfsul(b,U) \geq \widehat{K} )
 \leq \bfc_1 \rme^{-\bfc_2\min\{\ln(N-s),s\}} \leq \bfc_1\rme^{-\bfc_2\ln(N/2)}
 \leq \bfc_1 \left(\frac{N}{2}\right)^{-\bfc_2}
$$
the second inequality by our hypothesis on $s$.

Because of Lemma~\ref{lem:normal} with $q=mN$ and $t=N^2$, we have
$$
\probab(\|U\|_{\max}\le N^2)\ge
 1-\frac{2mN}{N^2\sqrt{2\pi}}\rme^{-\frac{t^2}{2}}
 \ge 1-\rme^{-\frac{N^4}{2}}.
$$
Also, $\|b\|_\infty\leq \|U\|_{2\infty}\|v\|_2 \leq \sqrt{N}\|U\|_{\max}\|v\|_2$.
Therefore, $\triple{b,U}_{\max}\leq N^{2.5}\trip{v}_2$ and
 \begin{align}
\lambda + \lambda^{-1} &\leq \frac{2N\|v\|_2 }{\bar{c}} + N^2/2 \leq 2N^2\trip{v}_2 \label{eq:corllinvBound},\\
\nTrOneStar{b,U} &\leq mN \nTrMax{b,U} \leq N^{4.5} \trip{v}_2\label{eq:corntronestarbound}, 
\end{align}
with probability at least $1-\rme^{-\frac{N^4}{2}}$. 

Using~\eqref{eq:boundK} and Theorem~I.1.2, 
we conclude that the cost of the algorithm
for random $(b,U)$ satisfying our assumptions is bounded by
$\Oh\big(N^3(\log_2 N^{14.5} \trip{v}_2^7)^2\big) $
and using ~\eqref{eq:boundK},~\eqref{eq:corllinvBound} and~\eqref{eq:corntronestarbound}, the maximum number of digits accessed by the algorithm is bounded by
\[
\Oh(\left\lceil \log_2\left(\max\{\lambda + \lambda^{-1},N,\nTrOneStar{b,U},
\condfsul(b,U)\}\right)\right\rceil) = \Oh(\log_2(N\trip{v}_2))
\]	
with probability at least
$1-(\bfc_1 \left(\frac{N}{2}\right)^{-\bfc_2}
+\rme^{-\frac{N^4}{2}})\ge 1-\bfC_1 N^{-\bfC_2}$ for some appropriately
chosen constants $\bfC_1,\bfC_2$. Because the hypotheses of
Theorem~\ref{thm:WainwrightMainTheorem} hold it follows that
with probability greater than
$1-c_1 \rme^{-c_2\min\{\ln(N-s),s\}} \geq 1- c_1 \left(\frac{N}{2}\right)^{-c_2}$ 
we also have that $\supp(x)=\supp(v)$ (here $\mUL(b,U)=\{x\}$).
The result follows by changing, if necessary, the values of $\bfC_1$
and $\bfC_2$.
\end{proof}

\section{Proof of Theorem \ref{thm:failure_bounds}}

\newcommand{\dyadvi}[1]{d_{#1}^{\mathrm{vec}}}
\newcommand{\dyadmi}[1]{d_{#1}^{\mathrm{mat}}}
\newcommand{\dyadvmi}[1]{(\dyadvi{#1},\dyadmi{#1})}

\subsection{Definitions for the computational problem}
We start by recalling some definitions from \cite{BCH1}:
\begin{definition}[The LASSO computational problem]
	\label{definition:LassoComputationalProblem}
	For some set $\Omega \subset \real^{m} \times \real^{m \times N}$,
	which we call the \emph{input} set, the \emph{LASSO
		computational problem on} $\Omega$ is the collection
	$\{\Xi,\Omega, \mathbb{B}^N,\Lambda\}$ where $\Xi:\Omega\to2^{\mathbb{B}^N}$
	is defined
	as in~\eqref{eq:LassoComp} and
	\[
	\Lambda = \{f^{\mathrm{vec}}, f^{\mathrm{mat}}\} \text{ with }
	f^{\mathrm{vec}}:\Omega\to\real^m, f^{\mathrm{mat}}:\Omega\to\real^{m\times N}
	\]
	are defined by $f^{\mathrm{vec}}(y,A) = y$ and
	$f^{\mathrm{mat}}(y,A) = A$ for all $(y,A) \in \Omega$.
\end{definition}
\newcommand{\dyadic}{\mathbf{D}}
\newcommand{\pr}{\mathbb{P}}
We want to generalise the LASSO computational problem so that we work
with \emph{inexact inputs}. To do so, 
we will consider the collection
of all functions $f_{n}^{\mathrm{vec}}: \Omega \to \real^m$
and $f_{n}^{\mathrm{mat}}: \Omega \to \real^{m\times N}$ satisfying
\begin{align}\label{eq:Lambda_y}
	\|f_{n}^{\mathrm{vec}}(y,A) - y\|_{\infty} &\leq 2^{-n}, \quad  \nmax{f_{n}^{\mathrm{mat}}(y,A)-A} \leq 2^{-n} 
\end{align}

\begin{definition}[Inexact LASSO computational problem]
	\label{definition:Omega_tilde_Delta_m}
	The \emph{inexact LASSO computational problem on $\Omega$} (ILCP) is the
	quadruple
	$
	\{\tilde \Xi,\tilde \Omega,\mathbb{B}^N,\tilde \Lambda\},
	$
	where
	\begin{equation}
		\begin{split}
			\tilde \Omega = \big\{ &\tilde \iota =
			\{(f_{n}^{\mathrm{vec}}(\iota),f_{n}^{\mathrm{mat}}(\iota)\}_{n\in\nat}
			\mid \iota = (y,A) \in \Omega \text { and }\\
			&f_{n}^{\mathrm{vec}}: \Omega \to \real^m,
			f_{n}^{\mathrm{mat}}:\Omega\to\real^{m\times N}
			\text{ satisfy~\eqref{eq:Lambda_y} respectively}\big\}
		\end{split}
	\end{equation}
	It follows from~\eqref{eq:Lambda_y} that there is a unique
	$\iota =(y,A)\in\Omega$ for which
	$
	\tilde \iota = \big\{(f_{n}^{\mathrm{vec}}(\iota),
	f_{n}^{\mathrm{mat}}(\iota))\big\}_{n\in\nat}.
$
	We say that this $\iota\in\Omega$ \emph{corresponds} to
	$\tilde\iota\in\tilde\Omega$ and we set
	$\tilde \Xi: \tilde \Omega \rightrightarrows \mathbb{B}^N$ so
	that $\Xi(\tilde \iota) = \Xi(\iota)$, and
	$\tilde \Lambda = \{\tilde f_{n}^{\mathrm{vec}},\tilde
	f_{n}^{\mathrm{mat}}\}_{n \in \nat}$,
	with $\tilde f_{n}^{\mathrm{vec}}(\tilde\iota) = f_{n}(\iota)$,
	$\tilde f_{n}^{\mathrm{mat}}(\tilde\iota) = f_{n}^{\mathrm{mat}}(\iota)$
	where $\iota$ corresponds to $\tilde \iota$.
\end{definition}
\begin{definition}[General Algorithms for the ILCP]
	\label{definition:Algorithm}
	A \emph{general algorithm} for 
	$\{\tilde \Xi,\tilde \Omega,\mathbb{B}^N,\tilde \Lambda\}$,
	is a mapping 
	$\Gamma:\tilde\Omega\to\mathbb{B}^N$
	such that, for every $\tilde \iota\in\tilde \Omega$,
	the following conditions hold:
	\begin{enumerate}[label=(\roman*)]
		\item there exists a nonempty subset of evaluations
		$\Lambda_\Gamma(\tilde \iota) \subset\tilde\Lambda $ with 
		$|\Lambda_\Gamma(\tilde \iota)|<\infty$\label{property:AlgorithmFiniteInput},
		\item the action 
		of $\,\Gamma$ on $\tilde\iota$ is uniquely determined by
		$\{f(\tilde \iota)\}_{f \in \Lambda_\Gamma(\tilde\iota)}$,
		\label{property:AlgorithmDependenceOnInput}
		\item for every $\iota^{\prime} \in\Omega$ such that
		$f(\iota^\prime)=f(\tilde\iota)$
		for all $f\in\Lambda_\Gamma(\tilde\iota)$, it holds that
		$\Lambda_\Gamma(\iota^{\prime})=\Lambda_\Gamma(\tilde\iota)$.
		\label{property:AlgorithmSameInputSameInputTaken}
	\end{enumerate}
\end{definition}
Specific to this paper is the study of \emph{finite precision algorithms} and \emph{algorithms that are correct on all inputs that can be represented exactly}. To define these concepts, we first define the following, which is similar to \cite[Definition 8.23]{opt_big}.
\begin{definition}[Number of correct `digits' for the ILCP]\label{def:no-of-input-bits}
	Given a general algorithm $\Gamma$ for the inexact LASSO computational problem, we define the \emph{`number of  digits' required on the input} according to
	\[
	D_{\Gamma}(\tilde \iota) :=\sup\{m\in\mathbb{N} \, \vert \,  \text{at least one of } f_{m}^{\mathrm{vec}},f_{m}^{\mathrm{mat}} \in  \Lambda_{\Gamma}(\tilde \iota) \}.
	\]
\end{definition}
\begin{definition}[Finite precision algorithm]
	\label{definition:FPAlgorithm}
	A \emph{finite precision general algorithm} with precision $2^{-k}$ for 
	$\{\tilde \Xi,\tilde \Omega,\mathbb{B}^N,\tilde \Lambda\}$,
	is a general algorithm
	$\Gamma:\tilde\Omega\to\mathbb{B}^N$
	such that, for every $\tilde \iota\in\tilde \Omega$,
	the number of correct digits
	$D_{\Gamma}(\tilde \iota) \leq k$.
\end{definition}
\begin{definition}\label{def;correct}
	A general algorithm $\Gamma$ for the ILCP defined on $\Omega \subset \real^m \times \real^{m \times N}$ is said to be \emph{correct on all inputs that can be represented exactly in $\Omega$} if for all $y \in \dyadic^m$ and $A \in \dyadic^{m \times N}$ with $(y,A) \in \Omega$, we have 
	$\Gamma(\tilde \iota) \in \Xi(y,A)$ whenever  \[
	\tilde \iota = ((f_{1}^{\mathrm{vec}}(y,A),f_{1}^{\mathrm{mat}}(y,A))
	(f_{2}^{\mathrm{vec}}(y,A),f_{2}^{\mathrm{mat}}(y,A)),\dotsc )
	\] with  $f_{n}^{\mathrm{vec}}: \Omega \to \real^m$
	and $f_{n}^{\mathrm{mat}}: \Omega \to \real^{m\times N}$ defined for all $n \in \mathbb{N}$ so that
	$
		f_{n}^{\mathrm{vec}}(y,A) =y\quad  f_{n}^{\mathrm{mat}}(y,A)=A .
	$
\end{definition}
\subsection{Proof of Theorem \ref{thm:failure_bounds}}
We begin by stating and proving two preliminary lemmas:
\begin{lemma} \label{lem:existsOpenFail}
	Suppose that the open set $T \in \real^m \times \real^{m \times N}$ contains a point with condition infinity. Then for any $S_1 \subseteq\{1,2,\dotsc,N\}$ there exists a point $P = (y^P,A^p) \in T$ and an $\epsilon > 0$ so that if $\|y - y^P\|_{\infty} \leq \epsilon$, $\|A - A^p\|_{\infty} \leq \epsilon$ then $\Xi(y,A) = \{S_2\}$ with $S_2 \neq S_1$.
\end{lemma}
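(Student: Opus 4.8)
The plan is to reduce the lemma to the existence of a single \emph{well-posed} point $P\in T$ whose unique LASSO support differs from $S_1$, and then to locate such a $P$ near a point of condition infinity. Write $(b,U)$ for a point of $T$ with $\condfsul(b,U)=\infty$, i.e.\ $\stabsupp(b,U)=0$ by Definition~\ref{def:cond}, and use openness of $T$ to fix $r>0$ with $\mathcal{B}_\infty(b,U,r)\subset T$. Suppose we have produced $P=(y^P,A^P)\in\mathcal{B}_\infty(b,U,r)$ with $0<\stabsupp(P)<\infty$; let $S_2$ be the (unique) element of $\Xi(P)$, and set $\epsilon:=\tfrac12\stabsupp(P)>0$. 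Then the stability-radius property recalled just after Definition~\ref{def:stabSupp} (``for all pairs in a ball of radius $\stabsupp(P)$ around $P$ we have $\Xi=\{S_2\}$'') gives $\Xi(y,A)=\{S_2\}$ whenever $\|y-y^P\|_\infty\le\epsilon$ and $\|A-A^P\|_{\max}\le\epsilon$, so it only remains to arrange $S_2\neq S_1$.

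Next I would extract two distinct supports near $(b,U)$. Fix $\delta\in(0,r)$. Since $\stabsupp(b,U)=0$, Definition~\ref{def:stabSupp} supplies $(\tilde y,\tilde A)\in\mathcal{B}_\infty(b,U,\delta)$, a vector $x\in\mUL(b,U)$ and a vector $\tilde x\in\mUL(\tilde y,\tilde A)$ with $\supp(x)\neq\supp(\tilde x)$. Of these two sets, at most one equals the prescribed $S_1$, so at least one differs from it; call it $T_\star$, and let $q_\star$ be the corresponding input ($q_\star=(b,U)$ if $T_\star=\supp(x)$, and $q_\star=(\tilde y,\tilde A)$ otherwise). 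Thus $q_\star\in\mathcal{B}_\infty(b,U,\delta)$ and $\mUL(q_\star)$ contains a minimiser $z$ with $\supp(z)=T_\star\neq S_1$.

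The decisive step is a \emph{stabilisation} claim: if $z\in\mUL(q)$, then for every $\rho>0$ there is a well-posed point $P$ with $\|P-q\|_{\max}<\rho$ and $\Xi(P)=\{\supp(z)\}$. Granting this and applying it at $q=q_\star$ with $\rho=r-\delta$, the resulting $P$ satisfies $\|P-(b,U)\|_{\max}<(r-\delta)+\delta=r$, hence $P\in\mathcal{B}_\infty(b,U,r)\subset T$, is well-posed, and has $\Xi(P)=\{T_\star\}$ with $T_\star\neq S_1$ --- exactly what the first paragraph requires. I would establish the stabilisation claim from the subgradient/KKT optimality conditions for the unconstrained LASSO (conditions (UL4)--(UL5) of Part~I, as used in the proof of Lemma~\ref{lemma:1DZeroSupportRequirement}): perturb $q$, adjusting $z$ continuously, so that at the perturbed data the residual correlation on $\supp(z)$ sits \emph{strictly} inside the active constraint and strictly below $1$ in modulus off $\supp(z)$, while $A_{\supp(z)}$ stays of full column rank; a sufficiently generic small perturbation achieves all three simultaneously, and then $z$ is the unique minimiser with a support that survives all further small perturbations, so $\stabsupp(P)>0$ by Definition~\ref{def:stabSupp}. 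If Part~I already records such a statement --- for instance that $S\in\Xi(q)$ forces $\Xi(P)=\{S\}$ for well-posed $P$ arbitrarily close to $q$ --- I would simply cite it in place of this argument.

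The main obstacle is precisely this stabilisation/isolation claim; the rest is bookkeeping with the stability radius and the openness of $T$. The delicate point is that one perturbation of $q$ must at once (i) preserve the sign pattern of $z$ on $\supp(z)$, (ii) make every inactive subgradient coordinate strictly sub-unit, and (iii) keep the active submatrix invertible, so that Definition~\ref{def:stabSupp} genuinely yields $\stabsupp(P)>0$ rather than merely $\Xi(P)=\{\supp(z)\}$ at the single point $P$. A minor additional point is that $\mUL$ is non-empty on all the inputs involved, which follows from coercivity of the LASSO objective.
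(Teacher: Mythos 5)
Your overall architecture matches the paper's: locate, near the ill-posed point, an input $q_\star$ admitting a minimiser whose support $W$ differs from $S_1$; perturb $q_\star$ to a well-posed point $P$ with $\Xi(P)=\{W\}$; then use $\stabsupp(P)>0$ to get the $\epsilon$-ball. Your route to the first step (reading off two distinct supports directly from $\stabsupp(b,U)=0$) is a harmless variant of the paper's (which argues by contradiction: if $\Xi\equiv\{S_1\}$ on the open set $T$ then no point of $T$ could have infinite condition). The substantive issue is the ``stabilisation claim'', which you correctly identify as the main obstacle but do not close.

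As written, the justification ``a sufficiently generic small perturbation achieves all three simultaneously'' does not work. A generic perturbation of $(y,A)$ destroys the KKT equality $A_j^*(Az-y)=-\tfrac{\lambda}{2}\sg(z_j)$ on $\supp(z)$, so $z$ is no longer a minimiser; the perturbed problem's minimiser must be re-solved, and while it is generically unique, there is no reason its support equals $\supp(z)$ rather than some other member of $\Xi(q_\star)$ (indeed, when $|\mUL(q_\star)|\ge 2$ different perturbation directions select different supports --- that is the whole point of the lemma). Also, on $\supp(z)$ the dual constraint must remain \emph{active} (an equality), not ``strictly inside'' as you write; strictness is wanted only off the support. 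The paper resolves this with a specific, structured perturbation: it leaves $y$ and the columns $A_W$ untouched and replaces $A$ by $A(I-\delta E)$, shrinking only the columns indexed by $W^{\mathsf{c}}$. This preserves $z$'s optimality exactly (the support-side KKT conditions are unchanged) while scaling every off-support correlation by $(1-\delta)$, so $\sigma_1\ge\delta\lambda/2$; uniqueness (Lemma~I.8.8) gives $\sigma_2,\sigma_3>0$; and Proposition~\ref{proposition:rhofssigmalb} then converts $\sigma>0$ into the quantitative conclusion $\stabsupp(P)>0$ that your first paragraph needs. You correctly flag that $\Xi(P)=\{\supp(z)\}$ at the single point $P$ is not enough, but you should invoke Proposition~\ref{proposition:rhofssigmalb} (which is available to you) explicitly, and replace ``generic perturbation'' by this column-rescaling construction; with those two repairs your argument coincides with the paper's.
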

\begin{proof}
	Since there is a point with condition infinity inside $T$, there must exist some point $y^1,A^1$ so that $\{S_1\} \neq \Xi(y^1,A^1)$ (otherwise $\Xi(b,U) = \{S_1\}$ for all $(b,U) \in T$ and hence $\cond{b,U} < \infty$ since $T$ is open). In particular,  either $\Xi(y^1,A^1) = \{W\} \neq \{S_1\}$ in which case $\mULMS(y^1,A^1) = \Xi(y^1,A^1) = \{W\} \neq \{S_1\}$ or, by Lemma~I.8.2 $|\mUL(y^1,A^1)| \geq 2$ in which case there must exist $W \in \mULMS(y^1,A^1)$ with $W \neq S$. 
	
	From Lemma~I.8.8 if we set $x \in \mUL(y^1,A^1)$ with $\supp(x) = W$ and $E$ to be the diagonal matrix with entries $(\indic_{1 \in W^c}, \indic_{2 \in W^c},\indic_{3 \in W^c},\dotsc\indic_{N \in W^c})$ on the diagonal, then $x = \mUL(y^1,A^1(I-\delta E))$ for $\delta > 0$ sufficiently small. Since $x^1$ is the unique vector in $\Sol(y^1,A^1(I-\delta E))$, we must have $\sigma_2(y^1,A^1(1-\delta E)) > 0$. We also have 
	\begin{align*}
		\|[A^1(I-\delta E)]^*_{S^c} (A^1x - y^1)\|_{\infty} &=  \|(I_{S^c} - \delta E_{S^c})(A^1)^*_{S^c} (A^1x-y^1)\|_{\infty}\\
		&=  \|(1-\delta)I(A^1)^*_{S^c} (A^1x-y^1)\|_{\infty}\\
		& = (1-\delta)\|(A^1)^*_{S^c} (A^1x-y^1)\|_{\infty} \leq (1-\delta) \lambda/2
	\end{align*}
	and thus $\sigma_1(y^1,A^1(I-\delta E)) \geq \delta \lambda/2$. Finally, $\sigma_3(y^1,A^1(I-\delta E)) > 0$ since $\mUL(y^1,A^1(I-\delta E))$ is a singleton. We have thus shown that $\sigma(y^1,A^1(I-\delta E)) > 0$. Thus by Proposition \ref{proposition:rhofssigmalb} we must have $\stabsupp(y^1,A^1(I - \delta E)) > 0$ and thus if we choose $\delta$ sufficiently small and positive so that $P = (y^1,A^1(I-\delta E)) \in S$ then there exists an $\epsilon > 0$ so that if $\|y - y^P\|_{\infty} \leq \epsilon$, $\|A - A^p\|_{\infty} \leq \epsilon$ then $\Sol(y,A) = x^1$ and in particular $\Xi(y,A) = \supp(x) = W \neq S$.
\end{proof}
\begin{lemma}\label{lem:FinitePrecisionSameInputOutput}
	Suppose that an inexact algorithm $\Gamma: \tilde \Omega \to \mathcal{M}$ is a finite precision algorithm with precision $k$. If there are sequences $\{s^1_n\}_{n=1}^{\infty} \in \tilde \Omega$ and $\{s^2_n\}_{n=1}^{\infty} \in \tilde \Omega$ such that $s^1_i = s^2_i$ for $i=1,2,\dotsc,k$, then $\Gamma(\{s^1_n\}_{n=1}^{\infty}) = \Gamma(\{s^2_n\}_{n=1}^{\infty})$.
\end{lemma}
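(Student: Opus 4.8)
The plan is to unwind the axioms in Definition~\ref{definition:Algorithm} for a general algorithm, together with the precision constraint of Definition~\ref{definition:FPAlgorithm}; there is no analytic content, so the whole argument is a formal consequence of the definitions. Write $\tilde\iota^1=\{s^1_n\}_{n=1}^{\infty}\in\tilde\Omega$ and $\tilde\iota^2=\{s^2_n\}_{n=1}^{\infty}\in\tilde\Omega$, and recall that each functional in $\tilde\Lambda$ has the form $\tilde f_m^{\mathrm{vec}}$ or $\tilde f_m^{\mathrm{mat}}$ for some $m\in\nat$, whose value on a sequence $\{s_n\}_{n=1}^{\infty}$ depends only on the $m$-th entry $s_m$ --- namely $\tilde f_m^{\mathrm{vec}}$ and $\tilde f_m^{\mathrm{mat}}$ return, respectively, the vector part and the matrix part of $s_m$.

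First I would use the precision hypothesis: since $\Gamma$ has precision $2^{-k}$, Definition~\ref{def:no-of-input-bits} gives $D_{\Gamma}(\tilde\iota^1)\le k$, so every $f\in\Lambda_{\Gamma}(\tilde\iota^1)$ is of the form $\tilde f_m^{\mathrm{vec}}$ or $\tilde f_m^{\mathrm{mat}}$ with $m\le k$. By the standing hypothesis $s^1_m=s^2_m$ for all $m\le k$, and hence, reading off the relevant component of $s^1_m=s^2_m$, we get $f(\tilde\iota^1)=f(\tilde\iota^2)$ for every $f\in\Lambda_{\Gamma}(\tilde\iota^1)$. Next I would invoke property~\ref{property:AlgorithmSameInputSameInputTaken} of Definition~\ref{definition:Algorithm} with $\tilde\iota=\tilde\iota^1$ and the second sequence $\tilde\iota^2$ playing the role of $\iota'$: because $f(\tilde\iota^2)=f(\tilde\iota^1)$ for all $f\in\Lambda_{\Gamma}(\tilde\iota^1)$, this yields $\Lambda_{\Gamma}(\tilde\iota^2)=\Lambda_{\Gamma}(\tilde\iota^1)$.

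Finally, property~\ref{property:AlgorithmDependenceOnInput} says the action of $\Gamma$ on $\tilde\iota^1$ is uniquely determined by $\{f(\tilde\iota^1)\}_{f\in\Lambda_{\Gamma}(\tilde\iota^1)}$, and likewise for $\tilde\iota^2$ by $\{f(\tilde\iota^2)\}_{f\in\Lambda_{\Gamma}(\tilde\iota^2)}$. Since the index sets agree, $\Lambda_{\Gamma}(\tilde\iota^1)=\Lambda_{\Gamma}(\tilde\iota^2)$, and the evaluations agree on them, $f(\tilde\iota^1)=f(\tilde\iota^2)$, the two data determining the action of $\Gamma$ coincide, so $\Gamma(\tilde\iota^1)=\Gamma(\tilde\iota^2)$, as claimed.

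The hard part will not be any estimate --- it is only the bookkeeping: one must correctly identify how the evaluation functionals $\tilde f_m^{\mathrm{vec}},\tilde f_m^{\mathrm{mat}}$ act on sequences in $\tilde\Omega$, confirm that $D_{\Gamma}(\tilde\iota^1)\le k$ genuinely forces $\Lambda_{\Gamma}(\tilde\iota^1)$ to involve only indices $m\le k$ (so that agreement of the first $k$ sequence entries is exactly what is needed), and be careful that property~\ref{property:AlgorithmSameInputSameInputTaken} is applied with $\tilde\iota^1$ as the base point --- a single application in that direction suffices to transfer both $\Lambda_{\Gamma}$ and the evaluated values over to $\tilde\iota^2$.
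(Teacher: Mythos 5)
Your argument is correct and follows essentially the same route as the paper's proof: bound the indices of the functionals in $\Lambda_\Gamma(\tilde\iota^1)$ by $k$ via the precision hypothesis, deduce agreement of the evaluations from $s^1_i=s^2_i$ for $i\le k$, then apply property (iii) of Definition \ref{definition:Algorithm} to equate the two consultation sets and property (ii) to equate the outputs. Your write-up is in fact slightly more careful than the paper's in spelling out why the evaluations agree.
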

\begin{proof}
	By the definition of a finite precision algorithm, the set $D_{\Gamma}(\{s^1_n\}_{n=1}^{\infty})$ has cardinality at most $k$. In particular, $f(\{s^1_n\}_{n=1}^{\infty}) = f(\{s^2_n\}_{n=1}^{\infty})$ for all $f \in \Lambda_\Gamma(\{s^1_n\}_{n=1}^{\infty})$ and thus $\Lambda_\Gamma(\{s^1_n\}_{n=1}^{\infty}) = \Lambda_\Gamma(\{s^2_n\}_{n=1}^{\infty})$ by Definition \ref{definition:Algorithm} (iii). We conclude that  $\Gamma(\{s^1_n\}_{n=1}^{\infty}) = \Gamma(\{s^2_n\}_{n=1}^{\infty})$ by by Definition \ref{definition:Algorithm} (ii).
\end{proof}

\begin{proof}[Proof of Theorem \ref{thm:failure_bounds}]
	For shorthand, we write $B = \mathcal{B}_{\infty}(b,U,r)$. Since $\cond{b,U} = \alpha$, we have $\stabsupp(b,U) = \alpha^{-1}$. In particular, $\Xi(b,U) = \{S_1\}$ for some set $S_1 \subseteq \{1,2,\dotsc N\}$, otherwise, this would contradict Definition \ref{def:stabSupp}. Furthermore there exists a point $(\hat y,\hat A)$ with $\cond{\hat y, \hat A} = \infty$ and  $(\hat y, \hat A) \in \overline{\mathcal{B}}_{\infty}(b,U,\alpha^{-1}) \subset  B$ since $r > \alpha^{-1}$.
	
	We can thus apply Lemma \ref{lem:existsOpenFail} to the open set $B$ and the support set $S_1$ to obtain a point $(y^P,A^P)$ and an $\epsilon > 0$ so that $\mathcal{B}_{\infty}(y^P,A^P,\epsilon) \subseteq  B$ and if $(y,A) \in \mathcal{B}_{\infty}(y^P,A^P,\epsilon)$ then $S_1 \notin \Xi(y,A) = \{S_2\}$.
	
	We next define three sets
	\begin{gather*}
		F_1:= \{ (y,A) \in B\, \vert \, \Xi(y,A) = \{S_1\} \},\quad F_2:= \{ (y,A) \in B \, \vert \, \Xi(y,A) = \{S_2\} \},\\
		F_3:= \{ (y,A) \in B \, \vert \, |\Xi(y,A)| = 1, \, \Xi(y,A) \neq \{S_1\}, \, \Xi(y,A) \neq \{S_2\} \}.
	\end{gather*}
	and note that the above argument shows that both $F_1$ and $F_2$ each contain an open ball. We now proceed to argue separately for $\Gamma_1$ and $\Gamma_2$
	
	\textit{The result for $\Gamma_1$:}
	We will first construct $\Delta_1$ information for $F:= F_1 \cup F_2 \cup F_3$ so that $\Gamma_1$ fails on this $\Delta_1$ information for $F$. Note that $F$ is the set of points $(y,A)$ in $B$ so that $|\Xi(y,A)| = 1$. We have already noted that $\stabsupp(b,U) = \alpha^{-1}$ implies that $\mathcal{B}_{\infty}(b,U,\alpha^{-1}) \in F_1$. Furthermore, by \cite[Lemma 4]{LASSOUNIQUE}, $B \setminus F$ has measure $0$. so this will suffice to show that $\Gamma_1$ fails on $F$.
	
	We thus define the $\Delta_1$ information requireed. Since $F_1$ and $F_2$ each contain an open set they each must contain at least one point in $\dyadic^m \times \dyadic^{m \times N}$, say, $\dyadvmi{1}$ and $\dyadvmi{2}$ respectively.
	
	Take arbitrary $\hat f_n^{\mathrm{vec}}: B \to \dyadic^m$ and $\hat f_n^{\mathrm{mat}} : B \to \dyadic^{m \times N}$ so that for all $(y,A) \in B$ we have
	\begin{equation*}
		\|\hat f_{n}^{\mathrm{vec}}(y,A) - y\|_{\infty} \leq 2^{-n}, \quad  \nmax{\hat f_{n}^{\mathrm{mat}}(y,A)-A} \leq 2^{-n}
	\end{equation*}
	and $\hat f_{n}^{\mathrm{vec}}(\dyadvmi{i}) = \dyadvi{i} \quad  \hat f_{n}^{\mathrm{mat}}(\dyadvmi{i}) = \dyadmi{i}$ for $i \in \{1,2\}$.
	
	We now define the following $\Delta_1$ information:
	\begin{equation*}
		f_n^{\mathrm{vec}}(\iota) = \begin{cases} 
			\dyadvi{1} & \text{ if } n \leq k\\
			\hat f_n^{\mathrm{vec}}(\iota) & \text{ if } n > k
		\end{cases},\quad f_n^{\mathrm{mat}}(\iota) = \begin{cases} 
			\dyadmi{1} & \text{ if } n \leq k\\
			\hat f_n^{\mathrm{mat}}(\iota) & \text{ if } n > k
		\end{cases} \quad \text{ for } \iota \in F_2 \cup F_3
	\end{equation*}
	and 
	\begin{equation*}
		f_n^{\mathrm{vec}}(\iota) = \begin{cases} 
			\dyadvi{2} & \text{ if } n \leq k\\
			\hat f_n^{\mathrm{vec}}(\iota) & \text{ if } n > k
		\end{cases},\quad f_n^{\mathrm{mat}}(\iota) = \begin{cases} 
			\dyadmi{2} & \text{ if } n \leq k\\
			\hat f_n^{\mathrm{mat}}(\iota) & \text{ if } n > k
		\end{cases} \quad \text{ for } \iota \in F_1
	\end{equation*}
	and $f_n(\iota) = \hat f_n(\iota)$ whenever $\iota \in B \setminus F$.
	
	This defines $\Delta_1$ information for $(y,A) = \iota \in B$: this is clear for $\iota \in B \setminus F$. For $\iota \in F_2 \cup F_3$ this holds because $\|\dyadvi{1} - y\|_{\infty} \leq \|\dyadvi{1} - b\|_{\infty} + \|b - y\|_{\infty} \leq2^{-r+1} \leq 2^{-k} \leq 2^{-n}$ and $\|\dyadmi{1} - A\|_{\max} \leq \nmax{\dyadmi{1} - U} + \|U-A\|_{\infty} \leq 2^{-r+1} \leq 2^{-k}\leq 2^{-n}$  for $n \leq k$ and similarly for $\iota \in F_1$ we have $\|\dyadvi{2} - y\|_{\infty}, \|\dyadmi{2} - A\|_{\infty} \leq 2^{-n}$. Slightly abusing notation, we denote $\Gamma(\iota) = \Gamma(\{f_n^{\mathrm{vec}}(\iota),f_{n}^{\mathrm{mat}}(\iota)\}_{n=1}^{\infty})$.

	With this $\Delta_1$ information, we now use the precondition that $\Gamma$ is correct on all inputs that can be represented exactly in $\Omega$ to see that if $\{g^i_n\}_{n=1}^{\infty}$ is the constant sequence given by $g^i_n= (\dyadvi{i},\dyadmi{i})$ then $\Gamma(\{g^i_n\}_{n=1}^{\infty}) = S_i$. We now apply Lemma \ref{lem:FinitePrecisionSameInputOutput} to obtain $\Gamma(\iota) =  S_1$ when $\iota \in F_2 \cup F_3$ and $\Gamma(\iota) = S_2$ when $\iota \in F_1$. By the definitions of $F_1,F_2, F_3$ and $F$ this proves that $\Gamma(\iota) \notin \Xi(\iota)$ for all $\iota \in F$.

	\textit{The result for $\Gamma_2$:}
	As before, there exists a point $\dyadvmi{1} \in B$ with $\Xi\dyadvmi{1} = S_1 = \Xi(b,U)$.

	Take arbitrary $\hat f_n^{\mathrm{vec}}: B \to \dyadic^m$ and $\hat f_n^{\mathrm{mat}} : B \to \dyadic^{m \times N}$ so that for all $(y,A) \in B$ we have
	\begin{equation*}
		\|\hat f_{n}^{\mathrm{vec}}(y,A) - y\|_{\infty} \leq 2^{-n}, \quad  \nmax{\hat f_{n}^{\mathrm{mat}}(y,A)-A} \leq 2^{-n}
	\end{equation*}
	
	We now define the following $\Delta_1$ information:
	\begin{equation*}
		f_n^{\mathrm{vec}}(\iota) = \begin{cases} 
			\dyadvi{1}& \text{ if } n \leq k\\
			\hat f_n^{\mathrm{vec}}(\iota) & \text{ if } n > k
		\end{cases},\quad f_n^{\mathrm{mat}}(\iota) = \begin{cases} 
			\dyadmi{1} & \text{ if } n \leq k\\
			\hat f_n^{\mathrm{mat}}(\iota) & \text{ if } n > k
		\end{cases}
	\end{equation*}
	The same arguments as in the previous part show that this provides $\Delta_1$ information for every $\iota \in B$. We again slightly abuse notation by setting $\Gamma(\iota) = \Gamma(\{f_n^{\mathrm{vec}}(\iota),f_{n}^{\mathrm{mat}}(\iota)\}_{n=1}^{\infty})$.
	
	By Lemma \ref{lem:FinitePrecisionSameInputOutput}, if we define $\{g^i_n\}_{n=1}^{\infty}$ as before
	we must have that $\Gamma_2(\iota) = \Gamma_2(\{g^1_n\}_{n=1}^{\infty})$ for all $\iota \in B$. Now if $\Gamma_2(\{g^1_n\}_{n=1}^{\infty}) = S_1$ then $\Gamma_2(\iota) \neq \Xi(\iota)$ for all $\iota \in F_2 \cup F_3$. Similarly, if $\Gamma_2(\{g^1_n\}_{n=1}^{\infty}) \neq S_1$ then $\Gamma_2(\iota) \neq \Xi(\iota)$ for all $\iota \in F_1$. Either way, since both $F_1$ and $F_2$ contain an open set, we have shown that $\Gamma_2$ fails on an open set.
\end{proof}

\bibliographystyle{abbrv}
\bibliography{condReferences}

\begin{thebibliography}{10}

\bibitem{AbSt:64}
M.~Abramowitz and I.~Stegun, editors.
\newblock {\em Handbook of Mathematical Functions with Formulas, Graphs, and
  Mathematical Tables}. National Bureau of Standards, 1964.

\bibitem{CSBook}
B.~Adcock and A.~C. Hansen.
\newblock {\em Compressive Imaging: Structure, Sampling, Learning}.
\newblock Cambridge University Press, 2021.

\bibitem{Lotz2014}
D.~Amelunzen, M.~Lotz, M.~B. McCoy, and J.~A. Tropp.
\newblock Living on the edge : phase transitions in convex programs with random
  data.
\newblock {\em Information and Inference}, 3(3):224--294, June 2014.

\bibitem{paradox22}
V.~Antun, M.~J. Colbrook, and A.~C. Hansen.
\newblock Proving existence is not enough: Mathematical paradoxes unravel the
  limits of neural networks in artificial intelligence.
\newblock {\em SIAM News}, 55(04):1--4, May 2022.

\bibitem{antun2020instabilities}
V.~Antun, F.~Renna, C.~Poon, B.~Adcock, and A.~C. Hansen.
\newblock On instabilities of deep learning in image reconstruction and the
  potential costs of {AI}.
\newblock {\em Proc. Natl. Acad. Sci. USA}, 117(48):30088--30095, 2020.

\bibitem{Arora2007}
S.~Arora and B.~Barak.
\newblock {\em Computational Complexity - A Modern Approach}.
\newblock Princeton University Press, 2009.

\bibitem{BCH1}
A.~Bastounis, F.~Cucker, and A.~C. Hansen.
\newblock When can you trust feature selection? -- i: {A} condition-based
  analysis of lasso and generalised hardness of approximation.
\newblock {\em Preprint}, 2023.

\bibitem{opt_big}
A.~Bastounis, A.~C. Hansen, and V.~{Vla\v{c}i\'{c}}.
\newblock The extended {S}male's 9th problem -- {O}n computational barriers and
  paradoxes in estimation, regularisation, computer-assisted proofs and
  learning.
\newblock {\em arXiv:2110.15734}, 2021.

\bibitem{Fista}
A.~Beck and M.~Teboulle.
\newblock A fast iterative shrinkage-thresholding algorithm for linear inverse
  problems.
\newblock {\em SIAM Journal on Imaging Sciences}, 2(1):183--202, 2009.

\bibitem{SCI}
J.~Ben-Artzi, M.~J. Colbrook, A.~C. Hansen, O.~Nevanlinna, and M.~Seidel.
\newblock Computing spectra -- {O}n the solvability complexity index hierarchy
  and towers of algorithms.
\newblock {\em arXiv:1508.03280v5}, 2020.

\bibitem{Ben_Artzi2022}
J.~Ben-Artzi, M.~Marletta, and F.~R\"osler.
\newblock Computing scattering resonances.
\newblock {\em Journal of the European Mathematical Society}, 2023.

\bibitem{Nemirovski_robust}
A.~Ben-Tal, L.~El~Ghaoui, and A.~Nemirovski.
\newblock {\em Robust Optimization}.
\newblock Princeton Series in Applied Mathematics. Princeton University Press,
  October 2009.

\bibitem{Nemirovski_robust2}
A.~Ben-Tal and A.~Nemirovski.
\newblock Robust solutions of linear programming problems contaminated with
  uncertain data.
\newblock {\em Mathematical Programming}, 88(3):411--424, 2000.

\bibitem{Boyd}
S.~Boyd and L.~Vandenberghe.
\newblock {\em Convex Optimization}.
\newblock Cambridge University Press, New York, NY, USA, 2004.

\bibitem{Condition}
P.~B\"urgisser and F.~Cucker.
\newblock {\em Condition: The Geometry of Numerical Algorithms}.
\newblock Number 349 in Grundlehren der matematischen Wissenschaften. Springer
  Verlag, 2013.

\bibitem{chambolle_pock_2016}
A.~Chambolle and T.~Pock.
\newblock An introduction to continuous optimization for imaging.
\newblock {\em Acta Numerica}, 25:161--319, 2016.

\bibitem{felipe_cond_01}
D.~Cheung and F.~Cucker.
\newblock A new condition number for linear programming.
\newblock {\em Mathematical Programming}, 91(1):163--174, 2001.

\bibitem{ChC02}
D.~Cheung and F.~Cucker.
\newblock Solving linear programs with finite precision: {I}. {C}ondition
  numbers and random programs.
\newblock {\em Math. Programming}, 99:175--196, 2004.

\bibitem{ChCP07}
D.~Cheung, F.~Cucker, and J.~Pe\~na.
\newblock On strata of degenerate polyhedral cones. {I}: Condition and distance
  to stratae.
\newblock {\em European Journal of Operational Research}, 198:23--28, 2009.

\bibitem{Choi}
C.~Choi.
\newblock 7 revealing ways {AI}s fail.
\newblock {\em IEEE Spectrum}, September, 2021.

\bibitem{Choi2}
C.~Choi.
\newblock Some {AI} systems may be impossible to compute.
\newblock {\em IEEE Spectrum}, March, 2022.

\bibitem{Matt1}
M.~Colbrook and A.~C. Hansen.
\newblock The foundations of spectral computations via the solvability
  complexity index hierarchy.
\newblock {\em Journal of the European Mathematical Society}, 2022 (online).

\bibitem{comp_stable_NN22}
M.~J. Colbrook, V.~Antun, and A.~C. Hansen.
\newblock The difficulty of computing stable and accurate neural networks: On
  the barriers of deep learning and smale's 18th problem.
\newblock {\em Proc.\ Natl.\ Acad.\ Sci.\ USA}, 119(12):e2107151119, 2022.

\bibitem{Cucker_Smale97}
F.~Cucker and S.~Smale.
\newblock Complexity estimates depending on condition and round-off error.
\newblock {\em Journal of the ACM}, 46(1):113--184, 1999.

\bibitem{DaNa:03}
H.~David and H.~Nagaraja.
\newblock {\em Order statistics}.
\newblock Wiley Series in Probability and Statistics. Wiley-Interscience [John
  Wiley \& Sons], Hoboken, NJ, third edition, 2003.

\bibitem{AIM}
C.~Fefferman, A.~C. Hansen, and S.~Jitomirskaya, editors.
\newblock {\em Computational mathematics in computer assisted proofs}, American
  Institute of Mathematics Workshops. American Institute of Mathematics, 2022.
\newblock Available online at \\
  \url{https://aimath.org/pastworkshops/compproofsvrep.pdf}.

\bibitem{gazdag2022generalised}
L.~E. Gazdag and A.~C. Hansen.
\newblock Generalised hardness of approximation and the {SCI} hierarchy -- {On}
  determining the boundaries of training algorithms in {AI}.
\newblock {\em arXiv:2209.06715}, 2022.

\bibitem{Anders2}
N.~M. Gottschling, V.~Antun, A.~C. Hansen, and B.~Adcock.
\newblock The troublesome kernel -- on hallucinations, no free lunches and the
  accuracy-stability trade-off in inverse problems.
\newblock 2023.

\bibitem{Hales1}
T.~C. Hales.
\newblock A proof of the {K}epler conjecture.
\newblock {\em Ann. of Math. (2)}, 162(3):1065--1185, 2005.

\bibitem{Hales2}
T.~C. Hales and et~al.
\newblock A formal proof of the kepler conjecture.
\newblock {\em Forum of Mathematics, Pi}, 5:e2, 2017.

\bibitem{hammernik2018learning}
K.~Hammernik, T.~Klatzer, E.~Kobler, M.~P. Recht, D.~K. Sodickson, T.~Pock, and
  F.~Knoll.
\newblock Learning a variational network for reconstruction of accelerated
  {MRI} data.
\newblock {\em Magnetic Resonance in Medicine}, 79(6):3055--3071, 2018.

\bibitem{Hansen_JAMS}
A.~C. Hansen.
\newblock On the solvability complexity index, the {$n$}-pseudospectrum and
  approximations of spectra of operators.
\newblock {\em Journal of the American Mathematical Society}, 24(1):81--124,
  2011.

\bibitem{SLSBook}
T.~Hastie, R.~Tibshirani, and M.~Wainwright.
\newblock {\em {Statistical Learning with Sparsity: The Lasso and
  Generalizations (Chapman \& Hall/CRC Monographs on Statistics \& Applied
  Probability)}}.
\newblock Chapman and Hall/CRC, May 2015.

\bibitem{heaven2019deep}
D.~Heaven et~al.
\newblock Why deep-learning {AIs} are so easy to fool.
\newblock {\em Nature}, 574(7777):163--166, 2019.

\bibitem{Higham96}
N.~J. Higham.
\newblock {\em Accuracy and Stability of Numerical Algorithms}.
\newblock Society for Industrial and Applied Mathematics, Philadelphia, PA,
  USA, 2nd edition, 2002.

\bibitem{jin17}
K.~H. Jin, M.~T. McCann, E.~Froustey, and M.~Unser.
\newblock Deep convolutional neural network for inverse problems in imaging.
\newblock {\em IEEE Transactions on Image Processing}, 26(9):4509--4522, 2017.

\bibitem{Lotz2020}
M.~Lotz, D.~Amelunxen, and J.~Walvin.
\newblock Effective condition number bounds for convex regularization.
\newblock {\em IEEE Transactions on Information Theory}, Jan. 2020.

\bibitem{mccann2017convolutional}
M.~T. McCann, K.~H. Jin, and M.~Unser.
\newblock Convolutional neural networks for inverse problems in imaging: {A}
  review.
\newblock {\em IEEE Signal Process Magazine}, 34(6):85--95, 2017.

\bibitem{DezFaFr-17}
S.~Moosavi-Dezfooli, A.~Fawzi, O.~Fawzi, and P.~Frossard.
\newblock Universal adversarial perturbations.
\newblock In {\em IEEE Conference on computer vision and pattern recognition},
  pages 86--94, July 2017.

\bibitem{NemirovskiLRob}
A.~Nemirovski.
\newblock { Lectures on Robust Convex Optimization}.
\newblock Available online at \url{https://www2.isye.gatech.edu/~nemirovs/},
  2009.

\bibitem{Nesterov_Nemirovski_Acta}
Y.~E. Nesterov and A.~Nemirovski.
\newblock On first-order algorithms for l1/nuclear norm minimization.
\newblock {\em Acta Numer.}, 22:509--575, 2013.

\bibitem{Pena2001}
J.~Pe{\~n}a.
\newblock Conditioning of convex programs from a primal-dual perspective.
\newblock {\em Mathematics of Operations Research}, 26(2):206--220, 2001.

\bibitem{Pena2002}
J.~Pe{\~n}a.
\newblock Two properties of condition numbers for convex programs via
  implicitly defined barrier functions.
\newblock {\em Mathematical Programming}, 93(1):55--75, 2002.

\bibitem{EU_Commission_2020}
H.~R, J.~H, and S.~M. JI.
\newblock Robustness and explainability of artificial intelligence.
\newblock (KJ-NA-30040-EN-N (online)), 2020.

\bibitem{renegar1988polynomial}
J.~Renegar.
\newblock A polynomial-time algorithm, based on newton's method, for linear
  programming.
\newblock {\em Mathematical Programming}, 40(1-3):59--93, 1988.

\bibitem{Renegar2}
J.~Renegar.
\newblock Incorporating condition measures into the complexity theory of linear
  programming.
\newblock {\em SIAM Journal on Optimization}, 5(3):506--524, 1995.

\bibitem{Renegar1}
J.~Renegar.
\newblock Linear programming, complexity theory and elementary functional
  analysis.
\newblock {\em Mathematical Programming}, 70(1):279--351, 1995.

\bibitem{Renegar96}
J.~Renegar.
\newblock Condition numbers, the barrier method, and the conjugate-gradient
  method.
\newblock {\em SIAM Journal on Optimization}, 6:879--912, 1996.

\bibitem{renegar2001mathematical}
J.~Renegar.
\newblock {\em A mathematical view of interior-point methods in convex
  optimization}, volume~3.
\newblock Siam, 2001.

\bibitem{LassoStart}
R.~Tibshirani.
\newblock Regression shrinkage and selection via the lasso.
\newblock {\em Journal of the Royal Statistical Society, Series B},
  58:267--288, 1994.

\bibitem{LASSOUNIQUE}
R.~J. Tibshirani.
\newblock The lasso problem and uniqueness.
\newblock {\em Electron. J. Statist.}, 7:1456--1490, 2013.

\bibitem{Wainwright:09}
M.~J. Wainwright.
\newblock Sharp thresholds for high-dimensional and noisy sparsity recovery
  using {$\ell_1$}-constrained quadratic programming ({L}asso).
\newblock {\em IEEE Trans. Inform. Theory}, 55(5):2183--2202, 2009.

\bibitem{Wilkinson63}
J.~Wilkinson.
\newblock {\em Rounding Errors in Algebraic Processes}.
\newblock Prentice Hall, 1963.

\bibitem{wright2022optimization}
S.~Wright and B.~Recht.
\newblock {\em Optimization for Data Analysis}.
\newblock Cambridge University Press, 2022.

\bibitem{Mario_Lasso}
S.~J. Wright, R.~D. Nowak, and M.~A.~T. Figueiredo.
\newblock Sparse reconstruction by separable approximation.
\newblock {\em IEEE Transactions on Signal Processing}, 57(7):2479--2493, 2009.

\end{thebibliography}

\end{document}